\theoremstyle{plain}
\newtheorem{prop}{Proposition}[section]
\newtheorem{lemma}[prop]{Lemma}
\newtheorem{thm}[prop]{Theorem}
\newtheorem{cor}[prop]{Corollary}
\theoremstyle{remark}
\newtheorem{rmk}[prop]{Remark}
\theoremstyle{definition}
\newtheorem{defn}[prop]{Definition}
\newtheorem{ex}[prop]{Example}
\renewcommand{\AA}{\mathbb{A}}
\newcommand{\CC}{\mathbb{C}}
\newcommand{\FF}{\mathbb{F}}
\newcommand{\PP}{\mathbb{P}}
\newcommand{\QQ}{\mathbb{Q}}
\newcommand{\RR}{\mathbb{R}}
\newcommand{\ZZ}{\mathbb{Z}}
\newcommand{\defined}{\mathrel{\mathop:}=}
\newcommand{\Sp}{\operatorname{Sp}}
\newcommand{\MGL}{\mathsf{MGL}}
\newcommand{\BP}{\mathsf{BP}}
\newcommand{\BPn}[1]{\mathsf{BP}\langle {#1} \rangle}
\newcommand{\kgl}{\mathsf{kgl}}
\newcommand{\KGL}{\mathsf{KGL}}
\newcommand{\SH}{\mathsf{SH}}
\newcommand{\ku}{\mathsf{ku}}
\newcommand{\ko}{\mathsf{ko}}
\newcommand{\EEE}{\mathsf{E}}
\newcommand{\M}{\mathsf{M}}
\newcommand{\HHH}{\mathsf{H}}
\newcommand{\one}{\mathbf{1}}
\newcommand{\MU}{\mathsf{MU}}
\newcommand{\x}{\mathsf{x}}
\newcommand{\y}{\mathsf{y}}
\newcommand{\cd}{\text{cd}}
\newcommand{\smsh}{\wedge}
\newcommand{\Ext}{\operatorname{Ext}}
\newcommand{\cotensor}{\mathbin{\Box}}
\newcommand{\Spec}{\operatorname{Spec}}
\begin{document}

\title{Motivic Brown-Peterson invariants of the rationals}
\author{Kyle M.~Ormsby}
\address{Department of Mathematics, MIT, USA}
\email{ormsby@math.mit.edu}

\author{Paul Arne {\O}stv{\ae}r}
\address{Department of Mathematics, University of Oslo, Norway}
\curraddr{Department of Mathematics, MIT, USA}
\email{paularne@math.uio.no}

\begin{abstract}
Let
  $\BPn{n}$, $0\le n\le \infty$, denote the family of motivic
  truncated Brown-Peterson spectra over $\mathbb{Q}$.  We employ a
  ``local-to-global" philosophy in order to compute the bigraded homotopy groups of
  $\BPn{n}$.  Along the way, we produce a computation of the homotopy
  groups of $\BPn{n}$ over $\QQ_2$, prove a motivic Hasse principle for
  the spectra $\BPn{n}$, and reprove several classical and recent
  theorems about the $K$-theory of particular fields in a streamlined fashion.  We also compute the bigraded homotopy groups of the 2-complete algebraic cobordism spectrum $\MGL$ over $\QQ$.
\end{abstract}

\maketitle

\tableofcontents

\section{Introduction}\label{sec:intro}
In \cite{MorelVoevodsky}, Morel and Voevodsky initated a new approach
to studying arithmetic questions by homotopy theoretic means. This involved
defining, for any field $k$, an entire homotopy theory called ``motivic homotopy
theory over $k$." As part of the apparatus
of homotopy theory, one is entitled to a category of objects stabilized by
inverting the operation of smashing with a fixed object (such as $S^1$, in the
classical case). In the motivic case, the most arithmetically interesting
choice, of several available, is to localize by inverting smashing with the
projective line $\PP^1$.  The outcome of this procedure is Voevodsky's
stable motivic homotopy category, $\SH(k)$ \cite{Voevodskystable}; we
call its objects motivic spectra.  Many important
arithmetic and algebro-geometric cohomology theories are represented
by motivic spectra, including motivic cohomology and motivic Steenrod
operations, algebraic $K$-theory, and algebraic cobordism.  Working in
$\SH(k)$ allows us to apply numerous techniques from computational
algebraic topology to these invariants.  Primary among these examples
is the resolution of the Milnor conjecture on Galois cohomology
\cite{Voevodskymod2}.  There has been a recent flurry of concrete computations in
stable motivic homotopy theory, led by the efforts of Hu-Kriz-Ormsby,
Dugger-Isaksen, and Hill \cite{HKO,Ormsby,DI,Hill}.

Each motivic spectrum $\EEE$ has bigraded homotopy groups
\[
 \pi_\star \EEE = \bigoplus_{m,n\in \ZZ}\pi_{m+n\alpha}\EEE
\]
where $\pi_{m+n\alpha}\EEE$ consists of stable homotopy classes of maps $S^{m+n\alpha}\to \EEE$.  Here $S^{m+n\alpha} = (S^1)^{\wedge m} \wedge (\AA^1\smallsetminus 0)^{\wedge n}$.

For judicious choices of $\EEE$, these bigraded groups carry important
information about the base field.  For instance, if $\EEE = \KGL$ is
the motivic algebraic $K$-theory spectrum, then $\pi_{m+0\alpha}\KGL =
K_m(k)$, the $m$-th algebraic $K$-theory group of $k$.  If $\EEE =
\MGL$ is the algebraic coboridsm spectrum, then the groups
$\pi_{*(1+\alpha)}\MGL = \bigoplus_{k\in \ZZ} \pi_{k(1+\alpha)}\MGL$
form a ring which corepresents one-dimensional commutative formal
groups laws (the Lazard ring).

Let $\BP = \BP_k$ denote the motivic Brown-Peterson spectrum at the
prime $2$ over a characterstic 0 base field $k$.  (\emph{N.B.} We will
often drop the subscript $k$ from our motivic spectrum notation if it
is clear that only one field is in play, but we will put the subcript
back in whenever the base field needs to be clarified.)  The spectrum $\BP$ was
constructed by Po Hu and Igor Kriz \cite{HuKriz} and Gabriele Vezzosi
\cite{Vezzosi} on the model of Dan Quillen's idempotent construction in
topology \cite{Quillen} and serves as the universal $2$-typical component of
$\MGL$.  
The motivic truncated Brown-Peterson spectra $\BPn{n} = \BPn{n}_k$, $0\le n\le
\infty$, comprise a tower of $\BP$-modules
\begin{equation*}
\BP = \BPn{\infty}
\rightarrow
\cdots
\rightarrow
\BPn{n}
\rightarrow
\BPn{n-1}
\rightarrow
\cdots
\rightarrow
\BPn{0}.
\end{equation*}
Here,
$\BPn{0}=\M\ZZ_{(2)}$ is the $2$-local motivic Eilenberg-MacLane
spectrum by a theorem of Hopkins and Morel, 
while $\BPn{1}$ is the $2$-local connective 
$K$-theory spectrum and $\BP$ is the universal $2$-typical
algebraically oriented spectrum.  (See Lemma
\ref{lem:kgl} for the precise way in which $\BPn{1}$ is related to $\KGL$.) As such, for
$n>1$ we can view the groups $\pi_\star \BPn{n}$ as higher height generalizations of
the algebraic $K$-theory of the base field that are constructed from
algebraic cobordism.  

Our central result is a computation of the bigraded homotopy groups of
2-complete $\BPn{n}$ over the base field $\QQ$ via the motivic Adams 
spectral sequence.  The $n=1$ and $n=\infty$ case are of special interest.  In the $n=1$ case, we garner a new computation of the 2-complete algebraic $K$-theory of $\QQ$ originally arrived at by Rognes and Weibel via the Bloch-Lichtenbaum spectral sequence \cite{RW}.   Our computation of $\pi_\star\BP$
is a first step in a program for computing the motivic stable homotopy
groups of the sphere spectrum over $\QQ$ via the motivic Adams-Novikov
spectral sequence.

In topology, Milnor and Novikov showed independently that complex cobordism
splits as a wedge of suspensions of the Brown-Peterson spectrum.  Via
the motivic Adams spectral sequence, we show in Theorem
\ref{thm:wedge} that whenever the $2$-cohomological dimension of
$k(\sqrt{-1})$ is finite, $\MGL_k$ is the expected wedge of
suspensions of $\BP_k$'s.  (Our condition on $\operatorname{cd}_2k(\sqrt{-1})$
ensures that the motivic Adams spectral sequence converges.)  As such,
Theorem \ref{thm:BPnQHtpy} also produces a computation of the bigraded
coefficients of $\MGL_\QQ$.  This greatly extends the work of
Naumann-Spitzweck-{\O}stv{\ae}r \cite{NSO2} which computes
\[
  \pi_{m+0\alpha}\MGL\otimes \QQ =
  \begin{cases}
    \QQ &\text{if }m=0\text{ or }m>0\text{ and }4\mid m-1,\\
    0 &\text{otherwise}.
  \end{cases}
\]

For the topological $\BPn{n}$ the Adams spectral sequence collapses at the $E_{2}$-page, 
and its homotopy is 
\begin{equation*}
\pi_*\BPn{n}
=
\ZZ_{(2)}[v_{1},\cdots,v_{n}],
\vert v_{i}\vert = 2^{i+1}-2.
\end{equation*}
Over $\QQ$ the motivic Adams spectral sequence for $\BPn{n}$ does not collapse at some finite page.
We display an elaborate pattern of differentials governed by a Hasse principle in motivic homotopy theory,
which is reminiscent of the ``local-to-global" methods employed with
much success in class field theory and the study of quadratic forms.

Indeed, for each real or $p$-adic completion $\QQ_v$ of the rationals $\QQ$ there is a canonical map
\[
  \pi_\star\BPn{n}_{\QQ}\to \pi_\star\BPn{n}_{\QQ_v}
\]
to which a map of motivic Adams spectral sequences converges
(cf.~Proposition \ref{prop:HasseOnHtpy}).  In Theorem \ref{thm:Hasse}
we prove that the product of these maps is an injection.  The theorem
depends on a partial analysis of the motivic Adams spectral sequence
for $\BPn{n}_\QQ$ and then permits a full computation of the spectral
sequence (Theorems \ref{thm:BPnQ} and \ref{thm:BPnQHtpy}).

A more thorough outline of our method is as follows.  First, for each
real or $p$-adic completion of $\QQ$ and for
$\QQ$ itself, we run a Bockstein spectral sequence which converges to
the $E_2$-term of the motivic Adams spectral sequence (MASS) for
$\BPn{n}$.  (These Bockstein spectral sequences are based on filtering the
dual Steenrod algebra by powers of $\rho$, the class of $-1$ in Milnor
$K$-theory.)  We then run the MASS for each real or $p$-adic
completion.  (These computations are already known for $\RR$ and
$\QQ_p$, $p>2$, while our method in this paper is the first to produce
a $\BPn{n}$ computation over $\QQ_2$ for any $n>0$.)  Finally, all
these computations are combined and analyzed with respect to a
global-to-local map that allows us to compute the MASS for $\BPn{n}$
over $\QQ$. \footnote{An alternate title for this paper is \emph{How to compute
    the motivic Brown-Peterson homology of $\QQ$ in $1+(2+5\cdot
    \infty)\cdot \infty$ easy steps}.  We leave it as an exercise to
  the reader to derive this joke by computing the number of spectral
  sequence pages with nontrivial differentials used in our argument.}

Naturally, the results of these computations are quite complicated and
are hard to understand without a thorough familiarity with the spectral
sequences.  We have made a significant effort to present these
computations in as digestible a format as possible.  The reader can
find diagrams for the $\rho$-BSS and MASS for $\BPn{3}_\QQ$ in Figures
\ref{fig:A}, \ref{fig:B}, and \ref{fig:C}.  Enough of the patterns
present in such computations figure in the $n=3$ case that the
reader --- with enough diligence and patience --- should be able to
produce such diagrams for abitrary $n$.  The reader is warned that
these are not standard spectral sequence charts (homological degree is
suppressed and only certain $v_i$-multiplications are drawn
explicitly) but the graphical calculus is explained in Remarks
\ref{rmk:graph1} and \ref{rmk:graph2}.

A closed form for these groups (for arbitrary $n$) is presented in
Theorem \ref{thm:BPnQHtpy} but some qualitative remarks are in order
here; we focus on the $m+0\alpha$, $m\ge 2$, component of the bigraded homotopy
groups just to give a flavor of the answers (see Section
\ref{sec:MASS} for an explanation of our grading conventions).  First,
when $m$ is even these groups contain an infinitely generated direct
sum of cyclic 2-groups of unbounded order.  This summand
depends on $m$ but is 
independent of $n$.  Depending on $m$ and $n$, a certain finite number of
$\ZZ/2$ summands may also appear.  If $m\equiv 1\pod{4}$, then
$\pi_m\BPn{n}$ contains a $\ZZ_2$ summand, and this describes all of the
non-torsion; an $m,n$-dependent finite 
number of $\ZZ/2$ summands may also appear in these degrees.  Finally, if $m\equiv
3\pod{4}$ we find an $m,n$-dependent finite $2$-torsion group.

There is a tantalizing connection between the groups calculated in
Theorem \ref{thm:BPnQHtpy} and the standard localization sequence
\[
  \bigvee_p K\FF_p \to K\ZZ \to K\QQ
\]
in algebraic $K$-theory.  In fact, in the $n=1$ case our computation
naturally splits into components abstractly accounting for the contribution of
$\bigvee_p K\FF_p$ and $K\ZZ$ to $K\QQ$.  There is a similar abstract splitting for $n>1$ in which case there are no classical localization theorems for
$\BPn{n}$ (or associated spectra like motivic $\EEE(n)$).  This leads us
to speculate that the motivic spectra $\BPn{n}$, $0\le n\le \infty$,
should satisfy some sort of localization property (although there are
technical details that make the precise statement of such a conjecture
nontrivial).  We explore these ideas in Remark
\ref{rmk:loc}; they should provide the basis for continued research
on the $\BPn{n}$ spectra.

We now indicate the precise outline of our paper:

In Section \ref{sec:MASS} we review the motivic Adams spectral sequence (denoted by MASS), 
the construction of $\BPn{n}$, 
and the comodule structure of the motivic homology of $\BPn{n}$ over the dual Steenrod algebra.
We recall that the MASS converges for $\BPn{n}$ over fields of finite virtual mod-$2$ \'etale cohomological dimension,
a condition which holds for $\QQ$ and all of its completions.

In Section \ref{sec:comp} we review known MASS computations for $\BPn{n}$ over the real numbers and the 
$p$-adic rationals, $p>2$,
along with a number of applications. 
We then compute the groups $\pi_\star\BPn{n}_{\QQ_2}$.
The usefulness of these computations will be evident in the last part of the paper.

In Section \ref{sec:Hasse} we use base change functors to construct ``rational models'' for motivic spectra.  
We apply this to truncated Brown-Peterson spectra.
The unit of a base change adjunction allows us to construct the ``Hasse map'' that compares spectra defined 
over global and local number fields.

Finally, 
in Section \ref{sec:rational}, 
we combine the Hasse map with the $p$-adic and real MASS computations to prove the Hasse principle for $\BPn{n}$ over $\QQ$, 
and determine its coefficients.

\textbf{Acknowledgments.}  Both authors would like to thank Mike Hill
for input on this project during the summer of 2009; they would also
like to thank Haynes Miller
for helpful comments during the preparation of this manuscript.

The first author would like to thank Igor Kriz for initially suggesting that the local-to-global philosophy might be useful in motivic homotopy theory; he also acknowledges partial support from NSF award DMS-1103873.

The second author would like to thank the MIT Mathematics Department for
its hospitality and acknowledges partial support from the Leiv
Eriksson mobility programme and RCN ES479962.

Finally, both authors would like to thank the referee for timely and
constructive comments, and the editors for helpful improvements to the
exposition.

\section{MASS for $\BPn{n}$}\label{sec:MASS}

\subsection{MASS}

For $\EEE$ a motivic spectrum let $\pi_\star \EEE  = \EEE_\star$
denote the bi-graded coefficients
\[
  \bigoplus_{m,n\in \ZZ} \pi_{m+n\alpha} \EEE = \bigoplus_{m,n\in \ZZ} [S^{m+n\alpha},\EEE]
\]
where $S^{m+n\alpha} = (S^1)^{\wedge m} \wedge (\AA^1\smallsetminus
0)^{\wedge n}$.

Let $\M\ZZ$ be the integral motivic Eilenberg-MacLane spectrum. 
Its mod-$2$ version $\M\ZZ/2$ is defined as the smash product of $\M\ZZ$ with the mod-$2$ motivic Moore spectrum $\one/2$.
The bigraded homotopy groups of $\M\ZZ/2 \wedge \M\ZZ/2$ in the motivic stable homotopy category over any field of characteristic zero 
identifies with the dual motivic Steenrod algebra \cite{Voevodskyreduced}.
\begin{prop}
Over fields of characteristic zero the dual Steenrod algebra $\mathcal{A}_{\star}$ at the prime $2$ is isomorphic to $\M\ZZ/2_{\star}\M\ZZ/2$.
\end{prop}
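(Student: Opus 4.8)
This is a theorem of Voevodsky \cite{Voevodskyreduced} (extended to more general base schemes by Hoyois--Kelly--{\O}stv{\ae}r), so a ``proof'' here amounts to recalling the shape of that argument. Write $\mathcal{A}$ for the motivic Steenrod algebra of bistable mod-$2$ cohomology operations, $\mathcal{A}_\star$ for the bigraded Hopf algebroid over $\M\ZZ/2_\star$ presented by Milnor-type generators $\xi_i$ ($i\ge 1$) and $\tau_i$ ($i\ge 0$) subject to Voevodsky's relations, and $\M\ZZ/2_\star\M\ZZ/2 \defined \pi_\star(\M\ZZ/2\smsh\M\ZZ/2)$. The plan has three steps: (i) give a presentation of $\mathcal{A}$ by generators and relations; (ii) show that $\mathcal{A}$ is free as a left $\M\ZZ/2_\star$-module on a locally finite set of generators (equivalently, that $\M\ZZ/2\smsh\M\ZZ/2$ has free homotopy over $\M\ZZ/2_\star$), so that $\mathcal{A}$ and $\M\ZZ/2_\star\M\ZZ/2$ are $\M\ZZ/2_\star$-linear duals of one another; and (iii) transport the Hopf-algebroid structure across this duality and match it with the given presentation of $\mathcal{A}_\star$.

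For step (i) one runs Steenrod's program internally to $\SH(k)$. Using the mod-$2$ motivic cohomology of the geometric classifying spaces $B\mu_2$ and $B\mu_2^{\times m}$ --- whose computation rests on the value of $\M\ZZ/2_\star(\Spec k)$, pinned down over a characteristic-zero field by the norm residue (Bloch--Kato) isomorphism --- Voevodsky constructs the total mod-$2$ power operation on $\M\ZZ/2$-cohomology, extracts the reduced power operations $P^i$ and the Bockstein $\beta$, and establishes the motivic Cartan formula and Adem relations. Comparing with the $\M\ZZ/2$-cohomology of suitable Thom spaces shows that the admissible monomials in the $P^i$ and $\beta$ form a left $\M\ZZ/2_\star$-basis of $\mathcal{A}$, with finitely many admissibles in each bidegree; this is step (ii), the same statement as the splitting of $\M\ZZ/2\smsh\M\ZZ/2$, as an $\M\ZZ/2$-module, into a wedge of bigraded suspensions of $\M\ZZ/2$.

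For step (iii) one dualizes the admissible basis: composition of operations makes the $\M\ZZ/2_\star$-linear dual of $\mathcal{A}$ into an algebra, the Cartan formula makes it a coalgebra, and organizing the dual generators as in Milnor's original treatment yields the classes $\xi_i$ and $\tau_i$ together with the two units $\M\ZZ/2_\star\rightrightarrows \M\ZZ/2_\star\M\ZZ/2$, the coproduct, counit, and antipode. Their formulas --- in particular the quadratic relations expressing each $\tau_i^2$ through $\tau$, $\rho=[-1]$, and lower $\xi$'s and $\tau$'s, and the fact that the right unit carries $\tau$ to $\tau+\rho\tau_0$ --- are computed exactly as in topology, and by inspection they agree with the defining presentation of $\mathcal{A}_\star$. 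One may if desired reduce the geometric input to the prime field: $\M\ZZ/2\smsh\M\ZZ/2$ is cellular and its formation commutes with base change along $\QQ\hookrightarrow k$, and its homotopy over $\QQ$ is $\M\ZZ/2_\star(\QQ)$-free, so $\M\ZZ/2_\star\M\ZZ/2\cong \M\ZZ/2_\star(k)\otimes_{\M\ZZ/2_\star(\QQ)}(\M\ZZ/2_\star\M\ZZ/2)_\QQ$, and the same base change carries the presentation of $\mathcal{A}_\star$ over $\QQ$ to the one over $k$.

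The only genuinely nontrivial input is geometric and lives in step (i): the computation of the mod-$2$ motivic cohomology of $B\mu_2$ and its finite powers, the Thom-isomorphism and instability arguments used to produce and interrelate the $P^i$ and $\beta$, and --- underlying everything --- the norm residue isomorphism fixing $\M\ZZ/2_\star$ of a characteristic-zero field. Granting these, the freeness in (ii) and the structure computation in (iii) are formal; the only novelty relative to the classical case is the bigraded bookkeeping forced by the motivic weight, the extra generators $\tau_i$, and the class $\rho$ --- the last of which is precisely why $\mathcal{A}_\star$ is a Hopf algebroid rather than a Hopf algebra.
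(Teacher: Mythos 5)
Your outline is correct, but it reconstructs Voevodsky's original argument from \cite{Voevodskyreduced} rather than the proof the paper actually points to. The paper cites Dugger--Isaksen \cite[Proposition 7.2]{DI}, whose route is: use the identification of Voevodsky's big category of motives with $\M\ZZ$-modules to import the structure theory of (split) proper Tate motives from \cite{VoevodskyEMspaces}, conclude that $\M\ZZ/2\smsh\M\ZZ/2$ splits as an $\M\ZZ/2$-module into a wedge of bigraded suspensions of $\M\ZZ/2$ indexed by the monomial basis, and then read off $\pi_\star(\M\ZZ/2\smsh\M\ZZ/2)$ together with its Hopf algebroid structure directly --- no dualization of an operation algebra is needed. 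Your version buys a self-contained Steenrod-style construction and makes the origin of the $\xi_i$, $\tau_i$, $\rho$, and the right unit transparent; the citation route buys brevity and sidesteps the one step you underplay, namely that ``comparing with Thom spaces'' only gives linear independence of the admissibles, while showing that they \emph{span} the algebra of all bistable operations (equivalently, that $\M\ZZ/2\smsh\M\ZZ/2$ is split proper Tate with no exotic summands) is the genuinely hard content and is exactly what the proper-Tate-motive technology of \cite{VoevodskyEMspaces} supplies. So the two arguments converge on the same nontrivial geometric input; your base-change reduction to the prime field at the end is consistent with how \cite{DI} organize their computation.
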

We refer to \cite[Proposition 7.2]{DI} for a short proof of this result, 
which is based on the identification of Voevodsky's big category of motives with $\M\ZZ$-modules 
\cite{ROCRASmodules}, \cite{ROAdvancesmodules}, 
and the description of proper Tate motives in \cite{VoevodskyEMspaces}.
For algebraically closed fields, 
an alternate proof is given in \cite[Theorem 4]{HKO}.
(The proofs in \cite{DI} and \cite{HKO} carry over verbatim to odd primes.)

In the rest of the paper we let $\M$ denote $\M\ZZ/2$.  Let $k^M_*$
denote the mod 2 Milnor $K$-theory of the base field and recall that
$M_\star \cong k^M_*[\tau]$ where $|k^M_n| = -n\alpha$ and $|\tau| =
1-\alpha$.

Next we recall the structure of the dual Steenrod algebra $\mathcal{A}_{\star}$ at $2$ as a Hopf algebroid over the ground ring $\M_\star$
\cite{Voevodskymod2}, \cite{Voevodskyreduced}.
Throughout we use the standard grading convention 
\[
\M_{\star}=\M^{-\star}.
\]
To begin with,
\[
\mathcal{A}_{\star}
=
(\M_{\star},
\M_{\star}[\xi_{1},\dots][\tau_{0},\dots]/(\tau_{i}^{2}-\rho(\tau_{i+1}-\tau_{0}\xi_{i+1})-\tau\xi_{i+1})).
\]
The left unit in the Hopf algebroid structure is the canonical inclusion, 
while the right unit is determined by 
\[
\eta_{R}(\rho)=\rho, \eta_{R}(\tau)=\tau+\rho\tau_{0},
\]
for the canonical classes $\tau\in\M_{1-\alpha}\cong\mu_{2}(k)$ and $\rho\in\M_{-\alpha}\cong k^{\times}/(k^{\times})^{2}$.
The mod $2$ Bockstein on $\tau$ equals $\rho$.
We note that $\M_{\star}\M$ is a commutative free $\M_{\star}$-algebra.
Moreover, 
the polynomial generators have bidegrees 
\[
\vert\xi_{i}\vert=(2^i-1)(1+\alpha),\, 
\vert\tau_{i}\vert=1+(2^i-1)(1+\alpha),
\] 
and coproducts given by 
\[
\Delta\xi_{i} =\sum_{j=0}^{i}\xi_{i-j}^{2^{j}}\otimes\xi_{j},\,
\Delta\tau_{i}=\tau_{i}\otimes 1+\sum_{j=0}^{i}\xi_{i-j}^{2^{j}}\otimes\tau_{j}.
\] 
These are the same formulae as in topology \cite{MilnorSteenrodAlg}, \cite[Theorem 3.1.1]{Ravenelbook}.
While $\tau$ is not primitive in general,
the graded mod-$2$ Milnor $K$-theory ring $k_{\ast}^{M}\subseteq\M_{\star}$
of the base field comprises primitive elements.

\begin{rmk}
Details on the odd-primary dual motivic Steenrod algebra 
in \cite{Voevodskyreduced} will not be recounted here since all of our
computations occur with $p=2$.
\end{rmk}

Suppose $\EEE$ is a motivic homotopy ring spectrum, 
i.e., a ring object in the motivic stable homotopy category.
The homotopy fiber sequence
\[
\overline{\M}
\rightarrow
\one
\rightarrow
\M
\]
gives rise to the Adams resolution
\begin{equation}
\label{adamstower}
\xymatrix{
\cdots\ar[r] & \EEE_{s+1} \ar[r] & \EEE_{s} \ar[r]\ar[d] & \EEE_{s-1} \ar[r] & \cdots \\
& & \M\smsh\overline{\M}^{\smsh (s)}\smsh\EEE }
\end{equation}
where 
\[
\EEE_{s}=\overline{\M}^{\smsh (s)}\smsh\EEE.
\]
The K{\"u}nneth isomorphism for motivic cohomology \cite[Proposition 7.5]{DI} and standard arguments,
cf.~\cite{DI}, \cite{HKO}, 
show that the homotopy spectral sequence associated to (\ref{adamstower}) is a conditionally convergent spectral sequence
\begin{equation}
\label{EASS}
E_{2}^{s,m+n\alpha}=
\Ext_{\mathcal{A}_{\star}}^{s,m+n\alpha}(\M_{\star},\M_{\star}\EEE)
\Longrightarrow
\pi_{m-s+n\alpha}\EEE\,\widehat{}.
\end{equation}
The target graded group is the motivic homotopy $\pi_{\star}\EEE\,\widehat{}$ of the nilpotent $\M$-completion of $\EEE$.  
This is a tri-graded spectral sequence, 
where $s$ is the homological degree of the Ext group (the Adams filtration), 
$m+n\alpha$ is the internal motivic bigrading coming from the bigrading on $\mathcal{A}_{\star}$ and $\M_{\star}$. 

The problem of strong convergence of (\ref{EASS}) is discussed in \cite{KO}.
Recall that $\EEE$ is of finite type if $\EEE_{m+n\alpha}=0$ for $m\ll 0$.
In all of the examples in this paper, 
the coefficient ring vanishes for $m<0$.
\begin{thm}
\label{thm:ASSconvergence}
Suppose $\EEE$ is cellular and of finite type and $\cd_{2}(k(\sqrt{-1}))<\infty$.
Then the $\M$ based Adams spectral sequence (\ref{EASS}) is strongly convergent
to the homotopy groups of the Bousfield localization of $\EEE$ at $\mathbf{1}/2$.
\end{thm}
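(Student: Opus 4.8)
The plan is to reduce the motivic convergence statement to the classical, well-understood situation by comparing the $\M$-based Adams tower with a tower whose convergence is controlled by a cohomological dimension bound. First I would recall that conditional convergence of the spectral sequence associated to the Adams tower (\ref{adamstower}) is automatic once $\EEE$ is cellular and the K\"unneth isomorphism for $\M$-cohomology holds, so the real content is \emph{strong} convergence, i.e.\ the vanishing of $\lim^1$ and $\lim$ terms of the tower $\{\EEE_s\}$ after localization at $\one/2$. Following \cite{KO}, the key is that the hypothesis $\cd_2(k(\sqrt{-1})) < \infty$ bounds the $\rho$-tower (equivalently, the \'etale cohomological dimension of $k$) and thereby forces the Adams tower to be ``pro-constant'' in each fixed bidegree beyond a finite stage. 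Concretely, one filters $\overline{\M}$ using that $\M$ is built from motivic cohomology, whose coefficients $k^M_*[\tau]$ have $k^M_* = k^M_*(k)$ controlled in degrees $> \cd_2 k$; combined with the finite-type hypothesis $\EEE_{m+n\alpha} = 0$ for $m \ll 0$, this shows that in each total degree only finitely many filtration stages contribute.

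The key steps, in order: (1) establish conditional convergence of (\ref{EASS}) from cellularity and the K\"unneth isomorphism \cite[Proposition 7.5]{DI}, so that $\pi_\star$ of the homotopy limit of the tower receives the abutment; (2) identify the homotopy limit: show $\operatorname{holim}_s \EEE_s$ is (the $\one/2$-localization of) the trivial spectrum in the relevant range, equivalently that the $\M$-nilpotent completion $\EEE^\wedge_\M$ agrees with $L_{\one/2}\EEE$ — here one uses that smashing with $\one/2$ kills the uniquely $2$-divisible part and that $\M$-homology detects $\one/2$-local equivalences among cellular finite-type spectra; (3) prove the $\lim^1$ term vanishes by a degreewise argument: fix $(m,n)$ and $s$, and show $\pi_{m+n\alpha}\EEE_s$ is eventually independent of $s$ (Mittag-Leffler), which reduces to a bound on how far up the $\rho$-filtration a class in a fixed bidegree can live — this is exactly where $\cd_2(k(\sqrt{-1})) < \infty$ enters, via the corresponding bound in \cite{KO}; (4) assemble these to conclude strong convergence to $\pi_\star L_{\one/2}\EEE$.

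The main obstacle I expect is step (3): controlling the $\lim^1$ term, because the motivic Adams tower is genuinely bigraded and the ``extra'' weight direction (the $\tau$ and $\rho$ classes in $\M_\star$) has no classical analogue. One must check that the $\rho$-Bockstein filtration on $\mathcal{A}_\star$ — which the paper later uses computationally for the $\rho$-BSS — truncates in a uniform way once $\cd_2 k(\sqrt{-1})$ is finite, so that $\Ext_{\mathcal{A}_\star}^{s,m+n\alpha}(\M_\star,\M_\star\EEE)$ vanishes for $s$ large relative to $(m,n)$. Granting the analysis of \cite{KO} (which is precisely the citation invoked for the convergence problem), this truncation is available and the remaining work is the standard bookkeeping converting a vanishing line into Mittag-Leffler plus a cofinality argument; I would cite \cite{KO} for the crucial bound and carry out only the identification of the abutment as $\pi_\star L_{\one/2}\EEE$ in detail, since that is the part genuinely specific to the statement as phrased here.
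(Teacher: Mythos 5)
The paper does not actually prove this theorem: it is imported verbatim from \cite{KO}, where the convergence problem for the motivic Adams spectral sequence is treated, so there is no in-paper argument to compare yours against. Your sketch is consistent with the strategy of that reference --- conditional convergence from the tower plus the K\"unneth isomorphism, a vanishing region on $E_2$ forced by the cohomological dimension hypothesis yielding Mittag--Leffler and hence the vanishing of $\lim^1$, and the identification of the $\M$-nilpotent completion with the Bousfield localization at $\one/2$ for cellular spectra of finite type --- and you correctly acknowledge that the crucial bound must be imported from \cite{KO} rather than re-derived.

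One substantive caution: your parenthetical asserting that the hypothesis $\cd_2(k(\sqrt{-1}))<\infty$ is ``equivalently, the \'etale cohomological dimension of $k$'' is false, and the distinction is exactly the point for this paper. The fields of interest here, $\RR$ and $\QQ$, have $\cd_2 = \infty$ because they are formally real: $\rho$ is non-nilpotent and $k^M_*(k)$ is nonzero in all degrees, so there is no uniform truncation of the $\rho$-filtration. The hypothesis is placed on $k(\sqrt{-1})$ precisely so that such fields are admitted; the infinite $\rho$-towers must then be handled by a separate argument (roughly, they contribute a controlled, $\rho$-periodic piece detected over the real closure) rather than by the degreewise truncation you describe in step~(3). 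If you intend to carry out step~(3) yourself rather than cite it, that is the part that would need repair.
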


\begin{ex}
The assumptions in Theorem \ref{thm:ASSconvergence} hold when $k=\QQ$, $\RR$ and $\QQ_{p}$ and $\EEE$ 
is one of the following motivic spectra.
\begin{itemize}
\item The sphere spectrum $\one$. 
It follows by results in \cite{Morelstableconnectivity} that $\pi_{m+n\alpha}\one=0$ for $m<0$, cf.~\cite{Morelintroduction}.
\item Algebraic cobordism $\MGL$.
We have $\MGL_{m+n\alpha}=0$ for $m<0$ and $\MGL_{-n\alpha}=K_{n}^{M}$ (Milnor $K$-groups of the base field) for $n\ge 0$ \cite{Voevodskystable}.
\item The algebraic Brown-Peterson spectrum $\BP$.
See \cite{HuKriz, Vezzosi} for constructions.  The coefficient ring vanishes for $m<0$ according to the previous example.
\item The motivic truncated Brown-Peterson spectra $\BPn{n}$.\footnote{In a few instances, $\BPn{n}$ will refer to the topological truncated $\BP$; this will always be clear from context.}  Again the coefficients vanish for $m<0$ by the previous example.
\end{itemize}
\end{ex}

\subsection{$\BPn{n}$}

We use the MASS to determine the coefficients of the truncated motivic Brown-Peterson spectra $\BPn{n}$.  
Here we review the definition and homology of $\BPn{n}$, 
and specify how we use the latter to compute the $E_2$-page of the MASS.  
We also recall the identifications of $\BPn{0}$ and $\BPn{1}$ in terms of familiar motivic spectra.

Let $\BP = \BPn{\infty}$ denote the motivic Brown-Peterson spectrum constructed from $2$-local algebraic 
cobordism $\MGL_{(2)}$ via the Quillen idempotent \cite{HuKriz,Vezzosi}.  
Inside $\MGL_{(2)\star}$ and $\BP_\star$ there are the usual classes $v_i$ of degree $(2^i-1)(1+\alpha)$.
The elements $v_{n+1},v_{n+2},\ldots$ comprise a regular sequence.
Following the script in topology, 
the motivic $\BPn{n}$ is the $\BP$-module formed by killing off the regular sequence
$v_{n+1},v_{n+2},\ldots$ in $\BP_\star$.

In order to understand the homology of $\BPn{n}$ as a comodule over
$\mathcal{A}_\star$ we introduce auxiliary Hopf algebroids
$\mathcal{E}(n)$ from \cite{Hill}.

\begin{defn}\label{defn:En}
Let $\mathcal{E}(n)$ denote the quotient Hopf algebroid
\[\begin{aligned}
  \mathcal{E}(n) &\defined (\M_\star,
  \mathcal{A}/(\xi_1,\xi_2,\ldots)+(\tau_{n+1},\tau_{n+2},\ldots))\\
  &= (\M_\star,\M_\star[\tau_0,\ldots,\tau_n]/(\tau_i^2-\rho\tau_{i+1},\tau_n^2)).
\end{aligned}\]
We permit $n=\infty$, in which case
\[\begin{aligned}
  \mathcal{E}(\infty) &\defined
  (\M_\star,\mathcal{A}/(\xi_1,\xi_2,\ldots))\\
  &= (\M_\star,\M_\star[\tau_0,\tau_1,\ldots]/(\tau_i^2 - \rho\tau_{i+1})).
\end{aligned}\]
\end{defn}

In \cite{Ormsby}, the first author determines the homology of $\BPn{n}$ as a comodule
over $\mathcal{A}_\star$.

\begin{thm}\label{thm:HBPn}
For $0\le n \le \infty$,
there is an isomorphism of Hopf algebroids
\[
  \M_\star \BPn{n} = \mathcal{A}\cotensor_{\mathcal{E}(n)} \M_\star.
\]
\end{thm}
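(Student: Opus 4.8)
The plan is to compute the $\mathcal{A}_\star$-comodule (in fact Hopf algebroid) structure on $\M_\star\BPn{n}$ by combining the known structure of $\M_\star\BP$ with the defining cofiber sequences, and then to recognize the answer as a cotensor product. First I would recall (from \cite{HuKriz,Vezzosi} and the motivic analog of Quillen's computation) that the Hopf algebroid $\M_\star\BP$ is the quotient of $\mathcal{A}_\star$ by the ideal generated by the $\tau_i$'s, i.e. $\M_\star\BP = \mathcal{A}/(\tau_0,\tau_1,\ldots) = \M_\star[\xi_1,\xi_2,\ldots]$, with the induced coproduct; equivalently $\M_\star\BP = \mathcal{A}\cotensor_{\mathcal{E}(\infty)}\M_\star$, which handles a useful edge case and also the $n=\infty$ case of the theorem once one observes $\M_\star\BPn{\infty} = \M_\star\BP$. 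This identification of $\M_\star\BP$ is the motivic shadow of the classical fact that $H_*\BP = P(\xi_1^2,\xi_2^2,\ldots)$ (or $P(\xi_1,\xi_2,\ldots)$ at $p=2$ after Milnor's reindexing), and I would cite it rather than reprove it.

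Next I would analyze the effect of killing the regular sequence $v_{n+1},v_{n+2},\ldots$ on homology. The key input is the motivic Hopkins–Morel/Hoyois-type identification $\M_\star\BPn{n} = \M_\star\BP/(v_{n+1},v_{n+2},\ldots)$ together with the fact that on $\M_\star$-homology each $v_i$ acts as $\tau_i$ modulo decomposables (this is the standard fact that the Hurewicz image of $v_i$ in the dual Steenrod algebra is, up to unit and decomposables, $\tau_i$, and it is the reason $\BPn{n}$ interpolates between $\M\ZZ_{(2)}$ and $\BP$). Working one cofiber sequence at a time — $\Sigma^{|v_i|}\BPn{i}\xrightarrow{v_i}\BPn{i}\to\BPn{i-1}$ — and using that multiplication by $v_i$ is injective on $\M_\star\BPn{i}$, I get short exact sequences of comodules that on the level of rings give $\M_\star\BPn{n} = \M_\star[\xi_1,\xi_2,\ldots]\otimes_{\M_\star}\Lambda_{\M_\star}(\tau_0,\ldots,\tau_n)$ as an $\M_\star$-module. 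To pin down the comodule coaction I would use the $\mathcal{A}_\star$-coaction on $\M_\star$ (via $\tau\mapsto\tau\otimes 1 + \rho\tau_0\otimes 1 + \ldots$, i.e. the right-unit formula) and the coproduct formulas $\Delta\tau_i = \tau_i\otimes 1 + \sum_j\xi_{i-j}^{2^j}\otimes\tau_j$; this is exactly what produces the relations $\tau_i^2 = \rho\tau_{i+1}$ in $\mathcal{E}(n)$ and the truncation $\tau_n^2 = 0$ at the top.

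Finally I would assemble these into the cotensor statement. The point is that $\mathcal{A}\cotensor_{\mathcal{E}(n)}\M_\star$ is computed as the equalizer of the two maps $\mathcal{A}\otimes_{\M_\star}\M_\star \rightrightarrows \mathcal{A}\otimes_{\M_\star}\mathcal{E}(n)\otimes_{\M_\star}\M_\star$; since $\mathcal{A}$ is free over $\M_\star$ and the Hopf algebroid extension $\mathcal{E}(n)\hookrightarrow\mathcal{A}$ splits appropriately (i.e. $\mathcal{A}$ is a free, hence faithfully flat, $\mathcal{E}(n)$-comodule-algebra, which is the motivic analog of the classical freeness of $\mathcal{A}$ over the relevant exterior sub-Hopf-algebra), this cotensor is a sub-$\mathcal{A}_\star$-comodule-algebra of $\mathcal{A}_\star$ of precisely the size $\M_\star[\xi_1,\xi_2,\ldots]\otimes\Lambda(\tau_0,\ldots,\tau_n)$, with the comodule structure I computed above. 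Matching the two descriptions, i.e.\ producing an explicit isomorphism $\M_\star\BPn{n}\xrightarrow{\sim}\mathcal{A}\cotensor_{\mathcal{E}(n)}\M_\star$ of Hopf algebroids and checking it respects coaction, coproduct, and the ring relations, completes the proof; I would cite \cite{Ormsby} and \cite{Hill} for the detailed verification and for the motivic freeness statements, since those are precisely what is proved there.

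The main obstacle will be the motivic faithful-flatness/freeness statement needed to control the cotensor product — classically one uses Milnor–Moore-type structure theorems for the dual Steenrod algebra over a sub-Hopf-algebra, and the motivic version requires care because $\M_\star$ is not a field and the $\tau_i^2 = \rho\tau_{i+1} + \tau\xi_{i+1}$ relations couple the exterior and polynomial parts through $\rho$ and $\tau$. I expect this to be exactly where the cited work of Ormsby (and Hill's construction of the $\mathcal{E}(n)$) does the real labor, so in the write-up I would reduce the claim to those inputs rather than reproving the structure theory.
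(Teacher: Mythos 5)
The paper offers no proof of this theorem at all --- it is imported wholesale from \cite{Ormsby} --- so your instinct to reduce the hard structural statements to that reference is consistent with what the authors actually do, and your overall strategy (compute $\M_\star\BP$, kill the $v_i$ one cofiber sequence at a time, then identify the result with a cotensor product) is the right one. However, your execution contains a step that fails. You assert that multiplication by $v_i$ is injective on $\M_\star\BPn{i}$; in fact it is \emph{zero}, since $v_i$ has positive Adams filtration and hence trivial Hurewicz image in mod-$2$ homology. This is not cosmetic: it is exactly this vanishing that splits the long exact sequence of $\Sigma^{(2^i-1)(1+\alpha)}\BPn{i}\xrightarrow{v_i}\BPn{i}\to\BPn{i-1}$ into short exact sequences
\[
0\to\M_\star\BPn{i}\to\M_\star\BPn{i-1}\to\Sigma^{(2^i-1)(1+\alpha)+1}\M_\star\BPn{i}\to 0,
\]
and the suspended copy on the right is where the new exterior-type generator comes from. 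If $v_i$ acted injectively, the homology of the cofiber would be the cokernel, a \emph{quotient} of $\M_\star\BPn{i}$, and no new generators would appear.

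Your additive answer is also the wrong one. Cotensoring over $\mathcal{E}(n)$, whose coalgebra part is generated by $\tau_0,\dots,\tau_n$, removes precisely those classes; what survives is $\M_\star[\xi_1,\xi_2,\dots]$ together with (suitably corrected representatives of) $\tau_{n+1},\tau_{n+2},\dots$. Your formula $\M_\star[\xi_1,\xi_2,\ldots]\otimes_{\M_\star}\Lambda_{\M_\star}(\tau_0,\ldots,\tau_n)$ is the complementary piece, and it contradicts your own correct starting points: at $n=\infty$ it would give all of $\mathcal{A}_\star$ rather than $\M_\star\BP=\M_\star[\xi_1,\xi_2,\dots]$, and at $n=0$ it would give an exterior class $\tau_0$ in $\M_\star\M\ZZ_{(2)}$ while omitting $\tau_1,\tau_2,\dots$, whereas classically $H_*(H\ZZ;\FF_2)=\FF_2[\xi_1^2,\xi_2,\xi_3,\dots]$. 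Checking the candidate answer against these two known endpoints before matching it with $\mathcal{A}\cotensor_{\mathcal{E}(n)}\M_\star$ would have caught both mistakes.
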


By change-of-rings we can rewrite the $E_2$-term of the MASS for
$\BPn{n}$.

\begin{thm}\label{thm:E2BPn}
For $0\le n \le \infty$, the $E_2$-term of the MASS for $\BPn{n}$ is
isomorphic to
\[
  \Ext_{\mathcal{E}(n)}(\M_\star,\M_\star).
\]
\end{thm}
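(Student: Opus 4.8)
The plan is to combine Theorem~\ref{thm:HBPn} with a standard change-of-rings isomorphism for Ext over Hopf algebroids. Recall that the $E_2$-term of the MASS for $\BPn{n}$ is, by \eqref{EASS}, the group $\Ext_{\mathcal{A}_\star}(\M_\star, \M_\star\BPn{n})$. By Theorem~\ref{thm:HBPn} we may substitute $\M_\star\BPn{n} = \mathcal{A}\cotensor_{\mathcal{E}(n)}\M_\star$, so the claim reduces to the isomorphism
\[
  \Ext_{\mathcal{A}_\star}\bigl(\M_\star,\ \mathcal{A}\cotensor_{\mathcal{E}(n)}\M_\star\bigr)
  \;\cong\;
  \Ext_{\mathcal{E}(n)}(\M_\star,\M_\star).
\]
This is the cobar/comodule analogue of Shapiro's lemma: induction (here extension of scalars along the Hopf algebroid quotient map $\mathcal{A}_\star \to \mathcal{E}(n)$, realized by the cotensor product $\mathcal{A}\cotensor_{\mathcal{E}(n)}(-)$) is right adjoint to restriction, and it is exact, so it passes to an isomorphism on derived functors of comodule-primitives, i.e.\ on $\Ext$.

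First I would verify that the relevant hypotheses for the change-of-rings theorem are met. The key algebraic input is that $\mathcal{A}_\star$ is \emph{free} (equivalently, faithfully flat) as a left $\M_\star$-module --- which is recorded in the excerpt (``$\M_\star\M$ is a commutative free $\M_\star$-algebra'') --- and likewise that $\mathcal{E}(n)$ is free over $\M_\star$, visible from the explicit presentation $\M_\star[\tau_0,\dots,\tau_n]/(\tau_i^2-\rho\tau_{i+1},\tau_n^2)$ in Definition~\ref{defn:En}. One also needs that $\mathcal{A}_\star$ is, via the quotient map, an \emph{extended} (or at least injective/relatively injective) $\mathcal{E}(n)$-comodule on the appropriate side, so that $\mathcal{A}\cotensor_{\mathcal{E}(n)}(-)$ sends injective $\mathcal{E}(n)$-comodules to injective $\mathcal{A}_\star$-comodules and is exact. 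This is the content of the Cartan--Eilenberg type change-of-rings theorem for Hopf algebroids; the cleanest reference is Ravenel's book \cite{Ravenelbook} (Theorem A1.3.12 / the discussion of the Cartan--Eilenberg spectral sequence, which degenerates precisely under a quotient-Hopf-algebroid hypothesis). Since $\mathcal{E}(n)$ is by construction the quotient Hopf algebroid $\mathcal{A}/(\xi_1,\xi_2,\dots)+(\tau_{n+1},\dots)$, the pair $(\mathcal{A}_\star, \mathcal{E}(n))$ satisfies exactly the ``normal quotient'' hypothesis needed, and the formulae given for the coproducts on $\xi_i,\tau_i$ confirm that the ideal being killed is a coideal, so the quotient is a legitimate Hopf algebroid.

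Concretely, the steps are: (1) cite \eqref{EASS} to identify the $E_2$-term as $\Ext_{\mathcal{A}_\star}(\M_\star,\M_\star\BPn{n})$; (2) apply Theorem~\ref{thm:HBPn} to rewrite the second argument as $\mathcal{A}\cotensor_{\mathcal{E}(n)}\M_\star$; (3) invoke the change-of-rings isomorphism for the quotient Hopf algebroid map $\mathcal{A}_\star\to\mathcal{E}(n)$, using freeness of both $\mathcal{A}_\star$ and $\mathcal{E}(n)$ over $\M_\star$ to guarantee the relevant flatness/injectivity, to conclude $\Ext_{\mathcal{A}_\star}(\M_\star,\mathcal{A}\cotensor_{\mathcal{E}(n)}\M_\star)\cong\Ext_{\mathcal{E}(n)}(\M_\star,\M_\star)$. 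The main obstacle --- really a bookkeeping obstacle rather than a conceptual one --- is checking that the hypotheses of the motivic change-of-rings theorem genuinely apply over a field like $\QQ$ whose motivic cohomology is arithmetically complicated: one must make sure that the structure maps (in particular the right unit $\eta_R$, which acts nontrivially on $\tau$ via $\eta_R(\tau)=\tau+\rho\tau_0$) are tracked correctly through the quotient, and that no hidden finiteness or connectivity assumption is being smuggled in. But because $\M_\star$-freeness holds for $\mathcal{A}_\star$ and $\mathcal{E}(n)$ over \emph{any} characteristic-zero field, the purely algebraic change-of-rings step is insensitive to the base field, and this is what makes the argument go through uniformly for $\QQ$ and all its completions.
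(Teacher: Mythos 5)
Your proposal is correct and is essentially the paper's argument: the paper simply states that the identification follows ``by change-of-rings'' from Theorem~\ref{thm:HBPn}, which is exactly your substitution of $\M_\star\BPn{n}=\mathcal{A}\cotensor_{\mathcal{E}(n)}\M_\star$ followed by the Cartan--Eilenberg/Shapiro change-of-rings isomorphism for the quotient Hopf algebroid $\mathcal{A}_\star\to\mathcal{E}(n)$. Your verification of the freeness and quotient-Hopf-algebroid hypotheses is a reasonable elaboration of what the paper leaves implicit.
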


Theorem \ref{thm:E2BPn} provides computational control over the
$E_2$-term of the MASS, and in the next section we will review how the
$\rho$-Bockstein spectral sequence produces explicit calculations over
particular fields.

Currently, we make precise the connections between $\BPn{0}$ and
$\BPn{1}$ and more well-known motivic spectra.  These provide motivation
for thinking of the $\BPn{n}$, $n\ge 2$, as higher chromatic level
spectra in the motivic context (in a sense which we will not make
precise in this paper).  Throughout our computations we will use
the connection between $\BPn{0}$ and $\M\ZZ$ to initiate calculations,
and for
each of our $n\ge 1$ results we will comment on the algebraic
$K$-theory implications of the $n=1$ case.

The following result follows from announced work of Hopkins and Morel.
A detailed proof is given by Hoyois in \cite{Hoyois}.
\begin{thm}
The motivic spectra $\BPn{0}$ and $\M\ZZ_{(p)}$ are isomorphic over any field of characteristic zero.
\end{thm}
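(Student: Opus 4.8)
The plan is to deduce the statement from the Hopkins--Morel presentation of motivic cohomology as a quotient of algebraic cobordism, passing from $\MGL$ to $\BP$ via the Quillen idempotent. Fix a prime $p$ (the relevant case for this paper being $p=2$). Over any field $k$ of characteristic zero, choose polynomial generators of the Lazard ring $\pi_{*(1+\alpha)}\MGL = \ZZ[x_1,x_2,\dots]$ with $\vert x_i\vert = i(1+\alpha)$; the Hopkins--Morel--Hoyois theorem asserts that the $\MGL$-module
\[
\MGL/(x_1,x_2,\dots) \defined \mathrm{colim}\bigl(\MGL \to \MGL/x_1 \to \MGL/(x_1,x_2) \to \cdots\bigr),
\]
obtained by iteratively coning off this regular sequence, is equivalent to $\M\ZZ$. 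First I would invoke this theorem --- the deep input, proved in detail by Hoyois --- and localize at $p$, getting $\MGL_{(p)}/(x_1,x_2,\dots)\simeq\M\ZZ_{(p)}$; this is legitimate because $(-)_{(p)}$ is exact and commutes with the sequential homotopy colimit.

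Next I would set up the $p$-typical picture. By the Quillen idempotent $\BP$ is a retract of $\MGL_{(p)}$, and one may choose the generators $x_i$ of the $p$-local Lazard ring so that $x_{p^k-1} = v_k$ for all $k\ge 1$ (recall $\vert v_k\vert = (p^k-1)(1+\alpha)$). With these choices $\MGL_{(p)}$ is a free $\BP$-module on the monomials in the $x_i$ with $i+1\neq p^k$; informally $\MGL_{(p)}\simeq\bigvee_\mu \Sigma^{\vert\mu\vert}\BP$, the wedge running over such monomials $\mu$, with the $\MGL_{(p)}$-action implemented by multiplication by the $x_i$ (this wedge decomposition is clean when $\cd_2 k(\sqrt{-1})<\infty$, as for $\QQ$ and its completions, and is handled in general by Hoyois).

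The core of the argument is to check that coning off the ``non-$v$'' generators collapses this onto the unit summand, i.e.
\[
\MGL_{(p)}/(x_i : i+1\neq p^k) \simeq \BP
\]
as $\BP$-modules: the $x_i$ with $i+1\neq p^k$ form a regular sequence whose quotient is $\BP_\star$, so coning them off annihilates every free summand except the one indexed by the empty monomial. Granting this, and using that (up to equivalence) coning off a regular sequence depends only on the ideal it generates --- so that, since $(x_1,x_2,\dots) = (x_i : i+1\neq p^k) + (v_1,v_2,\dots)$, the cone-offs may be regrouped --- one obtains
\[
\M\ZZ_{(p)} \simeq \MGL_{(p)}/(x_1,x_2,\dots) \simeq \bigl(\MGL_{(p)}/(x_i : i+1\neq p^k)\bigr)/(v_1,v_2,\dots) \simeq \BP/(v_1,v_2,\dots) = \BPn{0},
\]
which is the claim.

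I expect the main obstacle to be the infinite bookkeeping in this last step: making the nested sequential colimits (collapsing the wedge of $\BP$'s versus killing the $v_k$'s) interleave correctly and verifying the regularity claims degree by degree over an arbitrary characteristic-zero field. This is precisely the work carried out carefully by Hoyois, so at the level of the present paper it suffices to cite that reference; the paragraphs above record why the $\MGL$-level theorem forces the $\BPn{0}$-level statement. An alternative, more homotopy-theoretic route would be to construct the comparison map $\BPn{0} = \BP/(v_1,v_2,\dots)\to\M\ZZ_{(p)}$ directly from the orientation $\MGL\to\M\ZZ$ (which kills every $x_i$ for degree reasons) and then verify it induces an isomorphism on mod-$2$ motivic homology: by Theorem \ref{thm:HBPn}, $\M_\star\BPn{0} = \mathcal{A}\cotensor_{\mathcal{E}(0)}\M_\star$ with $\mathcal{E}(0) = (\M_\star,\M_\star[\tau_0]/\tau_0^2)$, and the same description holds for $\M_\star\M\ZZ$ (from the structure of the motivic Steenrod algebra, $\tau_0$ being dual to the motivic Bockstein), after which a Whitehead-type theorem for connective cellular finite-type spectra would finish the argument.
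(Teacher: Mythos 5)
Your proposal is correct and takes essentially the same route as the paper: the paper offers no argument beyond citing the Hopkins--Morel theorem as proved by Hoyois, and your write-up is the standard deduction of the $\BPn{0}\simeq\M\ZZ_{(p)}$ statement from the $\MGL$-level equivalence $\MGL/(x_1,x_2,\dots)\simeq\M\ZZ$ (localize at $p$, identify $\BP$ as the quotient killing the non-$v$ generators, regroup the cone-offs). The points you flag as requiring care --- the compatibility of the Quillen-idempotent and quotient constructions of $\BP$ and the bookkeeping of infinite regular sequences --- are exactly what is deferred to Hoyois here as well.
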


The $2$-local connective algebraic $K$-theory spectrum $\kgl_{(2)}$ is precisely $\BPn{1}$ by definition.  
(At odd primes, 
$\BPn{1}$ is an Adams summand of localized connective algebraic $K$-theory \cite[\S 4]{NSO3}.)

\begin{lemma}
\label{lem:kgl}
Suppose $S$ is a separated Noetherian scheme of finite Krull dimension.
There exists a connective algebraic $K$-theory motivic spectrum $\kgl$ such that the natural map $\kgl\rightarrow\KGL$ to 
algebraic $K$-theory becomes a weak equivalence after inverting the
Bott map.  In particular, if $S= \Spec k$ we have
\[
  \pi_m v_1^{-1}\BPn{1}_k \cong K_m(k)
\]
where $K_*(k)$ denotes 2-complete algebraic $K$-theory of $k$.
\end{lemma}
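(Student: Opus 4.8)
The plan is to construct $\kgl$ as the effective cover (connective cover in the slice sense) of $\KGL$ and then identify it with $\BPn{1}_{(2)}$ at the prime $2$. First I would recall that over a separated Noetherian base of finite Krull dimension, $\KGL$ is an object of $\SH(S)$ represented by algebraic $K$-theory, and that it is a periodic spectrum via the Bott map $\beta\colon \Sigma^{1+\alpha}\KGL \to \KGL$. I would then set $\kgl \defined f_0\KGL$, the zeroth effective cover coming from the slice filtration, which comes equipped with a canonical map $\kgl \to \KGL$. The content of the lemma's first sentence is that this map becomes an equivalence after inverting $\beta$; this follows because $\KGL$ is already $\beta$-periodic and the slice tower of $\KGL$ is the Bott tower, so $\mathrm{colim}\,(f_0\KGL \xrightarrow{\beta} \Sigma^{-(1+\alpha)}f_0\KGL \xrightarrow{\beta} \cdots) \simeq \KGL$. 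This is essentially the Levine/Gepner–Snaith/Spitzweck–{\O}stv{\ae}r description of algebraic $K$-theory, which I would cite rather than reprove.

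Next I would connect $\kgl$ to $\BPn{1}$. By the Hopkins–Morel isomorphism $\BPn{0}\simeq \M\ZZ_{(p)}$ and the construction of $\BPn{1}$ as the $\BP$-module obtained by killing $v_2,v_3,\ldots$, one has a class $v_1 \in \pi_{1+\alpha}\BPn{1}$. The key input is that localized connective algebraic $K$-theory agrees with $v_1^{-1}\BPn{1}$ after $2$-completion: the slices of $\kgl_{(2)}$ are shifted copies of $\M\ZZ_{(2)}$ (the motivic analogue of the fact that $ku$ has slices $\Sigma^{2j}H\ZZ$), matching the slices of $\BPn{1}_{(2)}$, and the multiplicative structure forces the two to coincide. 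Concretely, one invokes that $\kgl_{(2)} \simeq \BPn{1}$ as motivic ring spectra (this is the definitional identification alluded to just before the lemma in the text, $\kgl_{(2)} = \BPn{1}$), so that inverting $v_1$ on $\BPn{1}_{(2)}$ is the same as inverting $\beta$ on $\kgl_{(2)}$, hence $v_1^{-1}\BPn{1} \simeq \KGL_{(2)}\,\widehat{}\,$ after $2$-completion.

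Finally, to extract the homotopy groups I would take $S = \Spec k$ and compute $\pi_m$ of both sides in the zero-$\alpha$ line. Since $v_1^{-1}\BPn{1}_k \simeq \KGL_k$ after $2$-completion, and $\pi_{m+0\alpha}\KGL_k = K_m(k)$ by the representability of algebraic $K$-theory in $\SH(k)$ together with the fact that $\KGL$-homotopy in the zero-weight line recovers Quillen $K$-theory, the $2$-completed statement $\pi_m v_1^{-1}\BPn{1}_k \cong K_m(k)$ follows, where the right-hand side now denotes $2$-complete $K$-theory by our standing convention. The main obstacle is the identification of the slice-theoretic connective cover $f_0\KGL$ with the $\BP$-module-theoretic truncation $\BPn{1}$: making this precise requires either the Spitzweck–{\O}stv{\ae}r comparison of the two models of connective algebraic $K$-theory, or a slice computation showing both have slices $s_j = \Sigma^{j(1+\alpha)}\M\ZZ$ and then a uniqueness argument. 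I would lean on the cited literature (\cite{NSO3} and the effective-cover description of $\kgl$) for this step rather than developing it from scratch, since the paper treats $\kgl_{(2)} = \BPn{1}$ as a definition.
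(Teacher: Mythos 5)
Your construction of $\kgl$ is genuinely different from the paper's, and the difference matters. The paper does not take $\kgl$ to be the effective cover $f_0\KGL$: it defines $\kgl=\MGL/(x_2,x_3,\dots)$ by iterated cofibers of multiplication by $x_i\in\MGL_{i(1+\alpha)}$ in the homotopy category of $\MGL$-modules. With that definition the identification $\kgl_{(2)}\simeq\BPn{1}$ is essentially tautological (both are obtained from $\MGL_{(2)}$ by killing the same regular sequence, up to the Quillen idempotent), and all of the work goes into the first sentence of the lemma: since $\KGL$ is the Landweber exact spectrum attached to $x_1^{-1}\MU_*/(x_2,x_3,\dots)\cong\ZZ[x_1,x_1^{-1}]$, one checks inductively that $x_1^{-1}\MGL/(x_2,\dots,x_n)$ is the Landweber exact spectrum attached to $x_1^{-1}\MU_*/(x_2,\dots,x_n)$ for each $n$, and passes to the colimit to get $x_1^{-1}\kgl\simeq\KGL$. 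This argument works over any separated Noetherian base of finite Krull dimension, which is the stated hypothesis.

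Your route has a gap exactly where you flag one, and it cannot be discharged by appealing to ``the definitional identification alluded to just before the lemma'': that identification is for the $\MGL$-module quotient, not for $f_0\KGL$, so invoking it for your $\kgl$ is circular. Identifying $f_0\KGL$ with $\BPn{1}$ at $2$ is a substantive theorem: it needs the computation of the slices of $\KGL$ (which over bases this general requires input well beyond the perfect-field case) together with an actual comparison argument, since two connective ring spectra with abstractly isomorphic slices need not be equivalent --- ``the multiplicative structure forces the two to coincide'' is the conclusion of such an argument, not a proof of it. The reference \cite{NSO3} you lean on concerns the odd-primary Adams summand, not this identification. The Bott-tower step of your argument ($\KGL$ as the colimit of shifted effective covers) is fine, and when the slice comparison is available your approach gives a cleaner conceptual picture tied to the Gepner--Snaith/Spitzweck--{\O}stv{\ae}r description of $\KGL$; but for the lemma as stated the Landweber-exactness route is both more elementary and more general, and it is what makes the passage from $\BPn{1}$ to $K_*(k)$ immediate once $v_1$ corresponds to the Bott class.
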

\begin{proof}
Recall that $\KGL$ is the motivic Landweber exact spectrum associated to the $\MU_{\ast}$-algebra 
\begin{equation*}
x_{1}^{-1}\MU_{\ast}/(x_{2},x_{3},\dots)\MU_{\ast}\cong\ZZ[x_1,x_1^{-1}] 
\end{equation*}
classifying the multiplicative formal group law $\x+\y-x_1\x\y$, 
cf.~\cite{NSO1,NSO2}.
Here we employ a fixed isomorphism 
\[
\MU_{\ast}\cong\ZZ[x_{1},x_{2},x_{3},\dots]
\] 
of graded rings where $\vert x_{i}\vert=i$ (that is, half of the usual topological grading).
The canonical map $\MU_{\ast}\rightarrow\MGL_{*(1+\alpha)}$ affords forming the quotient 
\begin{equation*}
\kgl=\MGL/(x_{2},x_{3},\dots)
\end{equation*}
by taking iterated cofibers of the multiplication by $x_{i}\in\MGL_{i(1+\alpha)}$ map in the homotopy category of $\MGL$-modules.
The orientation map for $\KGL$ sends $x_{i}$ to $0\in\KGL_{i(1+\alpha)}$ for $i\geq 2$.
Hence there exists a naturally induced map $\kgl\rightarrow\KGL$.
In order to show this map becomes a weak equivalence when inverting the Bott map,
i.e.,
\[
x_1^{-1}\kgl\cong\KGL,
\] 
it suffices, 
by passing to the colimit, 
to show that 
\begin{equation*}
x_{1}^{-1}\MGL/(x_{2},x_{3},\dots,x_{n})
\end{equation*}
is the motivic Landweber exact spectrum associated to the $\MU_{\ast}$-module
\begin{equation*}
x_{1}^{-1}\MU_{\ast}/(x_{2},x_{3},\dots,x_{n})
\end{equation*}
for every $n\geq 2$.
This can be verified inductively, 
cf.~\cite[Theorem 5.2]{Spitzweckslices}.
\end{proof}

Later, we will use Lemma \ref{lem:kgl} and computations of $\pi_\star\BPn{1}$ to prove statements about classical algebraic $K$-theory.  One simply inverts $v_1$ and reads off the weight 0 component to determine the algebraic $K$-theory of the base field.

We conclude this section by identifying the algebraic cobordism spectrum $\MGL$ with a wedge of suspensions of $\BP$.

\begin{thm}\label{thm:wedge}
Suppose $k$ is a field with finite virtual cohomological dimension at 2.  Let $x_i$, $i\ge 1$ denote the standard Lazard ring polynomial generators in degree $i(1+\alpha)$.  Let $\BP = \BP_k$ and $\MGL = \MGL_k$ denote the 2-complete Brown-Peterson and algebraic cobordism spectra over $k$.  Let $S$ denote the set of monomials $x_I$ in the $x_i$ where no factor is of the form $x_{2^j-1}$, $j\ge 1$.  Then there is an equivalence
\begin{equation}\label{eqn:wedge}
  \bigvee_{x_I\in S} \Sigma^{|x_I|}\BP\to \MGL.
\end{equation}
\end{thm}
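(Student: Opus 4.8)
The plan is to mimic Quillen's topological argument via the motivic Adams spectral sequence, using the identification of the $E_2$-term of the MASS for $\BP$ from Theorem \ref{thm:E2BPn}. First I would construct the map \eqref{eqn:wedge}: for each $x_I \in S$ the class $x_I \in \MGL_{|x_I|}$ gives a map $\Sigma^{|x_I|}\one \to \MGL$, and since $\MGL_{(2)}$ splits off $\BP$ as its $2$-typical summand via the Quillen idempotent \cite{HuKriz,Vezzosi}, each such class is hit from a map $\Sigma^{|x_I|}\BP \to \MGL$ of $\BP$-modules (using the $\MGL$-module, hence $\BP$-module, structure on $\MGL$). Assembling these over all $x_I \in S$ produces the wedge map. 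The content of the theorem is that this map is an equivalence after $2$-completion, and by the strong convergence furnished by Theorem \ref{thm:ASSconvergence} (the hypothesis $\cd_2(k(\sqrt{-1})) < \infty$ is exactly what is invoked), it suffices to show that the induced map on MASS $E_2$-terms is an isomorphism.

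The key algebraic step is therefore a computation of mod-$2$ motivic homology. On the source, $\M_\star\bigl(\bigvee_{x_I\in S}\Sigma^{|x_I|}\BP\bigr) = \bigoplus_{x_I\in S} \Sigma^{|x_I|}\M_\star\BP$, and by Theorem \ref{thm:HBPn} with $n=\infty$ this is $\bigl(\bigoplus_{x_I\in S}\Sigma^{|x_I|}\M_\star\bigr)\otimes_{\M_\star}(\mathcal{A}\cotensor_{\mathcal{E}(\infty)}\M_\star)$ as a comodule. On the target I need the comodule structure of $\M_\star\MGL$. The standard motivic computation (following Voevodsky/Hopkins--Morel, as used in \cite{HKO,DI,Ormsby}) gives $\M_\star\MGL \cong \M_\star[b_1, b_2, \ldots]$ with the $b_i$ in bidegree $i(1+\alpha)$, and as a comodule it is the extended comodule $\mathcal{A}\cotensor_{\mathcal{E}(\infty)}\bigl(\M_\star[b_i : i\ge 1]\bigr)$ — i.e. $\M_\star\MGL$ is, in the same way as topologically, a free $\M_\star\BP$-module on polynomial generators indexed by the monomials in the $b_i$ with $i\ne 2^j-1$. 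The point is a Milnor--Novikov style change of polynomial generators: the $\M_\star\BP$-module generators of $\M_\star\MGL$ can be taken to be exactly the images of the monomials $x_I \in S$ under the map of source and target, so that the comparison map is an iso on $\M_\star(-)$. Then $\Ext_{\mathcal{A}_\star}(\M_\star, -)$ applied to an isomorphism of extended comodules yields, by the change-of-rings isomorphism (Theorem \ref{thm:E2BPn} and the cotensor formalism), an isomorphism on $E_2$-pages. Strong convergence finishes the argument.

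The main obstacle, and the step requiring genuine care rather than transcription from topology, is verifying that the change-of-variables sending the $x_I\in S$ to a system of $\M_\star\BP$-module generators of $\M_\star\MGL$ works motivically — concretely, that the composite $\bigoplus_{x_I\in S}\Sigma^{|x_I|}\M_\star\BP \to \M_\star\MGL$ determined by our geometric map is surjective (equivalently, bijective by a degreewise rank/Poincaré-series count, since both sides are free $\M_\star$-modules of the same graded rank, the Poincaré series being the same as in topology after the substitution $t\mapsto t(1+\alpha)$). Here one must be slightly careful because $\M_\star$ is not a field: $\M_\star = k^M_*[\tau]$, so freeness over $\M_\star$ together with the comodule-primitivity of the relevant generators must be used to reduce the surjectivity check to the associated graded / to base change along $\M_\star \to \M_\star/(\rho, \tau + \text{decomposables})$ where the computation becomes the classical one. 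Once surjectivity of this map of free $\M_\star$-modules is in hand, bijectivity follows by comparing graded ranks, and the rest is formal. I expect no essential new input beyond Theorem \ref{thm:HBPn}, the known form of $\M_\star\MGL$, and the convergence theorem.
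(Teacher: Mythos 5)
Your proposal follows essentially the same route as the paper's proof: build the map by multiplication by the monomials $x_I$ using the $\BP$-module structure on $\MGL$, identify the induced map on mod-2 motivic homology via Theorem \ref{thm:HBPn} and the known comodule $\M_\star\MGL$, deduce an isomorphism of MASS $E_2$-terms by change of rings, and invoke strong convergence under the hypothesis $\cd_2(k(\sqrt{-1}))<\infty$. The one ingredient you omit is the final passage from a $\pi_\star$-isomorphism to an equivalence: in $\SH(k)$ a map inducing an isomorphism on bigraded homotopy groups over the base need not be an equivalence in general, and the paper closes this gap by noting that $\MGL$ and $\BP$ are cellular, so the cellular Whitehead theorem applies; you should add that remark, but otherwise the argument matches the paper's.
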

\begin{proof}
The $x_i$ exist because $\MGL$ is the universal algebraically oriented spectrum.  The maps in (\ref{eqn:wedge}) are given by multiplication by $x_I$.  The motivic homology of $\BP$ and $\MGL$ is known by Theorem \ref{thm:HBPn} and \cite[Proposition 6]{Bor}.  Applying the MASS to (\ref{eqn:wedge}) we get an isomorphism on $E_2$-terms.  The MASS converges to homotopy groups of 2-completions because of our hypotheses on $k$ \cite{KO}. It follows that (\ref{eqn:wedge}) is an isomorphism on homotopy groups.  Since $\MGL$ and $\BP$ are cellular, we get that (\ref{eqn:wedge}) is an equivalence.
\end{proof}

\section{Computations over completions of $\QQ$}
\label{sec:comp}

In this section we review known MASS computations of $\BPn{n}\hat{_2}_\star$
over $\CC$, $\RR$, and $\QQ_p$, $p>2$, and present a new calculation of
$\BPn{n}\hat{_2}_\star$ over $\QQ_2$.  Here $(~)\hat{_2}$ denotes
Bousfield localization at the motivic mod 2 Moore spectrum.
The differential $d_{r}$ takes the form $E_{r}^{s,m+n\alpha}\rightarrow E_{r}^{s+r,m+r-1+n\alpha}$.
When depicting MASS we shall employ ``Adams grading" by placing elements of 
$E_{r}^{s,m+n\alpha}$ in tri-degree $(m-s+n\alpha,s)$,
with $\alpha$ along the vertical axis.
Thus, 
in Adams grading, 
the $r$-th differential has tri-degree $(-1+0\alpha,r)$.
The same convention applies to Bockstein spectral sequences.

\emph{Notice:} In the rest of the paper we elide the 2-completion symbol $(~)\hat{_2}$ for legibility.  
In other words, 
we proceed to work in the $2$-complete stable motivic homotopy category.

The results over $\CC$ are due to
Hu-Kriz-Ormsby \cite{HKO}, over $\RR$ Mike Hill \cite{Hill}, and over
$\QQ_p$ Ormsby \cite{Ormsby}.  We recall the differentials here
because we will need them in Section \ref{sec:rational} to carry out
computations over $\QQ$.

\subsection{The complex place}
We begin by discussing the base field $\CC$, the complex numbers.  These results are not integral to the rest of the paper, but they serve as a nice warm-up case to familiarize the reader with our methods.  
For $\CC$ the motivic Hopf algebra $\mathcal{E}(n)$ (left and right units agree) is the base change of the topological Hopf algebra 
$\mathcal{E}(n)^{\top}$ to the mod $2$ cohomology ring $\FF_2[\tau]$ of a point.
Here $\vert\tau\vert=1-\alpha$.
Thus the following result is immediate,
cf.~\cite[Theorem 3.1.16]{Ravenelbook}.

\begin{prop}
\label{prop:ExtE1C}
Over $\CC$ there is an algebra isomorphism
\[
\Ext^{\ast}_{\mathcal{E}(n)}(\M_{\star},\M_{\star})
= \M_\star[v_0,\ldots,v_n] = 
\FF_2[\tau,v_0,\ldots,v_n]
\]
where $\vert\tau\vert=(1-\alpha,0)$ and $\vert v_{i}\vert =
((2^{i}-1)(1+\alpha),1)$ in Adams tri-grading.
\end{prop}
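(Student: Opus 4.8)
The plan is to compute $\Ext_{\mathcal{E}(n)}(\M_\star,\M_\star)$ directly over $\CC$, where the Hopf algebroid degenerates to an honest Hopf algebra. Over $\CC$ the class $\rho = [-1] \in k^M_1$ vanishes, so by Definition \ref{defn:En} the underlying algebra of $\mathcal{E}(n)$ is
\[
  \mathcal{E}(n) = \FF_2[\tau][\tau_0,\ldots,\tau_n]/(\tau_0^2,\tau_1^2,\ldots,\tau_n^2),
\]
an exterior algebra on the primitively-indexed classes $\tau_0,\ldots,\tau_n$ over the ground ring $\M_\star = \FF_2[\tau]$; moreover the left and right units agree (since $\eta_R(\tau) = \tau + \rho\tau_0 = \tau$) and the coproduct on each $\tau_i$ reduces mod $(\xi_1,\xi_2,\ldots)$ to $\Delta\tau_i = \tau_i\otimes 1 + 1\otimes\tau_i$, so every $\tau_i$ is primitive. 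Thus $\mathcal{E}(n)$ is the base change to $\FF_2[\tau]$ of the topological exterior Hopf algebra $\mathcal{E}(n)^{\top} = \FF_2[\tau_0,\ldots,\tau_n]/(\tau_i^2)$ on primitive generators.

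First I would invoke the standard fact that $\Ext$ over an exterior algebra on primitive generators is a polynomial algebra on the corresponding $\Ext^1$ classes: for $\mathcal{E}(n)^{\top}$ one has $\Ext_{\mathcal{E}(n)^{\top}}(\FF_2,\FF_2) = \FF_2[v_0,\ldots,v_n]$ with $v_i \in \Ext^1$ dual to $\tau_i$ (this is \cite[Theorem 3.1.16]{Ravenelbook}, using the small cobar/Koszul complex, or equivalently $\Lambda[\tau_i] $ being the homology of a dga whose cohomology is a divided power, hence over $\FF_2$ a polynomial, algebra). Second, because $\mathcal{E}(n)$ is obtained from $\mathcal{E}(n)^{\top}$ by the flat base change $\FF_2 \to \FF_2[\tau]$ (the ground ring changes but the comodule $\M_\star$ is just $\FF_2[\tau]$ itself, extended from $\FF_2$), the cobar complex computing $\Ext_{\mathcal{E}(n)}$ is obtained from that computing $\Ext_{\mathcal{E}(n)^{\top}}$ by tensoring up over $\FF_2$ with $\FF_2[\tau]$, which is exact; hence
\[
  \Ext_{\mathcal{E}(n)}(\M_\star,\M_\star) = \FF_2[\tau]\otimes_{\FF_2}\Ext_{\mathcal{E}(n)^{\top}}(\FF_2,\FF_2) = \FF_2[\tau,v_0,\ldots,v_n] = \M_\star[v_0,\ldots,v_n].
\]
Finally I would pin down the bidegrees: $\tau$ contributes $(1-\alpha,0)$ by definition, and $v_i$, being the $\Ext^1$ class dual to $\tau_i$ in cobar degree $1$, sits in homological degree $s=1$ and internal degree $|\tau_i| = 1+(2^i-1)(1+\alpha)$; in Adams tri-grading $(m-s+n\alpha,s)$ this is $((2^i-1)(1+\alpha),1)$, matching the claim. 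Identifying these $\Ext^1$ classes with the homotopy-theoretic $v_i$ is compatible with the construction of $\BPn{n}$ and its $E_2$-term from Theorem \ref{thm:E2BPn}.

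The only real point requiring care — and the step I expect to be the mild obstacle — is confirming that the base-change argument is legitimate at the level of Hopf \emph{algebroids}: one must check that over $\CC$ the structure genuinely reduces to a Hopf algebra (left unit $=$ right unit, all $\tau_i$ primitive) so that the cobar complex is the naive one and the $\FF_2 \to \FF_2[\tau]$ flat base change commutes with taking $\Ext$. Since $\rho = 0$ over $\CC$ this is a routine verification against the formulas recalled in Section \ref{sec:MASS}, so the result is indeed immediate as claimed.
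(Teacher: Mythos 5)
Your proposal is correct and follows essentially the same route as the paper: the paper likewise observes that over $\CC$ (where $\rho=0$) the Hopf algebroid $\mathcal{E}(n)$ degenerates to the base change of the topological exterior Hopf algebra $\mathcal{E}(n)^{\top}$ to $\FF_2[\tau]$, and then cites \cite[Theorem 3.1.16]{Ravenelbook}. You have simply spelled out the flat base change and the primitivity/unit checks that the paper leaves implicit in calling the result ``immediate.''
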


The generator $v_{i}$ is represented by the class of $\tau_{i}$ in the cobar complex.

\begin{thm}\label{thm:kC}
The motivic Adams spectral sequence for $\BPn{n}$ collapses at $E_2$ and
\[
\BPn{n}_\star = (\M\ZZ_2)_\star[v_1,\ldots,v_n] = \ZZ_2[\tau,v_1,\ldots,v_n]
\]
where $\vert\tau\vert=1-\alpha, \vert v_{i}\vert=(2^i-1)(1+\alpha)$.
The polynomial generator $v_{1}$ is the Bott periodicity operator for
$\kgl = \BPn{1}$.
\end{thm}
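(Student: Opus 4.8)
The plan is to show that the motivic Adams spectral sequence (MASS) for $\BPn{n}$ over $\CC$ collapses at $E_2$ for purely degree-theoretic reasons, and then to solve the resulting extension problem by comparison with the topological computation and with $\BPn{0} = \M\ZZ_{(2)}$. By Theorem \ref{thm:E2BPn} and Proposition \ref{prop:ExtE1C}, the $E_2$-term is the polynomial algebra $\FF_2[\tau,v_0,\ldots,v_n]$, where $\tau$ sits in Adams filtration $0$ and each $v_i$ in filtration $1$. The differential $d_r$ has tri-degree $(-1+0\alpha,r)$, so it lowers the first coordinate of the motivic bidegree by $1$ while fixing the weight coordinate (the $\alpha$-coefficient). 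First I would observe that every element of $E_2$ is a monomial $\tau^a v_0^{e_0}\cdots v_n^{e_n}$, whose bidegree is $(a + \sum e_i(2^i-1))(1) + (-a + \sum e_i(2^i-1))(\alpha)$ in internal grading, hence lies in topological degree $a + \sum e_i(2^i-1) - s$ with $s = \sum e_i$; crucially the weight is $\geq 0$ always, and in a fixed bidegree the filtration $s$ is bounded. A $d_r$ out of such a class would have to land in a class of the same weight but internal degree one lower; I would check that the source and any potential target cannot both be nonzero unless the differential is forced to vanish, because the parity/weight bookkeeping of the monomial basis rules it out — this is the standard collapse argument as in \cite[Theorem 3.1.16]{Ravenelbook} and is the analogue of the topological fact that the $\BPn{n}$ Adams spectral sequence collapses.

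Next I would resolve the multiplicative extensions. The associated graded of $\pi_\star\BPn{n}$ is $\FF_2[\tau,v_0,\ldots,v_n]$ with $v_0$ detecting multiplication by $2$. Since $v_0$ is a polynomial (non-nilpotent) generator and there is no room for hidden $v_0$-extensions off a $v_0$-tower (again by the filtration bound in each bidegree), the classes $v_0^k$ assemble to a $\ZZ_2$ in weight $0$, i.e. $\pi_{0+0\alpha}\BPn{n} = \ZZ_2$; more generally the $v_0$-towers are genuine, so integrally $\pi_\star\BPn{n}$ is torsion-free and is a free $\ZZ_2$-module on the monomials $\tau^a v_1^{e_1}\cdots v_n^{e_n}$. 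This identifies $\BPn{n}_\star$ with $\ZZ_2[\tau,v_1,\ldots,v_n]$ as a graded $\ZZ_2$-module with $|\tau| = 1-\alpha$ and $|v_i| = (2^i-1)(1+\alpha)$. To upgrade this to a ring isomorphism I would invoke the base-change map from the topological sphere, or equivalently compare with the known $\pi_\star\M\ZZ_2 = \ZZ_2[\tau]$ (the $n=0$ case, via Hopkins–Morel) and with the topological $\pi_\ast\BPn{n} = \ZZ_{(2)}[v_1,\ldots,v_n]$: the motivic $v_i$ map to the topological $v_i$ under the complex realization / Betti-weight-zero functor, and this pins down the ring structure, since there are no other elements in the relevant degrees that could perturb the relations.

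Finally I would address the last sentence, that $v_1$ is the Bott periodicity operator for $\kgl = \BPn{1}$. By Lemma \ref{lem:kgl}, inverting $v_1$ in $\BPn{1}$ gives $\KGL$, and $v_1 \in \BPn{1}_{1+\alpha}$ maps to the unit $x_1 \in \KGL_{1+\alpha}$ under the orientation map (by the construction of $\kgl$ as $\MGL/(x_2,x_3,\ldots)$, since $v_1$ and $x_1$ agree modulo decomposables and $x_1$ is the Bott class); hence $v_1$ is Bott periodicity. I expect the main obstacle to be not the collapse — which is formal from the monomial basis — but the careful verification that there are no hidden extensions: one has to argue that in a fixed bidegree $m + n\alpha$ only finitely many Adams filtrations are occupied and that the $v_0$-tower structure is rigid, so that the integral answer is genuinely torsion-free rather than containing some $\ZZ/2^k$. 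This is where I would be most careful, though over $\CC$ the sparsity of the $E_2$-page (it is concentrated in a single ``diagonal'' pattern dictated by the polynomial generators) makes even this step routine.
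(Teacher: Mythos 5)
Your overall strategy is the same as the paper's: collapse of the MASS for tri-degree reasons from the monomial description of $E_2=\FF_2[\tau,v_0,\ldots,v_n]$, followed by resolving the extensions using the fact that $v_0$ detects multiplication by $2$ (the paper simply cites \cite[Lemma 5.4]{Hill} for the latter). However, two of the specific degree-theoretic claims you lean on are false as stated, and you should replace them. First, the weight is \emph{not} always nonnegative: $\tau^a$ has weight $-a$. The invariant that actually forces collapse is that for every monomial $\tau^a v_0^{e_0}\cdots v_n^{e_n}$ the sum of the stem and the weight equals $2\sum e_i(2^i-1)$, hence is even, while $d_r$ has Adams tri-degree $(-1+0\alpha,r)$ and so changes this sum by $-1$; thus every potential target lies in a bidegree where $E_2$ vanishes. (You allude to a ``parity'' argument but never isolate this invariant, and the invariant you do name does not work.)

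Second, the Adams filtration in a fixed bidegree is \emph{not} bounded: $v_0$ has Adams degree $0+0\alpha$, so every occupied bidegree contains infinitely many filtrations, namely a finite direct sum of full $v_0$-towers $\FF_2[v_0]\cdot\tau^a v_1^{e_1}\cdots v_n^{e_n}$. If the filtration were bounded as you claim, the towers would be finite and you would obtain torsion groups $\ZZ/2^k$ rather than copies of $\ZZ_2$ --- i.e.\ the premise you invoke would yield the wrong conclusion. The correct argument is the opposite one: because $E_\infty$ is a free $\FF_2[v_0]$-module on finitely many generators in each bidegree, $v_0$ detects $2$, and the spectral sequence converges (the filtration is complete and exhaustive after $2$-completion), each infinite $v_0$-tower assembles to a copy of $\ZZ_2$; this is exactly the content of \cite[Lemma 5.4]{Hill}, which also rules out hidden multiplicative extensions. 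With these two corrections your argument agrees with the paper's; the additional comparisons with the topological $\BPn{n}$ and with $\M\ZZ_2$, and the identification of $v_1$ with the Bott class via Lemma \ref{lem:kgl}, are fine but not needed beyond what the $v_0$-argument already gives.
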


\begin{proof}
The collapse of the spectral sequence at its $E_{2}$-page follows for tri-degree reasons from Proposition \ref{prop:ExtE1C}.  
The fact that $v_{0}$ detects multiplication by $2$ in $\pi_0\BPn{n} = \ZZ_2$ resolves all multiplicative extension problems \cite[Lemma 5.4]{Hill}.
\end{proof}

In the $n=1$ case we can deduce an important fact about the algebraic $K$-theory of $\CC$ due to Suslin \cite{Suslinlocalfields}.

\begin{lemma}\label{lem:33}
Let $\mathcal{E}(n)^{\top}$ denote the variant of $\mathcal{E}(n)$ from topology.  The complex topological realization functor induces a map between the Adams spectral sequences for $\BPn{n}_\CC$
\[
\Ext_{\mathcal{E}(n)}^{s,m+n\alpha}(\M_{\star},\M_{\star})
\Longrightarrow
\pi_{m-s+n\alpha}\BPn{n}_\CC
\]
and the topological $\BPn{n}$ spectrum 
\[
\Ext_{\mathcal{E}(n)^{\top}}^{s,t}(\HHH\ZZ/2_{\ast},\HHH\ZZ/2_{\ast})
\Longrightarrow
\pi_{t-s}\BPn{n}.
\]
It sends $\tau$ to $1$ and $v_{i}$ to $v_{i}$ for $i=0,\cdots,n$, and the induced map in weight zero
\[
\pi_{m+0\alpha}\BPn{n}_\CC
\rightarrow
\pi_{m}\BPn{n}
\]
is an isomorphism for all $m\in\ZZ$.
\end{lemma}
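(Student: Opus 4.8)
The plan is to unwind the complex realization functor $\mathrm{Re}_{\CC}\colon\SH(\CC)\to\SH$ on the Adams resolution (\ref{adamstower}).  Recall that $\mathrm{Re}_{\CC}$ is a strong symmetric monoidal, exact left adjoint with $\mathrm{Re}_{\CC}(\one)=\one$, $\mathrm{Re}_{\CC}(\M)=\HHH\ZZ/2$, and $\mathrm{Re}_{\CC}(\MGL)=\MU$; being compatible with orientations it carries the Quillen idempotent and the regular sequence $v_{n+1},v_{n+2},\dots$ to their topological counterparts, whence $\mathrm{Re}_{\CC}(\BPn{n}_{\CC})\simeq\BPn{n}$, the topological truncated Brown--Peterson spectrum.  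Applying $\mathrm{Re}_{\CC}$ to $\overline{\M}\to\one\to\M$ and smashing with $\BPn{n}_{\CC}$, strong monoidality together with exactness identifies $\mathrm{Re}_{\CC}$ of the motivic Adams tower for $\BPn{n}_{\CC}$ with the classical Adams tower for $\BPn{n}$; passing to homotopy spectral sequences yields the asserted map of spectral sequences.

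Next I would identify this map on $E_2$-pages.  On mod $2$ homology $\mathrm{Re}_{\CC}$ induces the map of Hopf algebroids $\mathcal{A}_\star\to\mathcal{A}_\ast^{\top}$ sending $\tau\mapsto1$, $\xi_i\mapsto\xi_i$, $\tau_i\mapsto\tau_i$.  Since over $\CC$ the quotient Hopf algebroid $\mathcal{E}(n)$ is the base change of $\mathcal{E}(n)^{\top}$ along $\FF_2\to\FF_2[\tau]$ (as used before Proposition \ref{prop:ExtE1C}), this descends to the map $\mathcal{E}(n)\to\mathcal{E}(n)^{\top}$, $\tau\mapsto1$.  By functoriality of $\Ext$ and compatibility of the change-of-rings isomorphisms of Theorem \ref{thm:E2BPn}, the induced map of $E_2$-pages is
\[
\FF_2[\tau,v_0,\dots,v_n]=\Ext_{\mathcal{E}(n)}(\M_\star,\M_\star)\longrightarrow\Ext_{\mathcal{E}(n)^{\top}}(\HHH\ZZ/2_\ast,\HHH\ZZ/2_\ast)=\FF_2[v_0,\dots,v_n].
\]
As $v_i$ is the class of the cobar $1$-cocycle $[\tau_i]$ on both sides and $\tau$ sits in Adams filtration $0$, this map is precisely ``set $\tau=1$'': it sends $\tau\mapsto1$ and $v_i\mapsto v_i$.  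Both spectral sequences collapse at $E_2$ (Theorem \ref{thm:kC} motivically, and the standard fact in topology), so the same description holds on $E_\infty$.

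Finally I would restrict to weight $0$.  Reading the weight off the stem, a monomial $\tau^a v_0^{b_0}\cdots v_n^{b_n}$ has weight $-a+\sum_i b_i(2^i-1)$, so the weight-zero part of $\FF_2[\tau,v_0,\dots,v_n]$ is the polynomial subring freely generated by $v_0$ and $\tau^{2^i-1}v_i$ for $1\le i\le n$, and $|\tau^{2^i-1}v_i|=(2^{i+1}-2)+0\alpha$ matches the degree of the topological $v_i$.  The $E_\infty$-map sends $v_0\mapsto v_0$ and $\tau^{2^i-1}v_i\mapsto v_i$, hence is an isomorphism onto the weight-zero $E_\infty$ of the topological spectral sequence.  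Both spectral sequences are strongly convergent --- Theorem \ref{thm:ASSconvergence} applies to $\BPn{n}_{\CC}$ since $\cd_2(\CC)=0$, and the Adams spectral sequence for topological $\BPn{n}$ is strongly convergent --- so the map of spectral sequences induces on weight-zero abutments a map of complete, exhaustive, Hausdorff filtered groups that is an isomorphism on each associated graded; therefore $\pi_{m+0\alpha}\BPn{n}_{\CC}\to\pi_m\BPn{n}$ is an isomorphism for every $m\in\ZZ$.

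The part I expect to carry the real content is the input that $\mathrm{Re}_{\CC}(\BPn{n}_{\CC})\simeq\BPn{n}$ and, more precisely, that realization carries the motivic classes $\tau_i$ and $v_i$ to their topological namesakes; granting these standard compatibilities of the realization functor, the remainder is a grading computation.  (One could sidestep the spectrum-level identification and work only with the induced map $\mathcal{A}_\star\to\mathcal{A}_\ast^{\top}$ of dual Steenrod algebras, since the $E_2$-pages and their collapse are already in hand from Proposition \ref{prop:ExtE1C} and Theorem \ref{thm:kC}.)
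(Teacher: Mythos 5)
Your argument is correct and is essentially the intended one: the paper in fact states Lemma \ref{lem:33} without a written proof, treating it as immediate from Proposition \ref{prop:ExtE1C} and Theorem \ref{thm:kC} together with the standard properties of complex realization ($\mathrm{Re}_{\CC}(\MGL)\simeq\MU$, $\mathrm{Re}_{\CC}(\M)\simeq\HHH\ZZ/2$, $\tau\mapsto 1$), which are exactly the inputs you isolate. Your weight-zero bookkeeping (the weight-zero subring of $\FF_2[\tau,v_0,\dots,v_n]$ being freely generated by $v_0$ and $\tau^{2^i-1}v_i$) and the convergence argument supply precisely the details the authors omit.
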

\begin{rmk}
The isomorphism $K_{\ast}(\CC)\cong\pi_{\ast}\ku$ was shown by Suslin in \cite{Suslinlocalfields} 
using entirely different methods.  This is the $n=1$ case of Lemma
\ref{lem:33}.  
The results in this section generalize to any algebraically closed field of characteristic zero.
\end{rmk}

\subsection{The real place}
For the real numbers $\RR$,  
$\M_{\star}=\FF_2[\tau,\rho]$ as algebras, 
where $\vert\tau\vert=1-\alpha$, $\vert\rho\vert=-\alpha$.
In order to determine the $\Ext$-groups over $\mathcal{E}(n)$, 
we can run the $\rho$-Bockstein spectral sequence for $\rho\colon\Sigma^\alpha\M\rightarrow\M$. 
It is an example of the filtration spectral sequence in \cite[Theorem A 1.3.9]{Ravenelbook}.
This work has been carried out by Hill in \cite[Theorem 3.1]{Hill} who also carefully spells out the properties of the $\rho$-BSS.
By comparison with $\CC$, 
the $E_{1}$-term of the $\rho$-BSS takes the form
\[
\FF_2[\tau,\rho,v_{0},\ldots,v_{n}].
\]

\begin{prop}
\label{prop:ExtEnR}
Over $\RR$, the differentials
\[
  d_{2^{i+1}-1}\tau^{2^i} = \rho^{2^{i+1}-1}v_i,~0\le i\le n,
\]
determine the $\rho$-Bockstein spectral sequence computing $\Ext_{\mathcal{E}(n)}$.

As an algebra,
\[
\Ext^{\star,\ast}_{\mathcal{E}(n)}(\M_{\star},\M_{\star})
= 
\FF_2[\rho,\tau^{2^{n+1}},v_i(j) \mid 0\le i \le n, 0\le j]/(\rho^{2^{i+1}-1}v_i(j))
\]
subject to the additional relations
\[
  v_i(j)v_k(\ell) = v_i(j+2^{k-i}\ell)v_k(0)
\]
when $i\le k$ and
\[
  v_i(j) = \tau^{2^{n+1}}v_i(j-2^{n-i})
\]
when $j\ge 2^{n-i}$.  
Here $v_i(j)$ is represented on $E_{1}$ by $\tau^{2^{i+1}j}v_{i}$, 
and has degree
\[
(2^i(2j+1)-1 - (2^i(2j-1)+1)\alpha,1)
\]
\end{prop}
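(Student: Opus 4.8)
The plan is to run the $\rho$-Bockstein spectral sequence with $E_1$-term $\FF_2[\tau,\rho,v_0,\ldots,v_n]$ (obtained by base change from the complex computation of Proposition \ref{prop:ExtE1C}) and to identify all differentials, then read off the stated algebra presentation of $\Ext_{\mathcal{E}(n)}$. First I would establish the primary differentials $d_1\tau = \rho v_0$. This comes directly from the defining relation $\tau_0^2 = \rho\tau_1 + \tau\xi_1 \equiv \rho\tau_1 \pmod{(\xi_1,\tau_2,\ldots)}$ in $\mathcal{E}(n)$, together with $\eta_R(\tau) = \tau + \rho\tau_0$: in the cobar complex $d[\tau] = [\tau_0]\rho$, so $\tau$ supports a $d_1$ hitting $\rho v_0$ (recalling $v_i$ is represented by $\tau_i$). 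Since $\tau^2$ is then a $d_1$-cycle but, as in the real case worked out by Hill, $\tau^{2}$ must support a longer differential; iterating, one shows inductively that $\tau^{2^i}$ survives to $E_{2^{i+1}-1}$ and there one has $d_{2^{i+1}-1}\tau^{2^i} = \rho^{2^{i+1}-1}v_i$. The inductive input is that after killing $v_0,\ldots,v_{i-1}$ via the earlier differentials, the class $\tau^{2^i}$ is the first power of $\tau$ not yet a boundary, and the relation $\tau_{i-1}^2 = \rho\tau_i$ (valid in $\mathcal{E}(n)$ for $i \le n$, with the extra relation $\tau_n^2 = 0$ accounting for the truncation) forces the $v_i$-target. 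For $i = n$ the relation $\tau_n^2 = 0$ is what makes $\tau^{2^{n+1}}$ a permanent cycle, explaining why it appears as a polynomial generator in the answer.

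Second, I would check that the listed differentials are the \emph{only} ones, by a degree/counting argument: comparing Poincaré series of the claimed $E_\infty$ (equivalently, the claimed $\Ext$ algebra) against the $E_1$-term and the known differentials shows there is no room for further differentials, exactly as Hill does for $\RR$. Third, with $E_\infty$ computed as a bigraded $\FF_2$-vector space, I would solve the (purely algebraic) extension problem of assembling $E_\infty$ into the asserted algebra $\FF_2[\rho, \tau^{2^{n+1}}, v_i(j)]/(\rho^{2^{i+1}-1}v_i(j))$ with the products $v_i(j)v_k(\ell) = v_i(j + 2^{k-i}\ell)v_k(0)$ and the periodicity $v_i(j) = \tau^{2^{n+1}}v_i(j-2^{n-i})$. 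The generator $v_i(j)$ is by definition the class represented by $\tau^{2^{i+1}j}v_i$ on $E_1$; one verifies $\rho^{2^{i+1}-1}v_i(j) = \tau^{2^{i+1}j}(\rho^{2^{i+1}-1}v_i) = 0$ since $\rho^{2^{i+1}-1}v_i$ is the target of $d_{2^{i+1}-1}\tau^{2^i}$. The multiplicative relations among the $v_i(j)$ are forced by the fact that on $E_1$ one literally has $(\tau^{2^{i+1}j}v_i)(\tau^{2^{k+1}\ell}v_k) = \tau^{2^{i+1}(j + 2^{k-i}\ell)}v_i v_k$ when $i \le k$ (rewriting $2^{k+1}\ell = 2^{i+1}\cdot 2^{k-i}\ell$), and similarly the periodicity relation is $\tau^{2^{n+1}}\cdot \tau^{2^{i+1}(j-2^{n-i})}v_i = \tau^{2^{i+1}j}v_i$ since $2^{n+1} = 2^{i+1}\cdot 2^{n-i}$. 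Finally the degree formula for $v_i(j)$ follows by adding the bidegree of $\tau^{2^{i+1}j}$, namely $2^{i+1}j(1-\alpha)$, to $|v_i| = (2^i-1)(1+\alpha)$ and simplifying (the Adams filtration is $1$ since a single $v_i$ appears).

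The main obstacle I anticipate is the \emph{inductive bookkeeping in step one}: rigorously tracking which power $\tau^{2^i}$ survives to which page requires knowing that all lower differentials have already truncated the relevant part of the $E_r$-term, and that no ``accidental'' shorter differential on $\tau^{2^i}$ exists. This is precisely where one leans on Hill's careful setup of the $\rho$-BSS for $\mathcal{E}(n)$ over $\RR$ in \cite[Theorem 3.1]{Hill}; since $\mathcal{E}(n)$ over any field has the same generators and relations modulo the values of $\tau$ and $\rho$, the differential pattern is governed by the same defining relations $\tau_i^2 = \rho\tau_{i+1}$ (for $i<n$) and $\tau_n^2 = 0$, so the computation is formally identical to the real case. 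The secondary subtlety is making sure the extension problem in step three has a \emph{unique} solution giving a polynomial-type algebra rather than something with hidden products; here one uses that the $\rho$-BSS is multiplicative and that all the asserted relations already hold at $E_1$ (i.e. before passing to associated graded), so there is nothing to lift.
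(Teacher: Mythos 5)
Your proposal is correct and takes essentially the same route as the paper, which states this result as a recollection of Hill's computation \cite[Theorem 3.1]{Hill}: the cobar identity $\eta_R(\tau^{2^i})-\tau^{2^i}=\rho^{2^{i+1}-1}\tau_i$ (using $\tau_{i}^2=\rho\tau_{i+1}$ iteratively) driving the differentials, the relation $\tau_n^2=0$ making $\tau^{2^{n+1}}$ a permanent cycle, and the identification of $E_\infty$ via the $E_1$-representatives $\tau^{2^{i+1}j}v_i$ are exactly that argument. The bookkeeping you flag --- in particular that $\tau^{2^{i+1}j}v_i$ is not a cocycle on the nose but survives because its potential differential targets are already boundaries of lower $\rho$-filtration --- is precisely what the cited reference supplies.
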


In Section \ref{sec:rational} it will be useful to have a thorough understanding of the combinatorics of this spectral sequence.  
It is difficult to visualize the computation when $n>1$ using standard conventions because the pictures become far too dense.  
We introduce a graphical calculus below that eliminates this
difficulty.  We analyze the case of $\mathcal{E}(3)$ in detail.  

\begin{rmk}\label{rmk:graph1}
Figure \ref{fig:R} is a graphical presentation of the spectral
sequence in Proposition \ref{prop:ExtEnR} when $n=3$.  We have drawn
the pictures of this quad-graded spectral sequence in only two
dimensions.  Recall that each element of the $E_r$ page of the
$\rho$-BSS has a homological grading $(s,m+n\alpha)$ and also its
$\rho$-power filtration.  We draw such an element in degree
$m-s+n\alpha$ where $m-s$ is plotted on the horizontal axis while
$n\alpha$ goes on the vertical axis.  As such, these pictures
represent the ``total Adams degree'' of the elements in question.  (If
they survive the Adams spectral sequence, this is the degree to which
they abut.)  The
authors prefer to think of the degree $s$ as secretly recorded on a
third axis coming out of the page, while $\rho$-filtration should be
kept track of privately as an extra decoration on each element.  In
this grading, differentials on the $E_r$-page of the $\rho$-BSS point
one to the left (with no vertical component) and out of the page one
unit as well; they increase the decoration by $\rho$-filtration by
$r$.

Note that this is not the ``standard'' Adams grading, which might draw
$m-s+n$ on the horizontal axis and $s$ on the vertical axis, with
weight $n\alpha$ and $\rho$-filtration suppressed.  We find our
pictures more convenient and useful for two reasons.  First, weight
information is often useful in limiting which differentials are
possibly nonzero in $\rho$-Bockstein and motivic Adams spectral sequences.
Second, these pictures do a better job of capturing the sort of
connectivity that the motivic spectra we study enjoy.  In ``standard''
Adams grading, the copy of mod 2 Milnor $K$-theory in $\M_\star$ takes
up the entire negative horizontal axis.  Since we like to think of
$k^M_*$ as ``dimension 0'' information in motivic homotopy, it feels
better to place it along the vertical axis.

The reader can now 
interpret the figure via the following key and
comments:

\begin{center}
\begin{tabular}{l|l}
$\circ$ & $\FF_2[v_0,v_1,v_2,v_3]$\\ \hline
$\bullet$ & $\FF_2[v_0,v_1,v_2,v_3]/v_0$\\ \hline
$\circledcirc$ & $v_0\FF_2[v_0,v_1,v_2,v_3]$\\ \hline
$\bullet^\smallsetminus$ & $\FF_2[v_0,v_1,v_2,v_3]/v_0,v_1$\\ \hline
$\bullet^{\smallsetminus{\kern-3pt}\smallsetminus}$ &
$\FF_2[v_0,v_1,v_2,v_3]/v_0,v_1,v_2$\\ \hline
$\bullet^{\smallsetminus{\kern-3pt}\smallsetminus{\kern-3pt}\smallsetminus}$
& $\FF_2[v_0,v_1,v_2,v_3]/v_0,v_1,v_2,v_3$\\ \hline
$\mid$ & $\rho$-multiplication
\end{tabular}
\end{center}

The authors find it convenient to think of each backslash as ``killing
off'' (or ``blocking'') a $v_i$-multiplication.

Note, though, that in Figure \ref{fig:R} we do not draw the
$v_i$-multiplications in the diagram unless a
$v_i$-multiple is the target of a differential.  In general, a copy of
$\FF_2[v_0,v_1,v_2,v_3]$ lies in a plane perpendicular to the
horizontal and vertical axes which intersects our pictures in a line
of slope $1$. (Within this perpendicular plane, $v_0$-multiplication is
vertical and $v_i$-multipliction has slope $1/(2^i-1)$.)  If a
$v_i$-multiple is the target of a differential, we
draw the appropriate line segment of slope $1$ and draw our
differentials hitting these classes; otherwise, $v_i$-multiplication
is only encoded by our system of circles, dots, and dots with
slashes.  We do this so that the pictures do not become unmanageably
cluttered.

In certain places there are
$v_i$-multiples, $i>0$, which are divisible by $v_0$.  This occurs on
the classes $v_0(j)$ when $j$ is even, and there they are represented
by dashed lines of slope 1 (possibly curved to avoid overlap).  For
example, $v_1v_0(4) = v_1(2)v_0$ so there is a dashed line joining
$v_0(4)$ and $v_1(2)$.  Similarly, there is a dashed line joining
$v_0(4)$ and $v_2(1)$ because $v_2v_0(4) = v_2(1)v_0$.

All labels in these pictures refer to the lowest cohomological degree
element in that Adams bi-grading.  So while the ``target" of the
differential on $\tau$ on the $E_1$-page is labeled $\rho$, the
differential in fact hits $\rho v_0$.  The necessary number of $v_0$'s
can be deduced from the cohomological degree of the source and the
page number of the spectral sequence.

To extend these pictures to larger $n$, the reader simply needs to
reinterpret $\circ$ as $\FF_2[v_0,\ldots,v_n]$ and $\bullet$ with $k$
slashes as $\FF_2[v_0,\ldots,v_n]/v_0,\ldots,v_k$.  If $n>3$, then
$\tau^{16}$ will support a differential and there will be a yet more
elaborate pattern on $\tau^{16}$ in the $E_\infty$ page.  Similar
statements hold for $\tau^{32}, \tau^{64}$, etc.
\end{rmk}

\begin{center}
\begin{figure}
\begin{tabular}{ccc}
\includegraphics[width=2.2in]{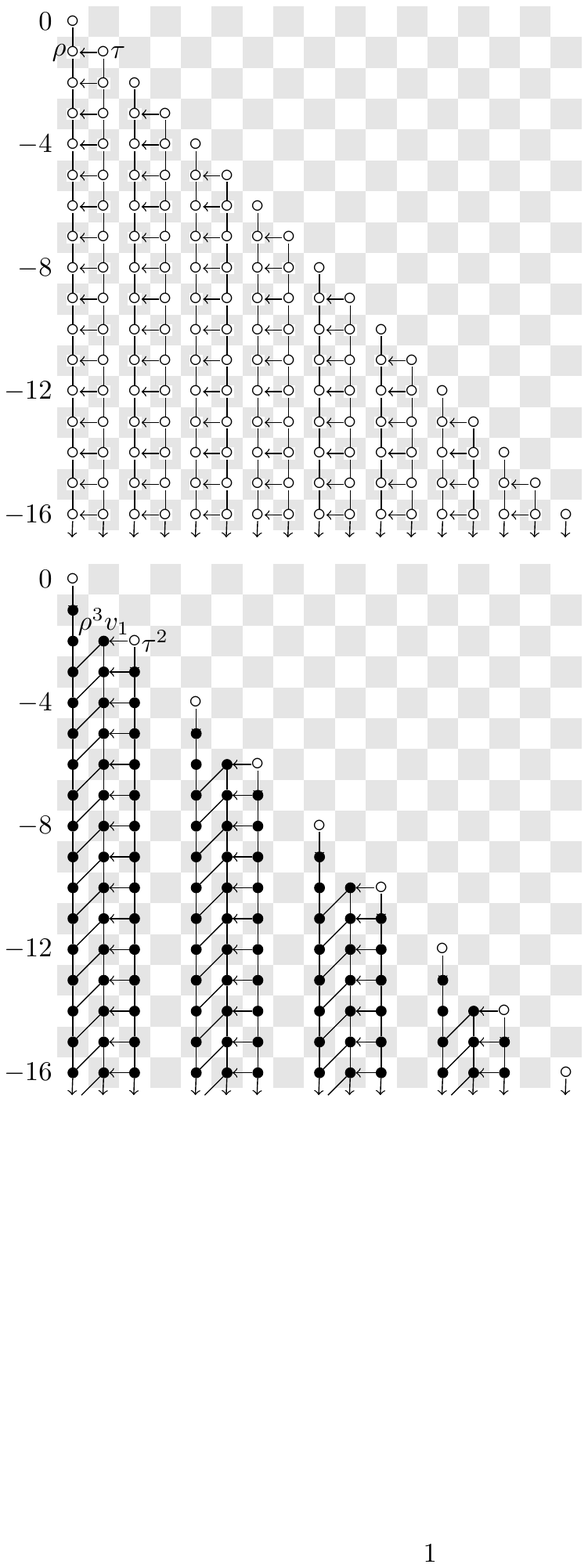} & ~~
&\includegraphics[width=2.2in]{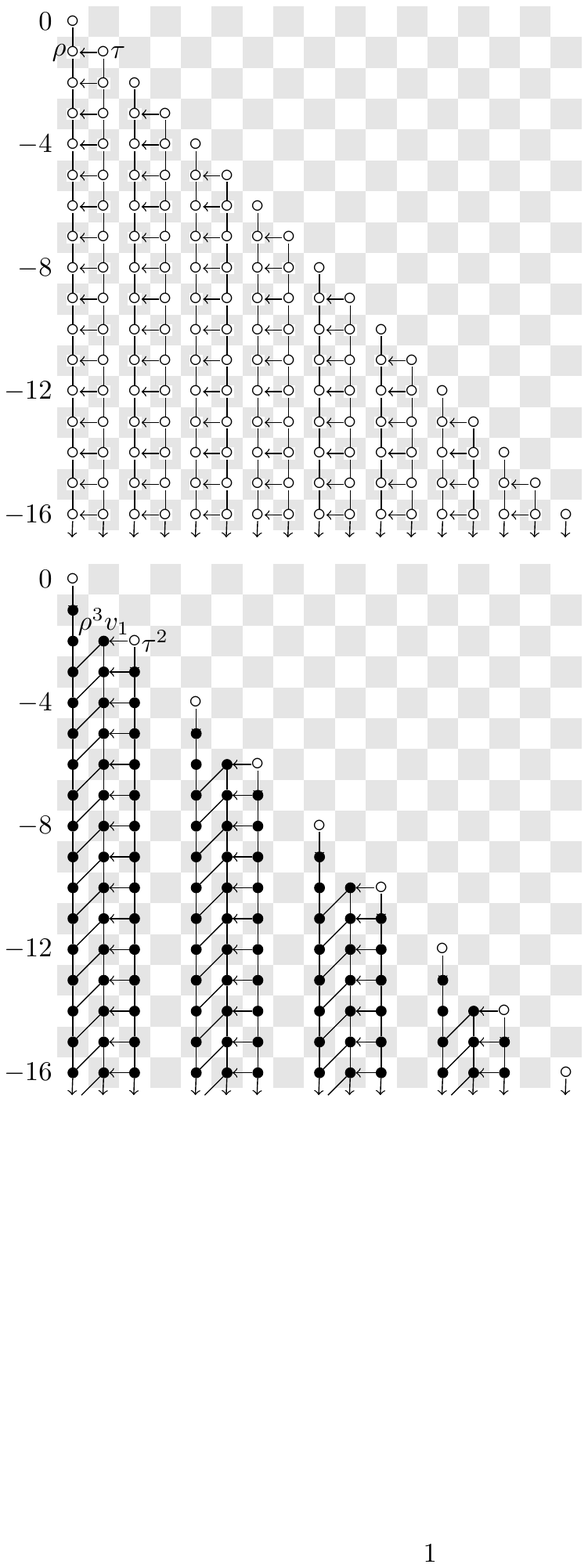}\\
$E_1$ && $E_3$
\end{tabular}

\begin{tabular}{ccc}
\includegraphics[width=2.2in]{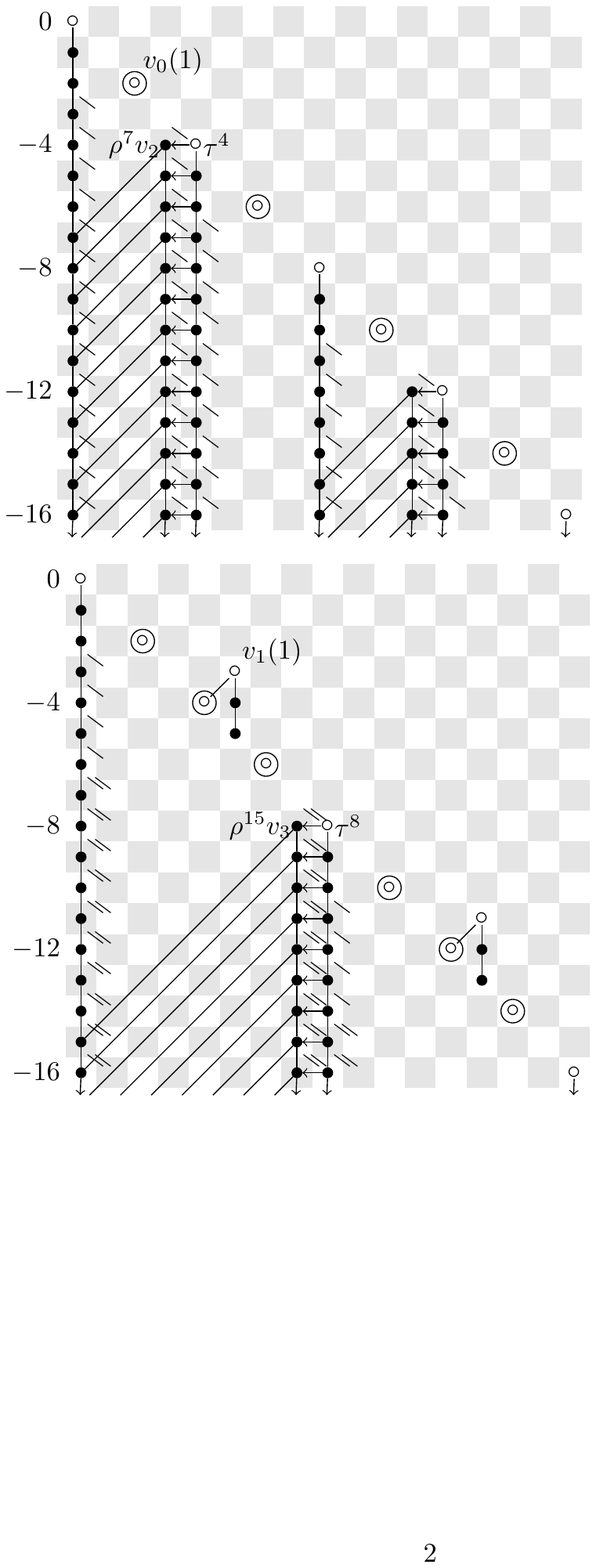} & ~~
& \includegraphics[width=2.2in]{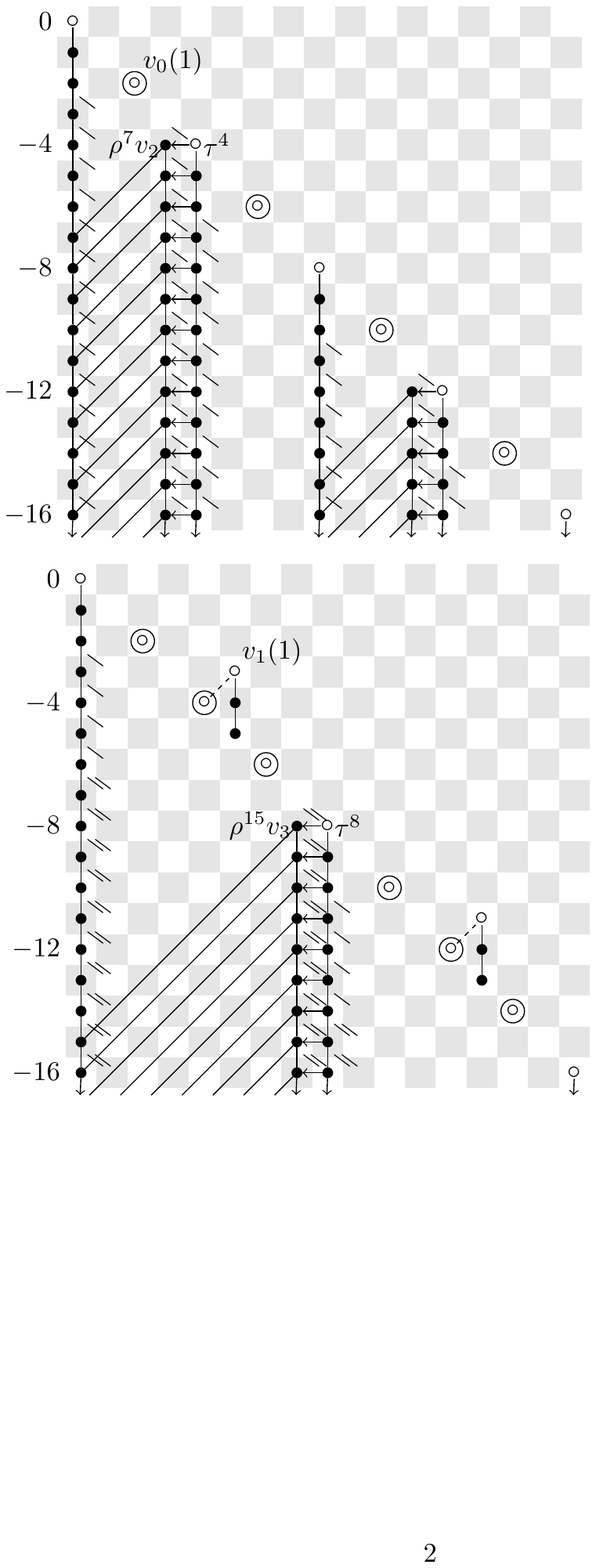}\\
$E_{7}$ && $E_{15}$
\end{tabular}

\includegraphics[width=2.2in]{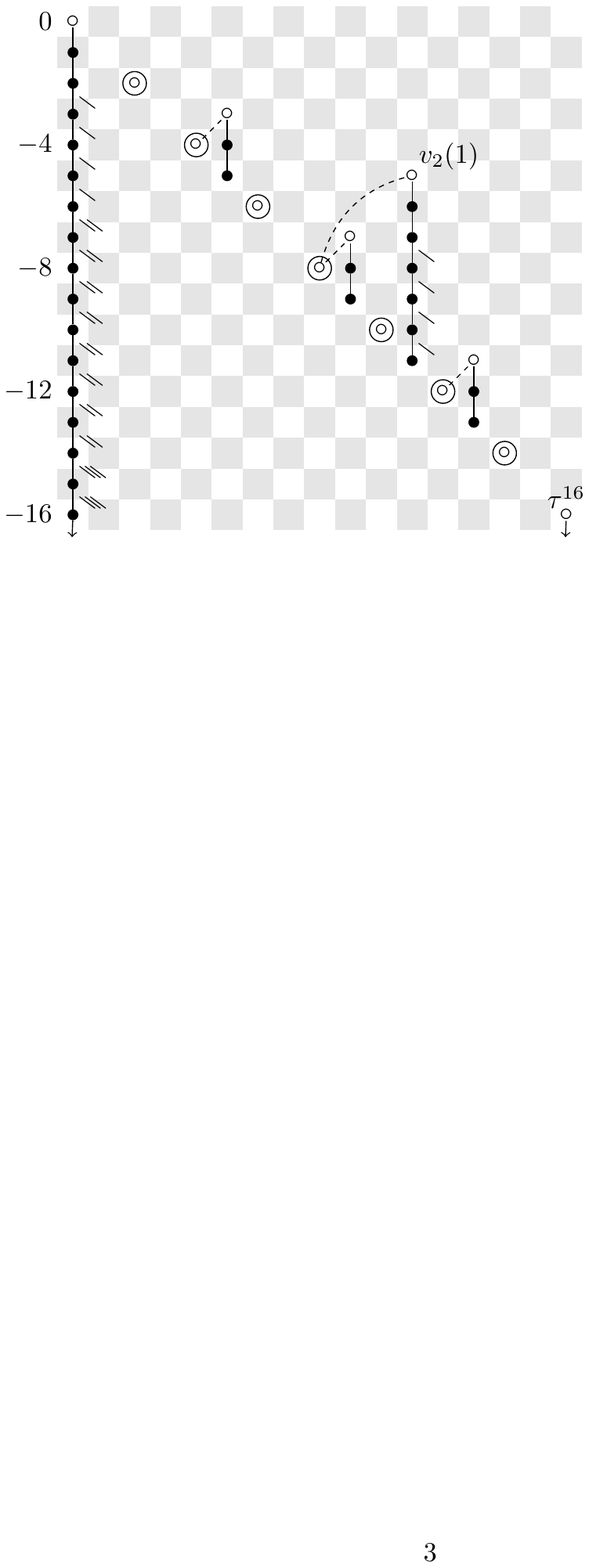}\\
$E_\infty$
\caption{The $\rho$-BSS for $\Ext_{\mathcal{E}(3)}$ over
  $\RR$.}\label{fig:R}
\end{figure}
\end{center}

Recall from \cite[Theorem 5.3, Corollary 5.7]{Hill} the collapse of
the MASS for $\BPn{n}$ over $\RR$.
Combined with the Real truncated Brown-Peterson spectra computations due to Hu in \cite{Hu}, 
appropriately amended,
we arrive at the following result.
\begin{thm}\label{thm:bpnR}
The motivic Adams spectral sequence for $\BPn{n}$ over $\RR$ collapses at $E_2$ and the homotopy 
$\BPn{n}_{\star}$ is given additively by 
\[
\ZZ_2[\rho,\tau^{2^{n+1}},v_i(j) \mid 0\le i \le n, 0\le j]
\]
subject to the relations $v_0(0)=2$, $\rho^{2^{i+1}-1}v_i(j)$, 
$v_i(j)v_k(\ell) = v_i(j+2^{k-i}\ell)v_k(0)$ when $i\le k$, and $v_i(j) = \tau^{2^{n+1}}v_i(j-2^{n-i})$ when $j\ge 2^{n-i}$.  
The degree of $v_i(j)$ is $(2^{i}-1)(1+\alpha)+2^{i+1}j(1-\alpha)$.  (If $n=\infty$ one should read the expression as lacking a $\tau$-power generator and having $v_i(j)$ generators for $0\le i<\infty,~0\le j$.)
\end{thm}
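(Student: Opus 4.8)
The plan is to combine the $E_\infty$-page computation of the $\rho$-Bockstein spectral sequence from Proposition \ref{prop:ExtEnR} with the collapse of the MASS and then resolve the extension problems. First, I would invoke the collapse of the MASS for $\BPn{n}$ over $\RR$ (Hill, \cite[Theorem 5.3, Corollary 5.7]{Hill}), which says that the $E_2$-term
\[
  \Ext_{\mathcal{E}(n)}(\M_\star,\M_\star) = \FF_2[\rho,\tau^{2^{n+1}},v_i(j) \mid 0\le i\le n,\, 0\le j]/(\rho^{2^{i+1}-1}v_i(j))
\]
(subject to the stated relations among the $v_i(j)$ and the $\tau^{2^{n+1}}$-periodicity relation) is already the associated graded of $\BPn{n}_\star$. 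So as a \emph{graded} group, $\BPn{n}_\star$ is abstractly the $\FF_2$-vector space underlying this ring, but we must lift the $\FF_2$-algebra structure to an honest $\ZZ_2$-algebra structure and pin down the additive and multiplicative extensions.

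The key step is the identification of the hidden $2$-extensions. The class $v_0 = v_0(0)$ on the $E_2$-page is detected by $\tau_0$ in the cobar complex and, exactly as over $\CC$ (Theorem \ref{thm:kC}) and by the argument in \cite[Lemma 5.4, Lemma 5.5]{Hill}, it detects multiplication by $2$ in $\pi_{0}\BPn{n} = \ZZ_2$; more precisely $v_0^k$ detects $2^k$. This forces the Adams filtration on $\BPn{n}_\star$ to be the $2$-adic filtration in the appropriate sense, so that each tower of $v_0$-multiples on $E_\infty$ assembles to a copy of $\ZZ_2$ (in the degrees not killed by the $\rho^{2^{i+1}-1}v_i(j)$ relations) or to a cyclic $2$-group when a $v_0$-tower is truncated by such a relation. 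This is precisely where the relation $v_0(0) = 2$ in the statement comes from, and why the remaining relations $\rho^{2^{i+1}-1}v_i(j) = 0$ are inherited without change: the $v_i(j)$ for $i\ge 1$ are genuinely $2$-torsion (they are $v_0$-torsion on $E_\infty$ by Proposition \ref{prop:ExtEnR}), so there is no room for a nontrivial additive extension against them. I would cite \cite[Lemma 5.4]{Hill} for the resolution of the multiplicative extension problems, exactly as invoked in the proof of Theorem \ref{thm:kC}: once $v_0$ detects $2$, the ring structure on $E_\infty$ lifts uniquely.

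Next I would reconcile the degree conventions: on $E_1$ the class $v_i(j)$ is $\tau^{2^{i+1}j}v_i$, so its Adams degree is computed from $|v_i| = (2^i-1)(1+\alpha)$ and $|\tau| = 1-\alpha$, giving total degree $(2^i-1)(1+\alpha) + 2^{i+1}j(1-\alpha)$ after passing to homotopy (dropping the homological coordinate), which matches the displayed formula; the $\tau^{2^{n+1}}$-periodicity relation $v_i(j) = \tau^{2^{n+1}}v_i(j-2^{n-i})$ and the multiplicative relations $v_i(j)v_k(\ell) = v_i(j+2^{k-i}\ell)v_k(0)$ for $i\le k$ are transported verbatim from the $E_\infty$-page since the MASS collapses and the extensions respect them. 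Finally, for $n=\infty$ there is no $\tau^{2^{n+1}}$ generator (one reads $\mathcal{E}(\infty)$ from Definition \ref{defn:En}) and the index $i$ ranges over all nonnegative integers; the same argument applies. The one point requiring care — and the main obstacle — is checking that the Real topological computation of Hu \cite{Hu}, which is stated with a possibly different normalization, agrees with the motivic answer after the appropriate amendment: I would verify this by comparing the two computations degree-by-degree via the Betti realization functor (as in Lemma \ref{lem:33}), using that realization sends $\tau \mapsto 1$, $\rho \mapsto \eta_{\mathrm{top}}$-type classes appropriately, and $v_i(j)\mapsto$ the corresponding Real Brown-Peterson class, so that no additional differentials or extensions are possible beyond those already accounted for.
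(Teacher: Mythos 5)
Your proposal follows the same route as the paper, which gives no formal proof but simply combines Hill's collapse of the MASS over $\RR$ with the $E_2$-computation of Proposition \ref{prop:ExtEnR} and resolves extensions via $v_0$ detecting $2$ (Hill's Lemma 5.4) and comparison with Hu's Real truncated Brown--Peterson computation. Your write-up is a reasonable expansion of that outline, and the degree bookkeeping and the $n=\infty$ caveat are handled correctly.

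One claim in your extension analysis is false and should be repaired: you assert that ``the $v_i(j)$ for $i\ge 1$ are genuinely $2$-torsion (they are $v_0$-torsion on $E_\infty$ by Proposition \ref{prop:ExtEnR}).'' Proposition \ref{prop:ExtEnR} says these classes are $\rho$-torsion (killed by $\rho^{2^{i+1}-1}$), not $v_0$-torsion; indeed $v_0\cdot v_i(j)=v_0(2^ij)v_i(0)\ne 0$, and the theorem you are proving exhibits $\ZZ_2[v_1,\dots,v_n]$ as a polynomial subalgebra in which no $v_i$ is torsion (compare $\pi_*\BPn{n}=\ZZ_{(2)}[v_1,\dots,v_n]$ topologically). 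What you actually need is that the relations $\rho^{2^{i+1}-1}v_i(j)=0$ hold on the nose rather than merely modulo higher Adams filtration; this does not follow from any torsion statement about $v_i(j)$ itself but from the fact that the only candidates in higher filtration in the relevant tridegrees are $v_0$-divisible, and $\rho\,v_0(j)=0$ already on $E_\infty$ (equivalently, from the degree-by-degree match with Hu's computation that you invoke at the end). With that substitution your argument goes through.
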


\begin{rmk}
While his methods are quite different, it should also be noted that Nobuaki Yagita produced similar computations for the homotopy of $\BP/2$ over $\RR$ via the Atiyah-Hirzebruch spectral sequence, cf.~\cite{Yagita}.
\end{rmk}

For the inclusion $i\colon\RR\subset\CC$ recall the identity map on $\BPn{n}_\CC$ induces a comparison map 
\[
\BPn{n}_\RR
\to 
i_{\ast}\BPn{n}_\CC.
\] 
(See Section \ref{sec:Hasse} if the above technology is unfamiliar.)  Applying the complex topological realization functor to $\BPn{n}_\RR$ yields the comparison with the 
topological truncated Brown-Peterson spectrum $\BPn{n}$. 

On homotopy groups, 
see also Proposition \ref{prop:ratlModelHtpy},
the first comparison map is determined by the following result.
\begin{lemma}
\label{RtoCcomparison}
The comparison map for $i\colon\RR\subset\CC$ induces a map between the motivic Adams spectral sequences for $\BPn{n}_\RR$
\[
\Ext_{\mathcal{E}(n)}^{s,m+n\alpha}(\M_{\ast},\M_{\ast})
\Longrightarrow
\pi_{m-s+n\alpha}\BPn{n}_\RR
\]
and for $\BPn{n}_\CC$
\[
\Ext_{\mathcal{E}(n)}^{s,m+n\alpha}(\M_{\ast},\M_{\ast})
\Longrightarrow
\pi_{m-s+n\alpha}\BPn{n}_\CC.
\]
It sends $\rho$ to $0$, $\tau^{2^{n+1}}$ to $\tau^{2^{n+1}}$, and $v_i(j)$ to $\tau^{2^{i+1}j}v_{i}$ for $0\leq i\leq n$.
\end{lemma}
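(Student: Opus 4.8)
The plan is to identify the comparison map $\BPn{n}_\RR \to i_\ast \BPn{n}_\CC$ at the level of $\M_\star$-homology, feed this into the change-of-rings isomorphism of Theorem \ref{thm:E2BPn}, and track what happens through the $\rho$-Bockstein spectral sequence. First I would recall that for a field extension $\RR \subset \CC$ the base change functor $i^\ast$ sends $\BPn{n}_\RR$ to $\BPn{n}_\CC$ (both are built by killing the same regular sequence $v_{n+1}, v_{n+2}, \ldots$ in $\BP_\star$, and base change commutes with these quotient constructions), so the unit of the $(i^\ast, i_\ast)$-adjunction furnishes the comparison map. On homology, $i^\ast$ acts on $\M_\star = \FF_2[\tau,\rho]$ over $\RR$ by sending $\rho \mapsto 0$ and $\tau \mapsto \tau$, since $-1$ is a square in $\CC$. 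Combining this with Theorem \ref{thm:HBPn}, the induced map on $E_2$-terms is the map on $\Ext_{\mathcal{E}(n)}$ groups obtained by base-changing the Hopf algebroid $\mathcal{E}(n)$ over $\RR$ (where $\tau_i^2 = \rho\tau_{i+1}$) to the one over $\CC$ (where $\tau_i^2 = 0$), i.e.\ reducing mod $\rho$.

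Next I would compare the two $\rho$-Bockstein spectral sequences. Both start from $E_1 = \FF_2[\tau,\rho,v_0,\ldots,v_n]$ (over $\RR$ genuinely, over $\CC$ after artificially adjoining a formal variable $\rho$ that is then immediately killed); the map between them on $E_1$ is reduction mod $\rho$. In the $\rho$-BSS over $\RR$, Proposition \ref{prop:ExtEnR} records that $v_i(j)$ is represented on $E_1$ by $\tau^{2^{i+1}j}v_i$ and survives to $E_\infty$, while over $\CC$ the BSS collapses (by Proposition \ref{prop:ExtE1C}, setting $\rho = 0$) with $\tau^{2^{i+1}j}v_i$ a permanent cycle. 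Thus on associated graded the map sends the class $v_i(j)$ to the class of $\tau^{2^{i+1}j}v_i$, sends $\rho$ to $0$ (it is $\rho$-torsion-free to $\rho$-torsion-free, but $\rho$ itself dies), and sends $\tau^{2^{n+1}}$ to $\tau^{2^{n+1}}$. Since both MASS collapse at $E_2$ (Theorem \ref{thm:kC} over $\CC$, and the cited results of Hill over $\RR$), this associated-graded description lifts to the asserted description on homotopy groups, with any ambiguity resolved by multiplicativity and the fact that $v_0$ detects multiplication by $2$ on both sides.

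The main obstacle I anticipate is bookkeeping rather than conceptual: one must make sure that the "formal $\rho$" picture of the $\rho$-BSS over $\CC$ is set up compatibly with the genuine $\rho$-BSS over $\RR$, so that "reduction mod $\rho$" is literally the map of spectral sequences induced by $i^\ast$, and one must check that the representatives $\tau^{2^{i+1}j}v_i$ chosen on $E_1$ over $\RR$ genuinely map to the intended permanent cycles over $\CC$ rather than to something differing by a $\rho$-multiple (which is automatic once $\rho \mapsto 0$, but should be noted). One should also confirm there are no hidden extensions in the map on homotopy groups; this follows because the target $\BPn{n}_\CC$ is a polynomial ring (Theorem \ref{thm:kC}) with no room for such phenomena, and the source is detected faithfully enough by its associated graded via the explicit presentation in Theorem \ref{thm:bpnR}.
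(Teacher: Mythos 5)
Your argument is correct and fills in exactly the details the paper leaves implicit: the paper states Lemma \ref{RtoCcomparison} without proof, treating it as immediate from the naturality of the base-change comparison, the induced map $\rho\mapsto 0$, $\tau\mapsto\tau$ on $\M_\star$, and the collapse of both motivic Adams spectral sequences. Your tracking of the representatives $\tau^{2^{i+1}j}v_i$ through the $\rho$-Bockstein filtration is the intended justification, so there is nothing to correct.
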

\begin{rmk}
Comparing $v_{0}$-towers for the spectral sequences in Lemma \ref{RtoCcomparison} implies the map 
$K_{m}(\RR)\to K_{m}(\CC)$ is an isomorphism when $m\equiv 0\bmod 8$,
the multiplication by $2$ map on $\ZZ_{2}$ when $m\equiv 4\bmod 8$ and trivial otherwise.
\end{rmk}

\begin{lemma}
The complex topological realization functor induces a map between the motivic Adams spectral sequences for $\BPn{n}_\RR$
\[
\Ext_{\mathcal{E}(n)}^{s,m+n\alpha}(\M_{\ast},\M_{\ast})
\Longrightarrow
\pi_{m-s+n\alpha}\BPn{n}_\RR
\]
and the topological $\BPn{n}$ spectrum
\[
\Ext_{\mathcal{E}(n)^{\top}}^{s,t}(\HHH\ZZ/2_{\ast},\HHH\ZZ/2_{\ast})
\Longrightarrow
\pi_{t-s}\BPn{n}.
\]
It sends $\rho$ to $0$, $\tau^{2^{n+1}}$ to $1$, and $v_i(j)$ to $v_{i}$ for $0\leq i\leq n$.
\end{lemma}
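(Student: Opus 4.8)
The plan is to factor the complex topological realization functor over $\RR$ through base change and then invoke the two comparison maps already at our disposal. The complex points of an $\RR$-scheme $X$ coincide with the complex points of $X_\CC$, so the complex realization functor $\mathrm{Re}_\CC \colon \SH(\RR) \to \SH$ factors as $\SH(\RR) \xrightarrow{c^\ast} \SH(\CC) \xrightarrow{\mathrm{Re}_\CC} \SH$, where $c^\ast$ is pullback along $\Spec\CC \to \Spec\RR$. Both functors are symmetric monoidal and exact (preserving cofiber sequences and filtered colimits), and both send $\M$ to the $2$-completion of $\HHH\ZZ/2$: for $c^\ast$ because $\M = \M\ZZ \smsh \one/2$ and $\M\ZZ$ is stable under base change between characteristic-zero fields, and for $\mathrm{Re}_\CC$ by the identification $\mathrm{Re}_\CC\M\ZZ \simeq \HHH\ZZ$. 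Hence each functor carries the $\M$-based Adams tower (\ref{adamstower}) of a motivic spectrum to the corresponding ($\HHH\ZZ/2$- or $\M$-based) Adams tower of its image, and therefore induces a map of Adams spectral sequences.

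First I would check that $c^\ast$ and $\mathrm{Re}_\CC$ each carry $\BPn{n}$ to $\BPn{n}$. Being monoidal and colimit-preserving, both functors commute with the construction of $\MGL$ from Thom spaces of tautological bundles, with the Quillen idempotent splitting off $\BP$ (specified by universal formulae in the Lazard ring that realization respects), and with the iterated-cofiber construction killing the regular sequence $v_{n+1},v_{n+2},\ldots$; in particular each sends $v_i$ to $v_i$. Thus $c^\ast\BPn{n}_\RR \simeq \BPn{n}_\CC$ and $\mathrm{Re}_\CC\BPn{n}_\CC \simeq \BPn{n}$ (topological, $2$-complete). Consequently the map of spectral sequences induced by $\mathrm{Re}_\CC\colon\SH(\RR)\to\SH$ is precisely the composite of the map induced by $c^\ast$ — which, once the comparison map $\BPn{n}_\RR \to i_\ast\BPn{n}_\CC$ is recognized as the unit of the $(c^\ast, i_\ast)$-adjunction, is the content of Lemma \ref{RtoCcomparison} — and the map induced by $\mathrm{Re}_\CC$ over $\CC$, which is Lemma \ref{lem:33}.

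It remains to track the effect on the named classes by composing the two descriptions: by Lemma \ref{RtoCcomparison} the $c^\ast$-map sends $\rho \mapsto 0$, $\tau^{2^{n+1}} \mapsto \tau^{2^{n+1}}$, and $v_i(j) \mapsto \tau^{2^{i+1}j}v_i$; by Lemma \ref{lem:33} the $\mathrm{Re}_\CC$-map over $\CC$ sends $\tau \mapsto 1$ and $v_i \mapsto v_i$; composing gives $\rho \mapsto 0$, $\tau^{2^{n+1}} \mapsto 1$, and $v_i(j) \mapsto v_i$, as claimed. (Alternatively one argues directly: on $E_2$-pages the realization map is induced by the Hopf-algebroid map $\mathcal{E}(n) \to \mathcal{E}(n)^{\top}$ lying over $\M_\star \to \FF_2$ with $\tau\mapsto 1$, $\rho\mapsto 0$; since $v_i(j)$ is represented by $\tau^{2^{i+1}j}\tau_i$ in the cobar complex by Proposition \ref{prop:ExtEnR}, it maps to $\tau_i$, i.e.\ $v_i$, while $\rho \in \M_{-\alpha}$ is forced into $\pi_{-1}\HHH\ZZ/2 = 0$.) The only nontrivial input is the compatibility of both functors with the $\MGL$–$\BP$–$\BPn{n}$ package and with the Adams tower; this is standard and is in any case already baked into Lemmas \ref{lem:33} and \ref{RtoCcomparison}, so the present statement is essentially a bookkeeping corollary of those.
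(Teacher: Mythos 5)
Your argument is correct, and it is essentially the argument the paper intends: the paper states this lemma without proof, having just set up exactly the two ingredients you compose (the $\RR\to\CC$ comparison map of Lemma \ref{RtoCcomparison} and the complex realization over $\CC$ of Lemma \ref{lem:33}), so factoring $\mathrm{Re}_\CC$ through base change to $\CC$ and composing the effects on named classes is the intended bookkeeping. Your parenthetical direct check on $E_2$ via the Hopf algebroid map $\mathcal{E}(n)\to\mathcal{E}(n)^{\top}$ with $\tau\mapsto 1$, $\rho\mapsto 0$ is a nice independent confirmation.
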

\begin{rmk}
The real topological realization functor takes $\kgl_\RR$ to a trivial spectrum.
\end{rmk}

We can identify the weight zero subalgebra of $\pi_{\star}\BPn{1}_\RR$ with the coefficient ring of $2$-completed connective real topological $K$-theory.

\begin{lemma}\label{lemma:ko}
The subalgebra $\pi_{\ast+0\alpha}\BPn{1}_\RR$ is isomorphic to $\pi_{\ast}\ko$.
\end{lemma}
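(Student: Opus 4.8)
The plan is to compute $\pi_{\ast+0\alpha}\BPn{1}_\RR$ by comparing the weight-zero part of the motivic Adams spectral sequence for $\BPn{1}_\RR$ with the classical mod-$2$ Adams spectral sequence for (connective, $2$-complete) real $K$-theory $\ko$. First I would record the classical side: since $H_\ast(\ko;\FF_2) = \mathcal{A}_\ast\cotensor_{\mathcal{A}(1)_\ast}\FF_2$, change-of-rings gives the Adams $E_2$-page
\[
  \Ext_{\mathcal{A}(1)}(\FF_2,\FF_2) = \FF_2[h_0,h_1,u,w]/(h_0 h_1,\ h_1^3,\ h_1 u,\ u^2 - h_0^2 w),
\]
with $h_0,h_1,u,w$ in $(t-s,s)$-bidegrees $(0,1)$, $(1,1)$, $(4,3)$, $(8,4)$, and this spectral sequence collapses at $E_2$.

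Next I would extract the weight-zero part of the MASS for $\BPn{1}_\RR$. By Theorem \ref{thm:E2BPn} its $E_2$-page is $\Ext_{\mathcal{E}(1)}(\M_\star,\M_\star)$ over $\RR$, presented in Proposition \ref{prop:ExtEnR} with $n=1$. Using the relations $v_i(j)v_k(\ell) = v_i(j+2^{k-i}\ell)v_k(0)$ and $v_i(j) = \tau^4 v_i(j-2^{1-i})$, every monomial reduces to a normal form $\rho^{i}\tau^{4j}v_0^{p}\,v_0(1)^{\epsilon}\,v_1^{q}$ with $\epsilon\in\{0,1\}$; imposing weight zero forces $q = i+4j+2\epsilon$, and the relations $\rho v_0 = \rho v_0(1) = \rho^3 v_1 = 0$ then force $i\le 2$, with $p=\epsilon=0$ whenever $i\ge 1$. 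Setting
\[
  h_0 = v_0,\qquad h_1 = \rho v_1,\qquad u = v_0(1)v_1^2,\qquad w = \tau^4 v_1^4,
\]
one checks these occupy bidegrees $(0,1),(1,1),(4,3),(8,4)$, that the weight-zero subalgebra is the free $\FF_2$-module on $h_0^{p}w^{j}$, $h_0^{p}uw^{j}$, $h_1w^{j}$, $h_1^2w^{j}$ ($j,p\ge 0$), and that the relations $h_0 h_1 = \rho v_0 v_1 = 0$, $h_1^3 = \rho^3 v_1^3 = 0$, $h_1 u = \rho v_0(1)v_1^3 = 0$, and $u^2 = v_0(1)^2 v_1^4 = \tau^4 v_0^2 v_1^4 = h_0^2 w$ (using $v_0(1)^2 = v_0(2)v_0(0) = \tau^4 v_0^2$) hold. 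A dimension count in each bidegree then identifies the weight-zero part of the $E_2$-page of the MASS for $\BPn{1}_\RR$, as a bigraded algebra, with $\Ext_{\mathcal{A}(1)}(\FF_2,\FF_2)$.

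To pass to homotopy, recall the MASS for $\BPn{1}_\RR$ collapses at $E_2$ by \cite[Theorem 5.3, Corollary 5.7]{Hill}; since differentials and filtration jumps preserve the weight, the weight-zero part of $E_\infty$ equals that of $E_2$, hence agrees with $E_\infty$ of the Adams spectral sequence for $\ko$. The remaining multiplicative and additive extension problems are resolved in both spectral sequences by the same $v_0$- (respectively $h_0$-) tower bookkeeping used in the proof of Theorem \ref{thm:kC}: infinite $h_0$-towers (on powers of $w$ and on $uw^{j}$) detect $\ZZ_2$ summands, the isolated $h_1$-classes detect $\ZZ/2$ summands, and $u^2 = h_0^2 w$ lifts to the relation $\alpha^2 = 4\beta$ in $\pi_\ast\ko$. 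This yields the desired ring isomorphism $\pi_{\ast+0\alpha}\BPn{1}_\RR \cong \pi_\ast\ko$.

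The main obstacle is the middle step: one must push the monomial reduction of Proposition \ref{prop:ExtEnR} through carefully enough to be certain that the weight-zero subalgebra has \emph{exactly} the four generators $h_0,h_1,u,w$ and \emph{exactly} the four listed relations, since a stray generator or relation would break the comparison with $\Ext_{\mathcal{A}(1)}(\FF_2,\FF_2)$. The extension bookkeeping of the final step is routine; conceptually it reflects the fact that $\BPn{1}_\RR$ realizes $C_2$-equivariantly to a connective, $2$-complete form of Atiyah's Real $K$-theory, whose genuine fixed points are $\ko$ — but the argument above needs no realization functors.
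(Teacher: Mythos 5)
Your proof is correct, and it arrives at exactly the same identification as the paper ($\eta\leftrightarrow\rho v_1$, $\alpha\leftrightarrow v_0(1)v_1^2$, $\beta\leftrightarrow\tau^4 v_1^4$), but it gets there by a different route. The paper's proof is a two-line verification at the level of homotopy rings: it quotes Theorem \ref{thm:bpnR}, which already records $\pi_\star\BPn{1}_\RR=\ZZ_2[\rho,\tau^4,v_0(1),v_1]/(2\rho,\rho^3v_1,v_0(1)^2-4\tau^4)$ with all extensions resolved, and then simply writes down the map from $\pi_*\ko\cong\ZZ_2[\eta,\alpha,\beta]/(2\eta,\eta^3,\eta\alpha,\alpha^2-4\beta)$ onto the weight-zero subalgebra. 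You instead work one level down, extracting the weight-zero part of $\Ext_{\mathcal{E}(1)}$ from Proposition \ref{prop:ExtEnR} via the normal-form reduction (which I have checked: the monomial analysis, the basis $h_0^pw^j$, $h_0^puw^j$, $h_1w^j$, $h_1^2w^j$, and the four relations are all right) and matching it with $\Ext_{\mathcal{A}(1)}(\FF_2,\FF_2)$, then re-resolving the extensions. What your version buys is an explicit isomorphism of $E_2$-pages between the weight-zero MASS and the classical Adams spectral sequence for $\ko$, which is a slightly stronger and more structural statement than the lemma asks for; what it costs is redoing the $v_0(1)^2=4\tau^4$ extension that Theorem \ref{thm:bpnR} (following Hill and Hu) has already settled. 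The only step I would tighten is the final extension bookkeeping: "$u^2=h_0^2w$ lifts to $\alpha^2=4\beta$" a priori only gives $u^2=4w\cdot(\text{unit})$ until you either renormalize $w$ or, more simply, cite the relation $v_0(1)^2-4\tau^4$ from Theorem \ref{thm:bpnR}.
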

\begin{proof}
Recall the ring isomorphism
\[
\pi_{\ast}\ko
\cong
\ZZ_{2}[\eta,\alpha,\beta]/(2\eta,\eta^{3},\eta\alpha,\alpha^{2}-4\beta),
\]
where $\vert\eta\vert=1$, $\vert\alpha\vert=4$ and  $\vert\beta\vert=8$, 
cf.~\cite[Theorem 3.1.26]{Ravenelbook}.
We have
\[
\pi_{\star}\BPn{1}_\RR
= 
\ZZ_2[\rho,\tau^4,v_0(1),v_1]/(2\rho,\rho^3 v_1,v_0(1)^2-4\tau^4).
\]
The assertion follows by mapping $\pi_{\ast}\ko$ into $\pi_{\star}\BPn{1}_\RR$ by sending 
$\eta$ to $\rho v_{1}$, $\alpha$ to $v_0(1) v_{1}^{2}$ and $\beta$ to $\tau^4v_{1}^{4}$.
\end{proof}
\begin{rmk}
The isomorphism $K_{\ast}(\RR)\cong\pi_{\ast}\ko$ was shown by Suslin in \cite{Suslinlocalfields} 
using entirely different methods.
The results in this section generalizes to all real closed fields, 
e.g., the field $\overline{\QQ}\cap\RR$ of real algebraic numbers.
\end{rmk}
\begin{rmk}
Following the reasoning after the proof of Lemma 5.4 in \cite{Hill},
we can explain Lemma \ref{lemma:ko} by considering the realification
functor $t$ from $\PP^1$-spectra over $\RR$ to $\ZZ/2$-equivariant
spectra.  It should be the case that $t(\BPn{n}_\RR) = \BP\RR\langle
n\rangle$, where the spectrum $\BP\RR\langle n\rangle$ is the Real
truncated Brown-Peterson spectrum of Po Hu
\cite{Hu}.  This then induces a map
\[
  \pi_{*+0\alpha}\BPn{n}_\RR \to \pi_{*+0\sigma}\BP\RR\langle n\rangle
\]
where $\sigma$ is the sign representation of $\ZZ/2$ and we are
working with $RO(\ZZ/2)$-graded homotopy groups.  Now the target of
this map is easily identified with $\pi_* \BP\RR\langle
n\rangle^{\ZZ/2}$ and Hu shows in \cite{Hu} that
$\BP\RR\langle 1\rangle^{\ZZ/2} \simeq \ko$.  Hence the above map induces the
isomorphism of Lemma \ref{lemma:ko}
\end{rmk}

\subsection{Non-Archimedean places}
Let $p$ be an odd prime number.  In \cite{Ormsby}, the first author
determines the behavior of the $\rho$-Bockstein and motivic Adams
spectral sequences for $\BPn{n}$ over $\QQ_p$.  The differentials
observed are quite similar to those for $\BPn{n}$ over $\RR$, but the
$\rho$-BSS always collapses at $E_2$ and there are (infinitely many)
nontrivial differentials in the MASS.

Recall that
\[
  k^M_*(\QQ_p) =
  \begin{cases}
  \FF_2[u,p]/(u^2,p^2) &\text{if }p\equiv 1\pod{4},\\
  \FF_2[u,p]/(u^2,p(u-p)) &\text{if }p\equiv 3\pod{4},
  \end{cases}
\]
where $u$ is a nonsquare in the Teichm\"uller lift $\FF_p^\times\subset \QQ_p^\times$, 
and $p$ is the class of the uniformizer $p$.  
If $p\equiv 3\pod{4}$ we choose $u$ to be the class of $\rho$, the
class of $-1$, while $\rho=0$ when $p\equiv 1\pod{4}$.  
Recall that $\M_\star = k^M_*[\tau]$.

The following is the main result of \cite[\S 4]{Ormsby}.

\begin{thm}
If $p\equiv 1\pod{4}$ then the $\rho$-BSS for $\Ext_{\mathcal{E}(n)}$ over $\QQ_p$ collapses and
\[
  \Ext_{\mathcal{E}(n)}(\M_\star,\M_\star) = \M_\star[v_0,\ldots,v_n].
\]

If $p\equiv 3\pod{4}$ then the $\rho$-BSS for $\Ext_{\mathcal{E}(n)}$ is determined by the differential
\[
  d_1\tau = \rho v_0
\]
and
\[
\Ext_{\mathcal{E}(n)}(\M_\star,\M_\star) 
=
k^M_*[\tau^2,v_0,\ldots,v_n]/\rho v_0 \oplus \rho\tau k^M_*[\tau^2,v_0,\ldots,v_n].
\]
\end{thm}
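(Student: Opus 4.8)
The statement splits according to the congruence of $p$ modulo $4$, and in both cases the behaviour is dictated by the image of $\rho=[-1]$ in $k^M_1(\QQ_p)$: the only features of $\mathcal{E}(n)$ over $\QQ_p$ not already present in the topological $\mathcal{E}(n)^\top$ are the relations $\tau_i^2=\rho\tau_{i+1}$ and the twisted right unit $\eta_R(\tau)=\tau+\rho\tau_0$. The plan is to run the $\rho$-BSS of Proposition~\ref{prop:ExtEnR}, whose $E_1$-page is $\M_\star[v_0,\ldots,v_n]=k^M_*(\QQ_p)[\tau,v_0,\ldots,v_n]$ with $v_i$ represented by $\tau_i$, and to pin down its differentials by naturality along the base-change map of Hopf algebroids $\mathcal{E}(n)_\RR\to\mathcal{E}(n)_{\QQ_p}$ induced by $\M_{\star,\RR}=\FF_2[\rho,\tau]\to k^M_*(\QQ_p)[\tau]=\M_{\star,\QQ_p}$.

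When $p\equiv 1\pmod 4$, $-1$ is a square, so $\rho=0$ in $\M_\star$; the relations reduce to $\tau_i^2=0$, the left and right units agree, and $\mathcal{E}(n)_{\QQ_p}$ is the base change of $\mathcal{E}(n)^\top$ along the flat map $\FF_2\to\M_\star$, exactly as over $\CC$ in Proposition~\ref{prop:ExtE1C}. The cobar complex then base changes, giving $\Ext_{\mathcal{E}(n)}(\M_\star,\M_\star)=\M_\star\otimes_{\FF_2}\Ext_{\mathcal{E}(n)^\top}(\FF_2,\FF_2)=\M_\star[v_0,\ldots,v_n]$, and since the $\rho$-BSS converges to this group with an $E_1$-page of the same size in each degree, it collapses.

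When $p\equiv 3\pmod 4$ we have $\rho\ne 0$ but $\rho^2=\{-1,-1\}=0$ in $k^M_*(\QQ_p)$, since the Hilbert symbol $(-1,-1)_p$ is trivial for odd $p$. From $\eta_R(\tau)=\tau+\rho\tau_0$ one reads off the nonzero differential $d_1\tau=\rho v_0$; and since $d_1$ is linear over the subring of permanent cycles $S\defined k^M_*(\QQ_p)[v_0,\ldots,v_n]$, every differential is determined by its values on powers of $\tau$, which by naturality match the real pattern $d_{2^{i+1}-1}\tau^{2^i}=\rho^{2^{i+1}-1}v_i$, $0\le i\le n$. For $i\ge 1$ the $\rho$-exponent exceeds $2$, so the target vanishes; thus $d_1$ is the only nontrivial differential and $E_\infty=E_2=H(E_1,d_1)$. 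With $E_1=S[\tau]$ and $d_1(\tau^j)=j\rho v_0\tau^{j-1}$ (in characteristic $2$: zero on even powers, $\tau^{2k+1}\mapsto\rho v_0\tau^{2k}$), and using that $v_0$ is a non-zero-divisor in $S$ and $\operatorname{Ann}_{k^M_*(\QQ_p)}(\rho)=\rho\cdot k^M_*(\QQ_p)$ (which follows from $\rho^2=0$ by a dimension count), one computes
\[
  H(E_1,d_1)=\Bigl(\bigoplus_{k\ge 0}(S/\rho v_0 S)\,\tau^{2k}\Bigr)\oplus\Bigl(\bigoplus_{k\ge 0}\rho S\,\tau^{2k+1}\Bigr),
\]
which is precisely $k^M_*(\QQ_p)[\tau^2,v_0,\ldots,v_n]/\rho v_0\oplus\rho\tau\,k^M_*(\QQ_p)[\tau^2,v_0,\ldots,v_n]$. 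The second summand lies in odd $\tau$-parity and is the square-zero ideal generated by $\rho\tau$ (as $\rho^2\tau^2=0$), so there are no hidden $\rho$-extensions and $E_\infty$ is the answer on the nose.

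The homology computation $H(E_1,d_1)$ is routine linear algebra over $k^M_*(\QQ_p)$; the main obstacle is the claim that $d_1\tau=\rho v_0$ is the \emph{only} nontrivial differential. I would establish this via the naturality square with the real $\rho$-BSS (covering all classes coming from $\FF_2[\rho,\tau,v_0,\ldots,v_n]$) together with the observations that $p\in k^M_*(\QQ_p)$ is primitive and the $v_i$ already survive over $\RR$, hence all are permanent cycles; the vanishing $\rho^2=0$ then forces collapse. Confirming the absence of hidden extensions is a secondary point, made immediate by the sparsity of the collapsed spectral sequence.
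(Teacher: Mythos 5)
Your argument is correct. Note that the paper itself gives no proof of this statement --- it is quoted as the main result of \cite[\S 4]{Ormsby} --- so there is nothing in-text to compare against; your proof is the standard $\rho$-Bockstein argument that the cited reference (and the analogous real-case computation in Proposition \ref{prop:ExtEnR}) uses: the cobar/right-unit computation $d_1\tau=\rho v_0$, the vanishing of the higher targets $\rho^{2^{i+1}-1}v_i$ because $\rho^2=0$ in $k^M_*(\QQ_p)$ for $p\equiv 3 \bmod 4$ (indeed all of $E_1$ in $\rho$-filtration $\ge 2$ vanishes, forcing collapse at $E_2$), and the elementary homology computation of $(k^M_*[\tau,v_0,\ldots,v_n],\,d_1)$ using $\operatorname{Ann}(\rho)=\rho k^M_*$. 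The one cosmetic caveat is that your ``naturality with $\RR$'' is purely algebraic base change of Hopf algebroids along $\FF_2[\rho,\tau]\to\M_{\star,\QQ_p}$ (there is no map of fields $\RR\to\QQ_p$), but as you only use it to transport the cobar formulas for $d\tau^{2^i}$, which hold verbatim over any base, this is harmless.
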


Let $\varepsilon(p) = \nu_2(p-1)$ and $\lambda(p) = \nu_2(p^2-1)$ where $\nu_2$ is the 2-adic valuation.  
Then \cite[\S 5]{Ormsby} shows that the MASS for $\BPn{n}$ over $\QQ_p$ takes the following form.

\begin{thm}\label{thm:BPnQp}
If $p\equiv 1 \pod{4}$, 
then the MASS for $\BPn{n}$ over $\QQ_p$ is determined by differentials
\[
d_{\varepsilon(p)+i}\tau^{2^i} 
= 
u\tau^{2^i-1}v_0^{\varepsilon(p)+i}.
\]

If $p\equiv 3\pod{4}$, then the MASS for $\BPn{n}$ over $\QQ_p$ is
determined by
\[
d_{\lambda(p)-1+i}\tau^{2^i} 
= 
\rho\tau^{2^i-1}v_0^{\lambda(p)-1+i}.
\]
\end{thm}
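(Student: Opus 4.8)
The plan is to run the motivic Adams spectral sequence whose $E_2$-term is $\Ext_{\mathcal{E}(n)}(\M_\star,\M_\star)$ as computed in the previous theorem (via the $\rho$-BSS), and to chase the differentials class by class. First I would recall the input: when $p\equiv 1\pod 4$ the $E_2$-page is the polynomial algebra $\M_\star[v_0,\ldots,v_n] = k^M_*(\QQ_p)[\tau,v_0,\ldots,v_n]$ with $k^M_*(\QQ_p) = \FF_2[u,p]/(u^2,p^2)$ and $\rho = 0$; when $p\equiv 3\pod 4$ the $E_2$-page is $k^M_*[\tau^2,v_0,\ldots,v_n]/\rho v_0 \oplus \rho\tau k^M_*[\tau^2,v_0,\ldots,v_n]$ with $\rho = u$. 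In both cases the only possible nonzero differentials must decrease the $\tau$-power (differentials on $E_2$ kill $\tau^{2^i}$ first, compatibly with the multiplicative structure) and land on classes involving a power of $v_0$, because these are the only elements of higher Adams filtration in the appropriate tridegree. The essential claim is that the first such differential is on $\tau^{2^i}$ for each $i\ge 0$, with target as stated, and then Leibniz propagates everything.

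The key steps, in order: (1) Establish the $i=0$ differential on $\tau$. For $p\equiv 1\pod 4$, I would argue $d_{\varepsilon(p)}\tau = u v_0^{\varepsilon(p)}$ by comparison — either with the $\KGL$/algebraic $K$-theory computation over $\QQ_p$ (Bökstedt–Madsen-type or the known $K_*(\QQ_p;\ZZ_2)$), which pins down $\pi_\star\BPn{1}_{\QQ_p}$, or directly from the structure of $\mathcal{E}(n)$ and the fact that $\eta_R(\tau) = \tau + \rho\tau_0$ together with the defining relations forces this as the first nonzero Massey-product-type differential of length exactly $\varepsilon(p) = \nu_2(p-1)$, detecting the order of the relevant root of unity / the $2$-primary part of $\mu(\QQ_p)$. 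For $p\equiv 3\pod 4$ one has already used up a $\rho$-BSS differential $d_1\tau = \rho v_0$, so the surviving class is $\tau^2$ and the analogous length-$(\lambda(p)-1)$ MASS differential on $\tau^2$ detects $\nu_2(p^2-1)$; here $\tau$ itself is not a permanent cycle in the $\rho$-BSS but $\rho\tau$ survives, and I would track the differential on $\rho\tau\cdot(\text{stuff})$ separately. (2) Propagate by the Leibniz rule: since $\tau^{2^{i+1}} = (\tau^{2^i})^2$ and $d_r$ is a derivation, once $\tau^{2^i}$ supports $d_{r_i}$ one computes $d_{2r_i}(\tau^{2^{i+1}}) = 0$ on that page but the class $\tau^{2^{i+1}}$ survives to a later page where a genuinely new differential of length $r_i + 1$ appears — this is the standard ``$v_0$-Bockstein'' pattern, and I would verify the page numbers $\varepsilon(p)+i$ (resp. $\lambda(p)-1+i$) by an induction identical in form to the $\RR$ case (Proposition \ref{prop:ExtEnR} and Hill's analysis), noting that the extra $i$ comes exactly from squaring $i$ times. (3) Check that no other classes support differentials: every element of $E_2$ is of the form $\tau^a v_0^{b}\cdots$ times a Milnor $K$-theory class, and the $\tau$-power part is governed entirely by steps (1)–(2) while the $v_0^b$ and $v_1,\ldots,v_n$ and $u,p$ factors are permanent cycles (the $v_i$ because they come from topology/$\CC$ where the MASS collapses, $u$ and $p$ because they lie in $k^M_*$ which consists of permanent cycles of filtration $0$). (4) Conclude the $E_\infty$-page and hence, after resolving the (purely $v_0$-driven, hence standard) multiplicative extensions, the homotopy groups.

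The main obstacle is step (1): pinning down the \emph{exact length} of the first differential on $\tau$ (resp. $\tau^2$) as $\varepsilon(p) = \nu_2(p-1)$ (resp. $\lambda(p)-1 = \nu_2(p^2-1)-1$). Everything else is bookkeeping propagated by the Leibniz rule from this seed. I expect the cleanest route is comparison with a known computation — the $2$-complete algebraic $K$-theory of $\QQ_p$ is classical (it sees $K_*(\FF_p)$ plus a $\ZZ_2$ in each degree $\equiv 1 \pmod 2$ controlled by the $2$-adic cyclotomic character, whose relevant valuation is precisely $\nu_2(p-1)$ or $\nu_2(p^2-1)$), which forces the MASS for $\BPn{1}_{\QQ_p}$ to have this differential, and then the $\BP$-module structure (all $\BPn{n}$ receive maps from $\BP$ and map to $\BPn{1}$ compatibly) forces the same differential in all $\BPn{n}$. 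One must also check that no differential of length strictly between the $\rho$-BSS contributions and $\varepsilon(p)$ (resp. $\lambda(p)-1$) can be nonzero, which is a sparseness argument in the tridegree: the target of a hypothetical shorter differential on $\tau^{2^i}$ would be $\tau^{2^i-1}v_0^{r}$ with $r<\varepsilon(p)+i$, and one rules these out by the comparison just mentioned together with the observation that no room exists for them to be hit. Since this is precisely the content of \cite[\S 4,\S 5]{Ormsby}, I would cite that reference for the detailed verification and simply record the resulting differentials here.
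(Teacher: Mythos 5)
The paper does not actually prove Theorem \ref{thm:BPnQp}: it is a restatement of results from the first author's earlier work, and the text simply cites \cite{Ormsby} (\S 4 for the $\rho$-BSS input, \S 5 for the MASS differentials) --- which is exactly where your proposal also terminates. So in the end you take the same approach as the paper, and your sketch of what the citation contains is essentially right: everything is seeded by a single differential on the lowest surviving power of $\tau$, whose length is pinned down by comparison with a known computation, and then propagated by squaring with the page number increasing by one per squaring. A few soft spots are worth flagging. (i) The comparison the paper itself uses for the parallel $\QQ_2$ theorem is with the $n=0$ case, i.e.\ with $(\M\ZZ_2)_\star$ read off from the Galois cohomology of the local field, after which the linearization map $\BPn{n}\to\BPn{0}$ and tri-degree considerations identify the MASS for $\BPn{n}$ with $E_*(\BPn{0})[v_1,\ldots,v_n]$; your $K$-theory comparison at $n=1$ also works but imports a heavier input, and one should be careful not to make it circular given Remark \ref{rmk:KQ}. (ii) Permanence of $v_1,\ldots,v_n$ does not follow from base change to $\CC$ (that only controls their \emph{images}); it follows because each $v_i$ is realized by an honest homotopy class in $\pi_\star\BP$ acting on the $\BP$-module $\BPn{n}$, or from the tri-degree argument just mentioned. (iii) The squaring induction is not ``identical in form'' to the $\rho$-BSS over $\RR$, where the pages run $1,3,7,\ldots,2^{i+1}-1$ and the targets involve different $v_i$'s; here the targets stack powers of $v_0$ and the page increases by exactly one per squaring. (iv) Minor off-by-one: for $p\equiv 3\pmod 4$ the first MASS differential is $d_{\lambda(p)}\tau^2=\rho\tau v_0^{\lambda(p)}$ (the $i=1$ instance of the stated formula), not a length-$(\lambda(p)-1)$ differential; this is consistent with the $2$-torsion of order $2^{\nu_2(p^2-1)}$ you correctly identify as the thing being detected. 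None of these affect the overall correctness of your outline.
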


For a description of the $E_\infty$ term see \cite[Theorem 5.7]{Ormsby}.

Set $k = \varepsilon(p)$ if $p\equiv 1\pod{4}$ and set $k =\lambda(p)$ if $p\equiv 3 \pod{4}$.  
The MASS for $\BPn{n}$ over $\QQ_p$ is depicted graphically in Figure
\ref{fig:Qp} using the same conventions as Figure \ref{fig:R} (see
Remark \ref{rmk:graph1}) with the
addition that $\circ/v_0^r = \FF_2[v_0,\ldots,v_n]/v_0^r$ and that
Adams spectral sequences no longer have a $\rho$-filtration grading.  
(That said, the $E_k$ page is actually the $\rho$-BSS if $k=1$.)

\begin{center}
\begin{figure}
\begin{tabular}{cc}
\includegraphics[width=1.8in]{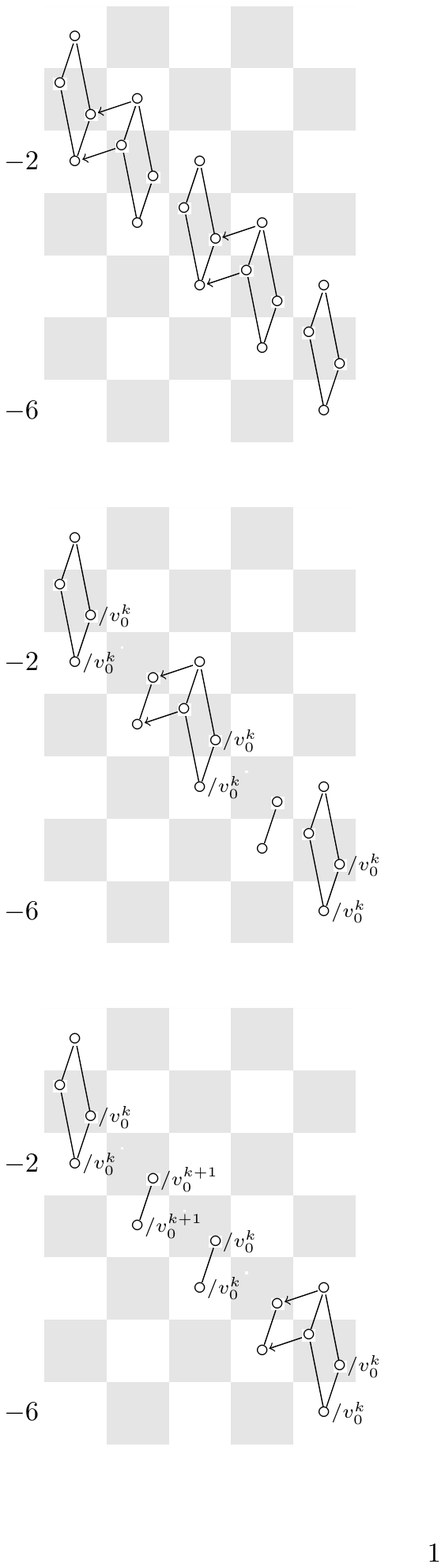}
&
\includegraphics[width=1.8in]{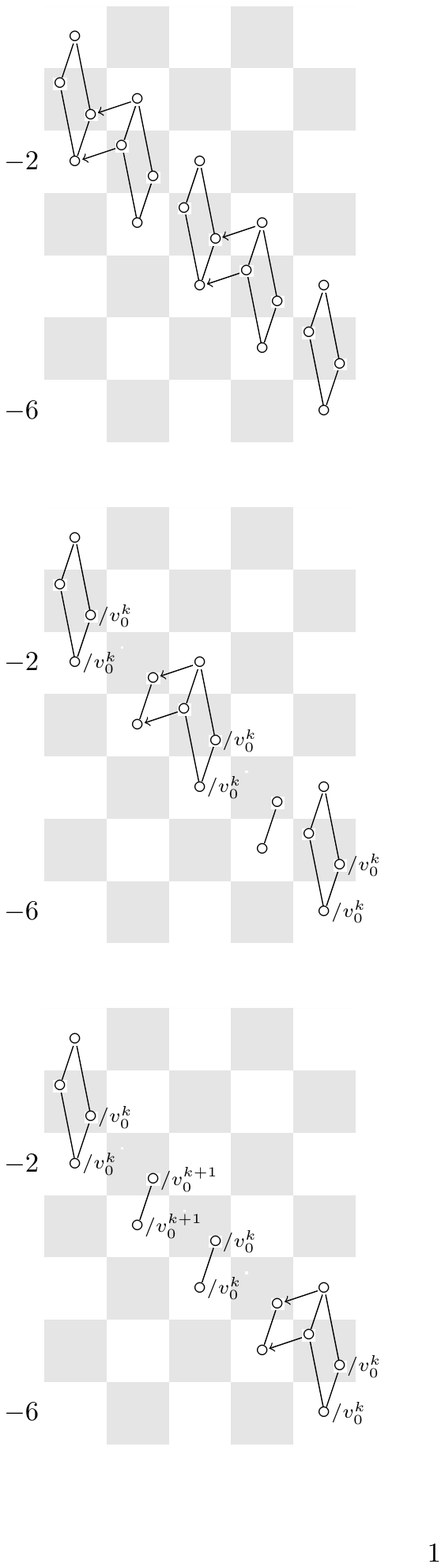}

\\

$E_{k}$	&$E_{k+1}$	\\

\includegraphics[width=1.8in]{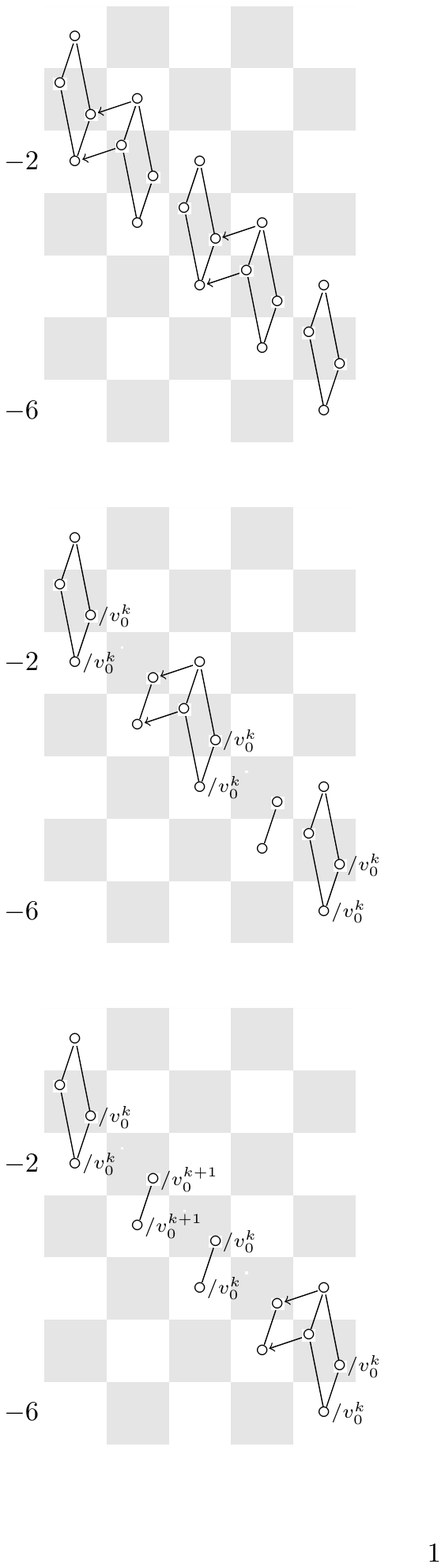}
&
\includegraphics[width=1.8in]{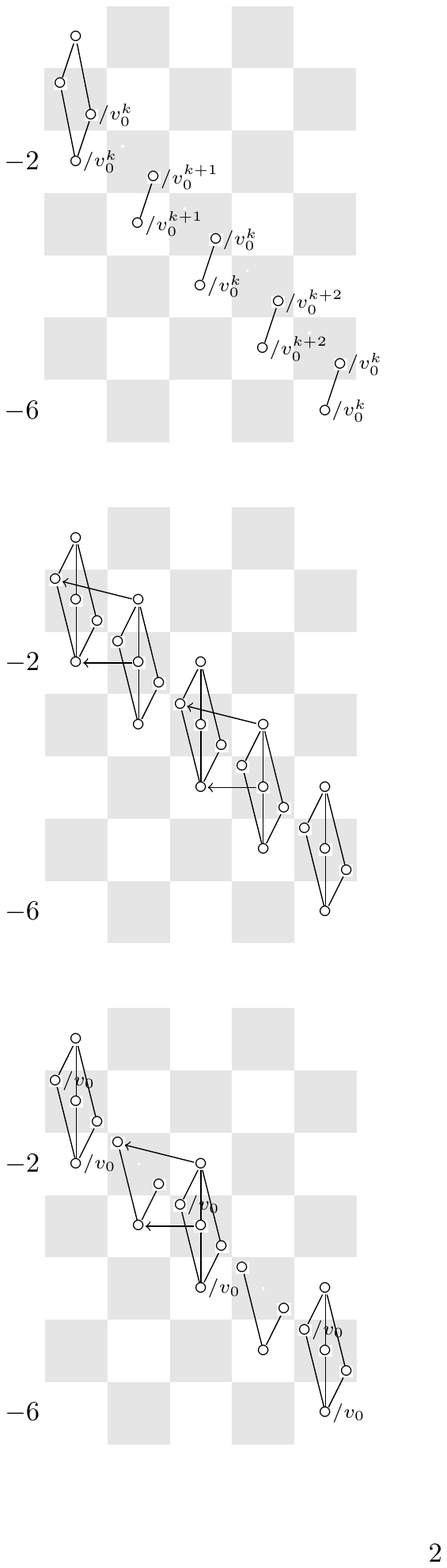} \\

$E_{k+2}$	&$E_{k+3}$\\
\end{tabular}
\caption{The MASS for $\BPn{n}$ over $\QQ_p$, $p>2$.}\label{fig:Qp}
\end{figure}
\end{center}

We now turn our attention to the field $\QQ_2$.  The first mod 2
Milnor $K$-theory of $\QQ_2$ is generated by the classes of $-1$, $2$,
and $5$, which we denote $\rho$, $x$, and $y$, respectively.  Then
\[
  k^M_*(\QQ_2) = \FF_2[\rho,x,y]/(\rho^3,x^2,y^2,\rho^2+xy,\rho x,\rho
  y).
\]

\begin{thm}
The $\rho$-BSS for $\Ext_{\mathcal{E}(n)}$ over $\QQ_2$ is determined
by the differential
\[
  d_1\tau = \rho v_0.
\]
\end{thm}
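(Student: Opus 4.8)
The plan is to run the argument as in the real and odd $p$-adic cases (Proposition~\ref{prop:ExtEnR} and \cite{Ormsby}), with the two special features of $k^M_*(\QQ_2)$ --- namely $\rho^3 = 0$ and $\rho x = \rho y = 0$ --- doing all of the new work. First I would pin down the $E_1$-page. The $\rho$-BSS is the filtration spectral sequence of \cite[Theorem A 1.3.9]{Ravenelbook} associated to the $\rho$-adic filtration $F^p\M_\star = \rho^p\M_\star$ on the cobar complex of $\mathcal{E}(n)$; since $\M_\star = k^M_*(\QQ_2)[\tau]$ and $\rho^3 = 0$, this filtration satisfies $F^3\M_\star = 0$. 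In the associated graded Hopf algebroid one has $\tau_i^2 \equiv 0$ and $\eta_R(\tau) \equiv \tau$, so $\mathrm{gr}\,\mathcal{E}(n)$ is the exterior algebra on $\tau_0,\dots,\tau_n$ over $\mathrm{gr}\,\M_\star = \FF_2[\rho,x,y,\tau]/(\rho^3,x^2,y^2,xy,\rho x,\rho y)$ with each $\tau_i$ primitive; its cohomology is the evident base change of the complex answer of Proposition~\ref{prop:ExtE1C}, namely $E_1 = \mathrm{gr}\,\M_\star[v_0,\dots,v_n]$ with $v_i$ the class of $\tau_i$.

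Next comes the differential. From $\eta_R(\tau) = \tau + \rho\tau_0$ I would read off $d_1\tau = \rho v_0$. The remaining $\FF_2$-algebra generators of $E_1$ are permanent cycles: $\rho, x, y$ because mod-$2$ Milnor $K$-theory consists of primitive elements, and $v_0,\dots,v_n$ because $\tau_0,\dots,\tau_n$ are primitive in $\mathcal{E}(n)$ (the $\xi_i$ being absent from $\mathcal{E}(n)$). The Leibniz rule then determines $d_1$, and a routine computation of $H(E_1, d_1)$ shows that $E_2$ is generated as an $\FF_2$-algebra by $\rho, x, y, v_0,\dots,v_n$ together with $\tau^2$ and the twisted classes $x\tau$, $y\tau$, and $\rho^2\tau$.

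It then remains to show the spectral sequence collapses at $E_2$. Because $F^3\M_\star = 0$, all differentials $d_r$ with $r \ge 3$ vanish automatically, so I only need $d_2 = 0$; by multiplicativity it suffices that each of the algebra generators of $E_2$ listed above be a permanent cycle. The classes $\rho, x, y, v_i$ already are. For the others one computes in the cobar complex, using Frobenius in characteristic $2$,
\[
  d(\tau^{2^i}) = (\tau + \rho\tau_0)^{2^i} - \tau^{2^i} = \rho^{2^i}\tau_0^{2^i},
\]
and since $\tau_i^2 = \rho\tau_{i+1}$ in $\mathcal{E}(n)$ (with $\tau_n^2 = 0$) the right-hand side is divisible by $\rho^3$ whenever $i \ge 1$, hence vanishes over $\QQ_2$; likewise $d(x\tau) = \rho x\tau_0 = 0$, $d(y\tau) = \rho y\tau_0 = 0$, and $d(\rho^2\tau) = \rho^3\tau_0 = 0$, using $\rho x = \rho y = \rho^3 = 0$. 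Therefore $E_2 = E_\infty$ and the $\rho$-BSS is determined by $d_1\tau = \rho v_0$ alone.

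I expect the main obstacle to be the bookkeeping in the two previous paragraphs: correctly computing $H(E_1,d_1)$ and verifying that $\{\rho, x, y, v_0,\dots,v_n, \tau^2, x\tau, y\tau, \rho^2\tau\}$ really generates $E_2$ as an algebra. It is precisely the presence of the twisted generators $x\tau$, $y\tau$, $\rho^2\tau$ --- which have no analogue over $\RR$ or over $\QQ_p$ with $p > 2$ --- that makes collapse at $E_2$ a statement requiring proof rather than a formality; by contrast, the vanishing of the higher differentials $d_{2^{i+1}-1}\tau^{2^i} = \rho^{2^{i+1}-1}v_i$ seen over $\RR$ is immediate the moment one notes $\rho^3 = 0$.
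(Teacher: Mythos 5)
Your proposal is correct and follows essentially the same route as the paper's (much terser) proof: read off $d_1\tau=\rho v_0$ from $\eta_R(\tau)=\tau+\rho\tau_0$, and observe that the higher differentials following the real pattern $d_{2^{i+1}-1}\tau^{2^i}=\rho^{2^{i+1}-1}v_i$ all die because $\rho^3=0$ in $k^M_*(\QQ_2)$. Your extra bookkeeping --- noting that $F^3=0$ kills $d_{r\ge 3}$ outright and checking $d_2$ on the twisted generators $x\tau$, $y\tau$, $\rho^2\tau$ using $\rho x=\rho y=0$ --- is a welcome elaboration of a step the paper leaves implicit, but it is the same argument.
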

\begin{proof}
We compute $d_1\tau$ as $\eta_L(\tau) - \eta_R(\tau) = \rho v_0$.
Further differentials follow the pattern of Proposition
\ref{prop:ExtEnR}, but $\rho^3 = 0$ in $k^M_*(\QQ_2)$ so the spectral
sequence collapses at $E_2$.
\end{proof}

\begin{thm}
The MASS for $\BPn{n}$ over $\QQ_2$ is determined by the differentials
\[
  d_{2+i}\tau^{2^i} = x\tau^{2^i-1}v_0^{2+i}
\]
for $1\le i$.
\end{thm}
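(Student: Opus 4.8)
The goal is to pin down the MASS differentials for $\BPn{n}$ over $\QQ_2$, given that the $E_2$-page is $\Ext_{\mathcal{E}(n)}(\M_\star,\M_\star)$ as computed by the collapsed $\rho$-BSS in the previous theorem.  My approach would be to exploit the base change map $\QQ_2 \hookrightarrow \QQ_2^{\mathrm{ur}}$ (or more simply the extension realizing the residue field $\FF_2$, i.e.\ comparison with the unramified situation), together with the structure of $k^M_*(\QQ_2)$, to reduce the problem to a single $d_{2+i}$ computation on the $\tau^{2^i}$-towers and then propagate by multiplicativity and the $\rho$-BSS machinery.  The first step is to assemble the $E_2$-page explicitly: from $d_1\tau = \rho v_0$ one gets $\Ext_{\mathcal{E}(n)}$ over $\QQ_2$ as (roughly) $k^M_*[\tau^2,v_0,\dots,v_n]/(\rho v_0) \oplus \rho\tau\, k^M_*[\tau^2,v_0,\dots,v_n]$, exactly paralleling the $p\equiv 3\pmod 4$ case in Theorem \ref{thm:BPnQp}, but now with the richer Milnor $K$-theory $k^M_*(\QQ_2) = \FF_2[\rho,x,y]/(\rho^3,x^2,y^2,\rho^2+xy,\rho x,\rho y)$.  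The key observation is that the classes $\tau^{2^i}$ survive to $E_{2+i}$ and that the only available targets of the correct tridegree are multiples of $x\tau^{2^i-1}v_0^{2+i}$ (the $y$-multiples and $\rho$-multiples being excluded by the relations $\rho x = \rho y = 0$ and degree bookkeeping), so the content is to show the differential is nonzero with exactly the stated page.

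**Key steps in order.**  First I would identify, via the base change functor of Section \ref{sec:Hasse} applied to the map $\QQ_2 \to \QQ_2(\sqrt{5})$ or $\QQ_2 \to \QQ_2^{\mathrm{ur}}$, a comparison of MASS's that lets me transport known differentials.  The cleanest comparison is with $\QQ_2^{\mathrm{ur}}$: over the maximal unramified extension the class $y$ (coming from the residue field, $5$ being a unit) becomes a square, $\rho$ survives or vanishes appropriately, and the $\rho$-BSS collapses as over an odd $p$-adic field with $\varepsilon = 2$ (since $\nu_2(|\FF_2^{\mathrm{ur},\,\times}\text{-type invariant}|)$... — concretely $\QQ_2$ has $\nu_2$ of its relevant unit group giving $\varepsilon(2)$-analogue equal to $2$), yielding $d_2\tau = u\tau v_0^2$ there.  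Pulling back along $\QQ_2 \to \QQ_2^{\mathrm{ur}}$ and matching with the fact that $x$ (the class of the uniformizer $2$) is the element restricting nontrivially, I would conclude $d_2\tau = x\tau v_0^2$ over $\QQ_2$.  Second, I would push this up the tower: since $d_2\tau = x\tau v_0^2 \neq 0$ kills $\tau$ but $\tau^2$ survives (as $d_2(\tau^2) = 2\tau\, d_2\tau = 0$ mod lower filtration — or rather $\tau^2$ is a permanent $E_2$-cycle for the $\rho$-BSS and its first MASS differential is forced to higher page), one checks inductively that $\tau^{2^i}$ is a $d_r$-cycle for $r < 2+i$ and computes $d_{2+i}\tau^{2^i} = x\tau^{2^i-1}v_0^{2+i}$; the inductive engine here is the standard Leibniz/Frobenius argument for iterated squares in an Adams-type spectral sequence, as used in Theorem \ref{thm:BPnQp} and \cite[\S 5]{Ormsby}.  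Third, I would verify there are no other differentials: every $E_2$-class is, up to $v_i$- and $k^M_*$-multiplication, either a power of $\tau$ (handled above) or already a permanent cycle detecting a known homotopy class (the $v_i$'s detect the $v_i$'s, $\rho$ detects $\rho$, etc.), and the relations $\rho x = \rho y = 0$, $x^2 = y^2 = 0$ in $k^M_*(\QQ_2)$ sharply constrain the possible targets so that nothing else can support or receive a differential.

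**The main obstacle.**  The hard part will be Step one — correctly identifying the page on which $\tau$ supports its differential, i.e.\ showing it is exactly $d_2$ and not some later $d_r$, and that the target is the $x$-multiple rather than (say) a class involving $\rho^2 = xy$.  The comparison map to $\QQ_2^{\mathrm{ur}}$ is delicate because one must know the $\BPn{n}$ computation over $\QQ_2^{\mathrm{ur}}$ independently; if that is not already available in the cited literature one instead argues directly that $d_r\tau = 0$ for $r \le 1$ is automatic (the $d_1$ here is the $\rho$-BSS differential, already accounted for in passing to the $E_2$-page of the MASS), that $d_r\tau$ for the MASS must land in $v_0$-divisible classes of the right stem, and that the $v_0$-Bockstein (i.e.\ the behavior of multiplication by $2$ in $\pi_0\BPn{n}_{\QQ_2} = \ZZ_2$, cf.\ the argument in \cite[Lemma 5.4]{Hill}) together with $K_*(\QQ_2)$ being known (Rognes–Weibel / the Quillen–Lichtenbaum computations) forces $d_2\tau = x\tau v_0^2$ in the $n=1$ case, whence in general by naturality of the tower $\BPn{n} \to \BPn{1}$.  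Once $d_2\tau$ is nailed down, the rest is the now-familiar bootstrapping and a finite bookkeeping check, so I expect the writeup to be short modulo that one input.
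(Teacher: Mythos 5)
Your overall architecture (anchor one family of differentials by comparison with known homotopy groups, then bootstrap to all $n$ by naturality) is close in spirit to the paper's, but there are two concrete problems. First, your anchor differential $d_2\tau = x\tau v_0^2$ does not exist: over $\QQ_2$ the $\rho$-Bockstein differential $d_1\tau=\rho v_0$ is nonzero, so $\tau$ itself is already dead on the $E_2$-page of the MASS, and only the even powers $\tau^{2j}$ (together with classes such as $x\tau$, $y\tau$, $\rho^2\tau$, which survive precisely because $\rho x=\rho y=\rho^3=0$) are present there. Consistently, the theorem asserts differentials only for $i\ge 1$; the first MASS differential is $d_3\tau^2=x\tau v_0^3$, and your propagation step (``$d_2\tau\ne 0$ kills $\tau$ but $\tau^2$ survives'') is built on a class that is not in the spectral sequence. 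The paper instead anchors at $n=0$: the groups $(\M\ZZ_2)_\star$ are computed from the Galois cohomology of $\QQ_2$ via universal coefficients, and matching the torsion orders $\ZZ/2^{2+\nu_2(m+1)}$ forces $d_{2+i}\tau^{2^i}=z\tau^{2^i-1}v_0^{2+i}$ for some nonzero $z$ in the span of $x$ and $y$; the case $n>0$ then follows from the linearization map $\BPn{n}\to\BPn{0}$ and tri-degree considerations, which identify the whole spectral sequence as $E_*(\BPn{0})[v_1,\ldots,v_n]$ with the $v_{>0}$ permanent cycles.

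Second, and more seriously, your mechanisms for pinning down $z=x$ (rather than $y$ or $x+y$) do not close. The ring $k^M_*(\QQ_2)=\FF_2[\rho,x,y]/(\rho^3,x^2,y^2,\rho^2+xy,\rho x,\rho y)$ admits an automorphism exchanging $x$ and $y$, so no amount of additive --- or even multiplicative --- bookkeeping against $K_*(\QQ_2)$ or the \'{e}tale cohomology of $\QQ_2$ can distinguish the three candidates for $z$; your fallback argument is therefore insufficient in principle, not just in detail. Your other suggestion, comparison with $\QQ_2^{\mathrm{ur}}$ or $\QQ_2(\sqrt{5})$, would require an independent MASS computation over that field which is not available and whose mod $2$ Milnor $K$-theory is itself not small. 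The paper's resolution is global rather than local: the identification $z=x$ is deferred to Lemma \ref{lem:x}, where the Hasse map sends a putative global differential to the MASS over $\QQ_5$, and Theorem \ref{thm:BPnQp} (with $2$ generating the Teichm\"uller lift in $\QQ_5$) shows the local target is $[2]\tau^{2^i-1}v_0^{3+i}$, ruling out any $y$-component. If you want a purely local proof of the coefficient $x$, you need to supply a genuinely new argument at this point.
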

\begin{proof}
We first treat the case $n=0$ in which we utilize a comparison with \'{e}tale cohomology.  
Applying the universal coefficient theorem and results in \cite[Chapter VII]{NSW} yields
\[
(\M\ZZ_2)_{m+n\alpha} 
=
\begin{cases}
  \ZZ_2 &\text{if }m=n=0,\\
  \ZZ_2^{\oplus 2} \oplus \ZZ/2 & \text{if }m=0,n=-1,\\
  \ZZ/2 &\text{if }m=0,n=-2,\\
  \ZZ_2\oplus \ZZ/2 &\text{if }n=-(m+1)<-1,~m\text{ even},\\
  \ZZ_2\oplus \ZZ/2^{2+\nu_2(m+1)} &\text{if }n=-(m+1)<-1,~m\text{
    odd},\\
  \ZZ/2 &\text{if }n=-(m+2)<-2,~m\text{ even},\\
  \ZZ/2^{2+\nu_2(m+1)} &\text{if }n=-(m+2)<-2,~m\text{ odd},\\
  0 &\text{otherwise}.
\end{cases}  
\]
In order to have the correct 2-torsion in degrees of the form
$m-(m+1)\alpha$ we must have $d_{2+i}\tau^{2^i} = z\tau^{2^i-1}v_0^{2+i}$ 
for $z$ a nonzero linear combination of $x$ and $y$.  
In Lemma \ref{lem:x} we show that $z = x$.  
(The proof is deferred because it relies on the Hasse map defined in the next section.)

For $n>0$ consider the linearization map $\BPn{n}\to \BPn{0}$ and the induced map of spectral sequences.  
By tri-degree considerations we can identify the spectral sequence for $\BPn{n}$ as $E_*(\BPn{0})[v_1,\ldots,v_n]$ 
where the $v_{>0}$ are permanent cycles.  
This concludes the proof.
\end{proof}

The behavior of the spectral sequence is pictured in Figure
\ref{fig:Q2} using the same conventions as Figure \ref{fig:Qp}.  In the
dimensional range pictured $E_5 = E_\infty$.

\begin{center}
\begin{figure}
\begin{tabular}{cc}
\includegraphics[width=1.8in]{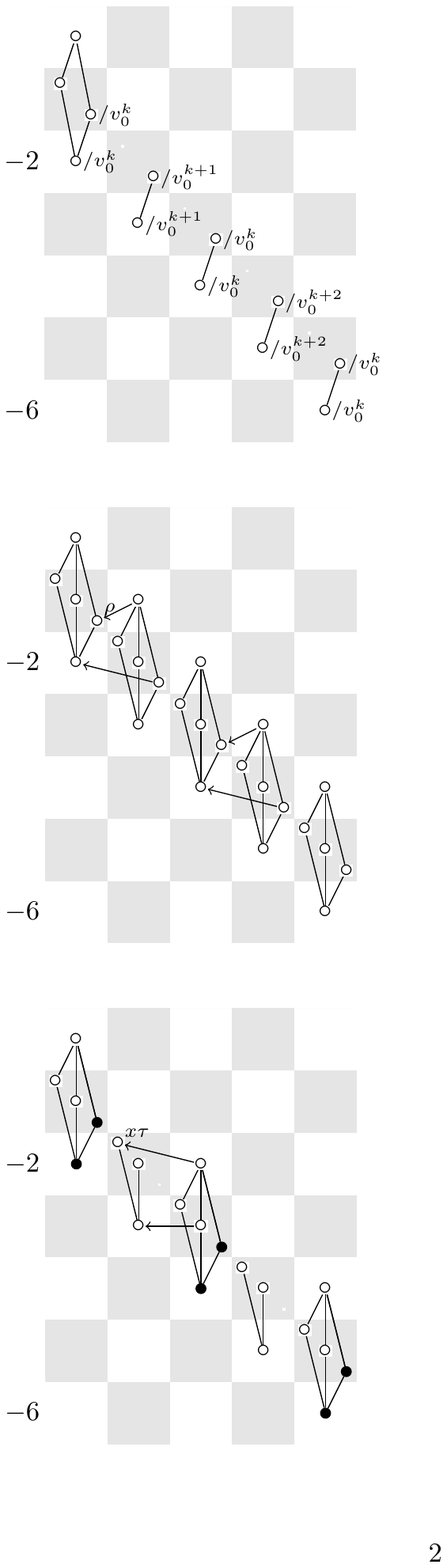}
&
\includegraphics[width=1.8in]{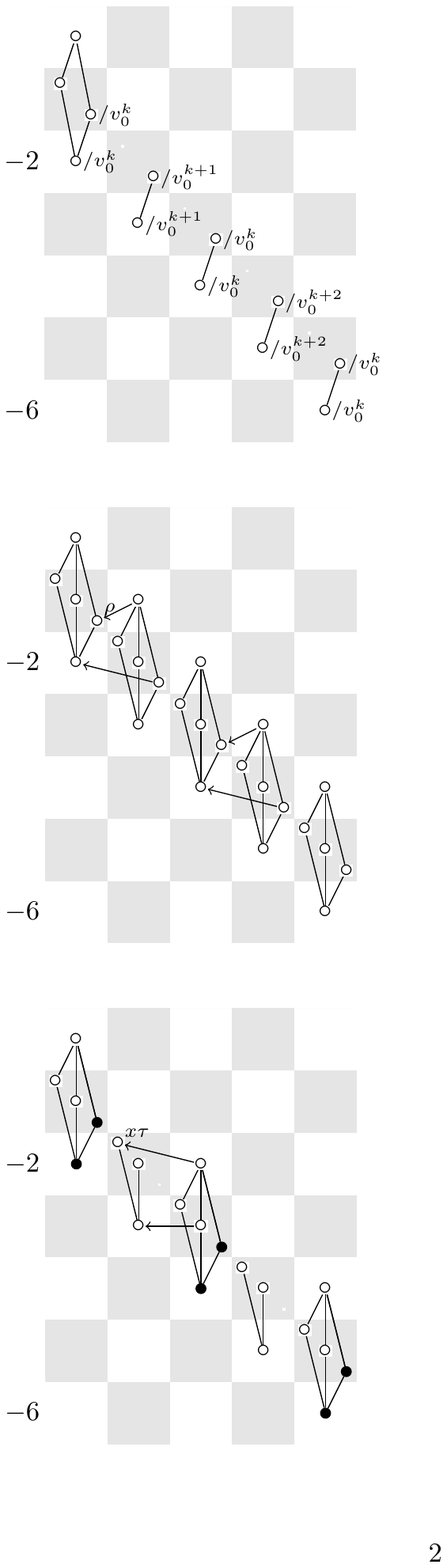}

\\

$E_1$	&$E_3$\\

\vspace{.6cm}\\

\includegraphics[width=1.8in]{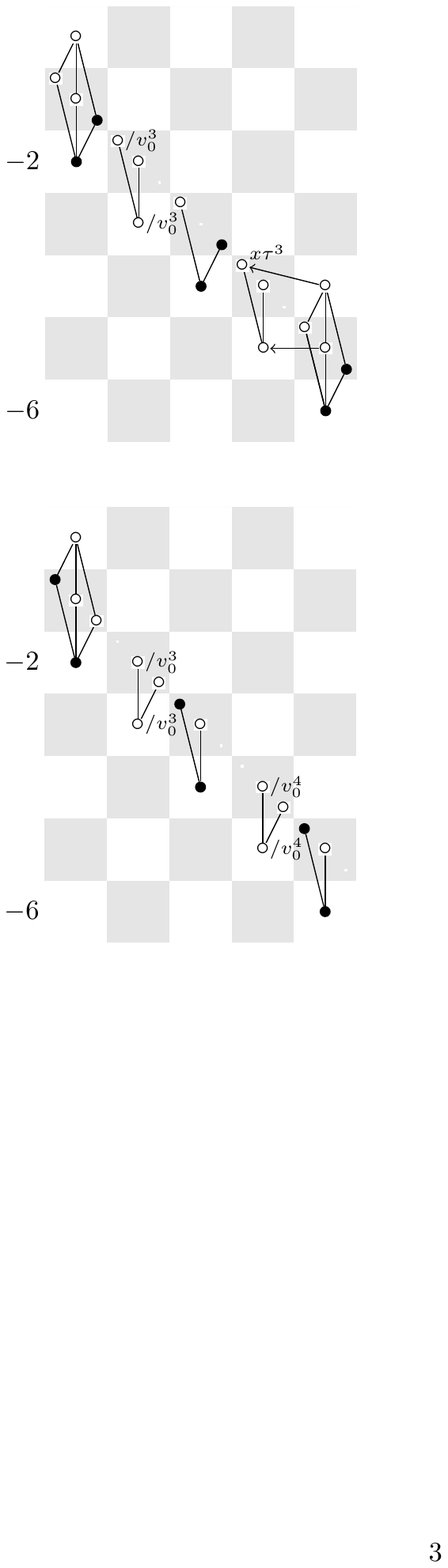}
&
\includegraphics[width=1.8in]{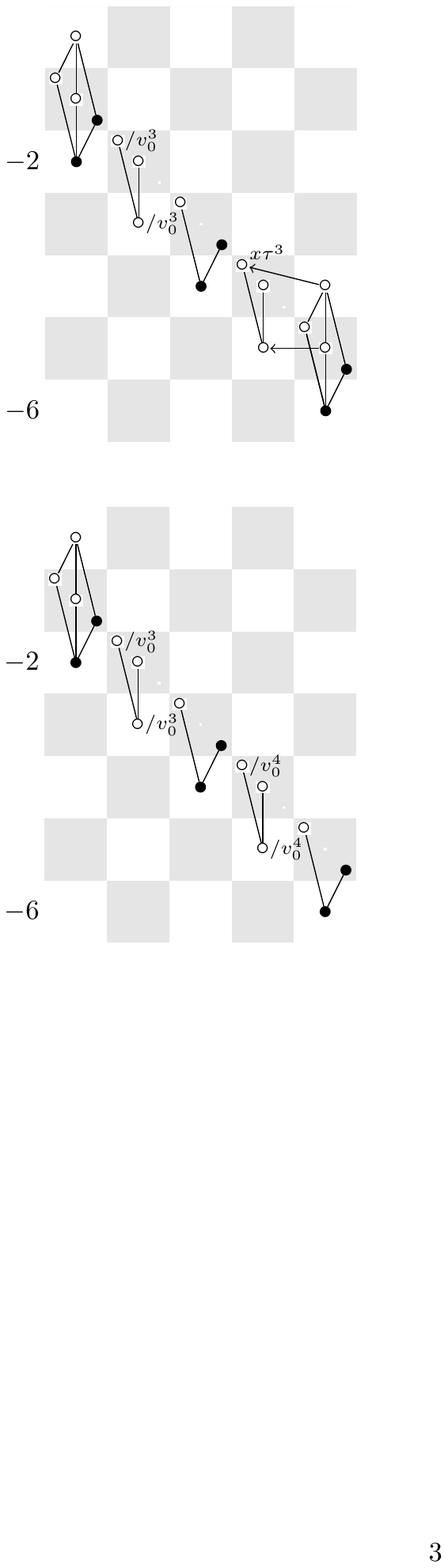}
\\

$E_4$	&$E_5$
\end{tabular}
\caption{The $\rho$-BSS and MASS for $\BPn{n}$ over
  $\QQ_2$}\label{fig:Q2}
\end{figure}
\end{center}

\begin{cor}
Over $\QQ_2$ we have $\BPn{n}_{\star}=(\M\ZZ_2)_{\star}[v_1,\cdots,v_n]$.
\end{cor}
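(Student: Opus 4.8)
The plan is to treat this as the $\QQ_2$-analogue of Theorem~\ref{thm:kC}, the essential difference being that the motivic Adams spectral sequence no longer collapses. First I would invoke Theorem~\ref{thm:ASSconvergence}: since $\BPn{n}$ is cellular of finite type over $\QQ_2$ and $\cd_2(\QQ_2(\sqrt{-1}))<\infty$, the MASS for $\BPn{n}$ over $\QQ_2$ converges strongly to $\BPn{n}_\star$ (we work $2$-completely throughout). By the theorem immediately preceding this corollary the MASS for $\BPn{n}$ over $\QQ_2$ is $E_\ast(\BPn{0})[v_1,\dots,v_n]$ with the $v_{>0}$ permanent cycles and all differentials determined by $d_{2+i}\tau^{2^i}=x\tau^{2^i-1}v_0^{2+i}$; consequently
\[
  E_\infty(\BPn{n}) \;\cong\; E_\infty(\BPn{0})[v_1,\dots,v_n]
\]
as bigraded $\FF_2$-algebras, free as a module over $E_\infty(\BPn{0})$ on the monomials in the $v_i$. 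Since the MASS for $\BPn{0}$ also converges, $E_\infty(\BPn{0})$ is the associated graded of $(\M\ZZ_2)_\star=\BPn{0}_\star$ for the Adams filtration, and the ring $(\M\ZZ_2)_\star$ over $\QQ_2$ has already been computed above by comparison with \'etale cohomology. So the associated graded of $\BPn{n}_\star$ is that of $(\M\ZZ_2)_\star[v_1,\dots,v_n]$, with the $v_i$ in Adams filtration one.

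It remains to resolve the multiplicative extensions, and I would proceed as in the proof of Theorem~\ref{thm:kC}. The class $v_0$ detects multiplication by $2$: this holds in $\pi_0\BPn{n}=\ZZ_2$ and propagates by \cite[Lemma 5.4]{Hill}. Hence within each monomial summand $v_1^{a_1}\cdots v_n^{a_n}\cdot E_\infty(\BPn{0})$ the filtration behaviour of multiplication by $2$ is forced to agree with that in $(\M\ZZ_2)_\star$, so that summand contributes a copy of $v_1^{a_1}\cdots v_n^{a_n}\cdot(\M\ZZ_2)_\star$ to $\BPn{n}_\star$. To see that distinct monomial summands are not glued together by a hidden $v_0$-extension, I would use the $\BP$-module structure: the classes $v_1,\dots,v_n\in\BP_\star$ act on $\BPn{n}_\star$ by additive maps compatible with the free $E_\infty(\BPn{0})$-module structure on $E_\infty(\BPn{n})$, and one checks this action is free with the $v_i$ mutually independent, so that $\BPn{n}_\star\cong\bigoplus_I v^I\cdot(\M\ZZ_2)_\star = (\M\ZZ_2)_\star[v_1,\dots,v_n]$.

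The main obstacle is precisely this last point --- ruling out hidden multiplicative extensions between distinct monomial summands. The associated-graded computation does not by itself determine $\BPn{n}_\star$ (already $v_1^3$ and $v_2$ occupy a common bidegree, so a priori their $v_0$-towers could interfere), so one must genuinely invoke the ring/module structure rather than just the shape of $E_\infty$. For $n=\infty$ the same argument applies verbatim with the countably many generators $v_1,v_2,\dots$; equivalently one passes to the inverse limit over the tower $\BP\to\cdots\to\BPn{n}\to\cdots$, the relevant $\varprojlim^1$-terms vanishing by the finite-type hypothesis.
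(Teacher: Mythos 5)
Your proposal is correct and follows essentially the same route as the paper, which states this corollary as an immediate consequence of the preceding theorem: the MASS for $\BPn{n}$ over $\QQ_2$ is $E_*(\BPn{0})[v_1,\ldots,v_n]$ with the $v_{>0}$ permanent cycles, it converges by Theorem \ref{thm:ASSconvergence}, and the extension problems are resolved exactly as in the proof of Theorem \ref{thm:kC} via $v_0$ detecting multiplication by $2$ \cite[Lemma 5.4]{Hill}. Your additional care about hidden extensions between distinct monomial summands (e.g.\ $v_1^3$ versus $v_0^2v_2$) and about the $n=\infty$ case is more explicit than the paper, but it is elaboration of, not a departure from, the same argument.
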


\section{The Hasse map}\label{sec:Hasse}
Our goal now is to use the local data from the previous section to
produce global computations.  To this end, we need a method for
comparing the local and global variants of $\BPn{n}$.  In this section
we construct a Hasse map which serves this purpose.

For any scheme map $f:S\to T$ there is a base change functor
\[
  f^*:\Sp(T)\to \Sp(S)
\]
from $T$-spectra to $S$-spectra admitting a right adjoint
\[
  f_*:\Sp(S)\to \Sp(T).
\]

Let $\BPn{n}_S$ denote the truncated Brown-Peterson spectrum in $S$-spectra.  
Recall that $\BPn{n}_S$ is constructed from $\MGL_S$ by $p$-localizing, 
taking the iterated colimit of the Quillen idempotent (producing $\BP_S$) and then killing off $v_i$,
$i>n$.
Recall the isomorphism $\MGL_S \cong f^*\MGL_T$ for $f$ as above.

\begin{prop}\label{prop:BPnPullsBack}
The truncated Brown-Peterson spectra $\BPn{n}$ satisfy 
\[
\BPn{n}_S
\cong
f^*\BPn{n}_T.
\]
\end{prop}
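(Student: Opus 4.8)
The plan is to unwind the construction of $\BPn{n}$ step by step and check that each step commutes with the base change functor $f^*$. The key technical input is that $f^*$ is a symmetric monoidal left adjoint between the stable motivic homotopy categories: it preserves the smash product, the sphere spectrum, arbitrary colimits (in particular homotopy colimits and cofiber sequences), and hence any construction built functorially out of these. The starting point $\MGL_S \cong f^*\MGL_T$ is already recalled in the excerpt.

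First I would handle the $p$-localization. Since $\MGL_{(p)}$ is the homotopy colimit (telescope) of the multiplication-by-$n$ maps on $\MGL$ over integers $n$ prime to $p$, and $f^*$ commutes with homotopy colimits, we get $f^*(\MGL_{T,(p)}) \cong (f^*\MGL_T)_{(p)} \cong \MGL_{S,(p)}$. Next, the Quillen idempotent: $\BP$ is obtained from $\MGL_{(p)}$ as the colimit of an idempotent self-map $e$ (or, in the telescope presentation, as $\operatorname{hocolim}(\MGL_{(p)} \xrightarrow{e} \MGL_{(p)} \xrightarrow{e} \cdots)$). The idempotent is constructed out of the multiplicative structure and the coordinate on $\MGL$, all of which are compatible with $f^*$ because $f^*$ is monoidal and sends the universal orientation to the universal orientation (equivalently, $\MGL_S$ with its orientation pulls back from $\MGL_T$). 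So $f^*$ carries this self-map to the corresponding self-map over $S$ and, commuting with the colimit, yields $f^*\BP_T \cong \BP_S$. In particular the polynomial generators $v_i \in \BP_\star$ are carried to the $v_i$ over $S$.

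Finally, $\BPn{n}$ is the iterated cofiber (in $\BP$-modules) of multiplication by $v_{n+1}, v_{n+2}, \ldots$, i.e. a sequential homotopy colimit of cofibers of $v_i$-multiplication maps on $\BP$-modules. Since $f^*$ takes $\BP_T$-modules to $\BP_S$-modules (monoidality), preserves cofiber sequences, preserves the multiplication-by-$v_i$ maps by the previous paragraph, and commutes with the sequential homotopy colimit, we obtain $f^*\BPn{n}_T \cong \BPn{n}_S$. I expect the only real subtlety — and hence the point deserving the most care — is bookkeeping the module-category versus ambient-category issue: one wants $f^*$ to be compatible with the $\BP$-module structures so that "killing $v_i$ in $\BP$-modules" is preserved, not merely killing $v_i$ as a self-map in $\SH$. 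This follows formally because $f^*$ is symmetric monoidal, so it induces a functor on module categories over $\BP_T \mapsto \BP_S$ commuting with the forgetful functors; once that is noted, the rest is a routine diagram chase through colimits.
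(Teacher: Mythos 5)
Your argument is exactly the paper's: the construction of $\BPn{n}$ from $\MGL$ proceeds entirely through colimits ($p$-localization, the Quillen idempotent telescope, iterated cofibers killing the $v_i$), and $f^*$ preserves these because it is a (symmetric monoidal) left adjoint. The paper's proof is a two-line version of this; your extra care about the $\BP$-module structure is a reasonable elaboration, not a different route.
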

\begin{proof}
Localization at $p$, inverting the Quillen idempotent, and killing $v_i$'s are all colimit constructions;
$f^*$ commutes with colimits because it is a left adjoint.
\end{proof}

For $v$ a real or $p$-adic place of $\QQ$ we denote by $i_v:\Spec \QQ_v \to \Spec\QQ$ the map of 
Zariski spectra induced by the field extension $\QQ_v/\QQ$.

Consider a family of motivic spectra (over different base fields)
$\EEE = \{\EEE_k = \EEE_{\Spec k} \in \SH(k)\mid
k=\QQ\text{ or }\QQ_v\}$.
In the following we assume that this family satisfies $\EEE_{\QQ_v}
\cong i_v^*\EEE_\QQ$ for all non-complex places $v$ of $\QQ$.

\begin{defn}\label{defn:ratlModel}
The \emph{rational model of $\EEE_{\QQ_v}$} is defined as 
\[
\EEE^\QQ_{\QQ_v} 
\defined 
i_v{}_*\EEE_{\QQ_v} 
\cong 
i_v{}_*i_v^*\EEE_\QQ.
\]
\end{defn}

The terminology is justified by the following proposition.

\begin{prop}\label{prop:ratlModelHtpy}
The bigraded homotopy groups of $\EEE^\QQ_{\QQ_v}$ (computed in $\SH(\QQ)$, the stable motivic
homotopy category over $\QQ$) are isomorphic to those of $\EEE_{\QQ_v}$ in $\SH(\QQ_v)$.
\end{prop}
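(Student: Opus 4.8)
The plan is to unwind the definition $\EEE^\QQ_{\QQ_v} \defined i_v{}_* \EEE_{\QQ_v}$ and use the adjunction $(i_v^*, i_v{}_*)$ together with the fact that $\pi_\star$ of a motivic spectrum is computed by mapping out of the bigraded spheres $S^{m+n\alpha}$. First I would observe that the bigraded spheres $S^{m+n\alpha}_\QQ = (S^1)^{\wedge m}\wedge (\AA^1\smallsetminus 0)^{\wedge n}$ are pulled back from $\Spec\ZZ$ (or in any case are stable under base change), so that $i_v^* S^{m+n\alpha}_\QQ \cong S^{m+n\alpha}_{\QQ_v}$; this is the analogue of Proposition \ref{prop:BPnPullsBack} for spheres and follows because $\AA^1\smallsetminus 0$ and $S^1$ are defined integrally and $i_v^*$ (being a left adjoint induced by a scheme map) commutes with the smash products and suspension-spectrum functor used to build them.

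Granting this, the computation is a two-line adjunction argument:
\[
  \pi_{m+n\alpha}\EEE^\QQ_{\QQ_v}
  = [S^{m+n\alpha}_\QQ,\, i_v{}_*\EEE_{\QQ_v}]_{\SH(\QQ)}
  \cong [i_v^* S^{m+n\alpha}_\QQ,\, \EEE_{\QQ_v}]_{\SH(\QQ_v)}
  \cong [S^{m+n\alpha}_{\QQ_v},\, \EEE_{\QQ_v}]_{\SH(\QQ_v)}
  = \pi_{m+n\alpha}\EEE_{\QQ_v},
\]
where the middle isomorphism is the derived base-change adjunction (which exists at the level of homotopy categories, since $i_v^* \dashv i_v{}_*$ is a Quillen adjunction) and the third isomorphism applies the sphere identification from the previous step. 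Summing over all $m, n \in \ZZ$ gives the claimed isomorphism on bigraded homotopy groups, and it is manifestly compatible with the ring structure when $\EEE$ is a ring spectrum since all maps involved are induced by functors respecting smash products.

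The main obstacle is not the adjunction manipulation, which is formal, but rather pinning down precisely that $i_v^*$ carries the motivic spheres over $\QQ$ to the motivic spheres over $\QQ_v$ — i.e., that the Tate objects and simplicial circle are stable under this base change and that $i_v^*$ commutes with the relevant smash products and with $\Sigma^\infty_{\PP^1}$. This is where one must be a little careful about which model of $\SH(-)$ one uses and whether $i_v^*$ is taken to be the derived (left Quillen) functor; but since $i_v$ is a map of (Zariski spectra of) fields, $i_v^*$ is particularly well-behaved, and the needed compatibilities are standard (cf.\ the discussion of base change in \cite{MorelVoevodsky} and the cellularity/stability facts already invoked for $\MGL$ and $\BPn{n}$ in Proposition \ref{prop:BPnPullsBack}). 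I would state this sphere-stability fact explicitly as the one nontrivial input and then let the adjunction do the rest.
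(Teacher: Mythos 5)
Your proof is correct and is essentially the paper's own argument: the paper likewise unwinds $\EEE^\QQ_{\QQ_v}=i_v{}_*\EEE_{\QQ_v}$ and applies the $(i_v^*,i_v{}_*)$ adjunction together with $i_v^*\one_\QQ=\one_{\QQ_v}$ (its bigraded form of your sphere-stability observation). Your explicit attention to $i_v^*S^{m+n\alpha}_\QQ\cong S^{m+n\alpha}_{\QQ_v}$ is just a spelled-out version of the compatibility the paper leaves implicit in its notation $[\one_\QQ,-]^\QQ_\star$.
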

\begin{proof}
By adjunction isomorphisms and the fact that $i_v^*\one_\QQ = \one_{\QQ_v}$ we have
\[\begin{aligned}
     \pi_\star^\QQ \EEE^\QQ_{\QQ_v} 
  = [\one_\QQ,\EEE^\QQ_{\QQ_v}]^\QQ_\star 
  = [\one_\QQ,i_v{}_*\EEE_{\QQ_v}]^\QQ_\star
  = [i_v^*\one_\QQ,\EEE_{\QQ_v}]^{\QQ_v}_\star 
  = [\one_{\QQ_v},\EEE_{\QQ_v}]^{\QQ_v}_\star.
\end{aligned}\]
This is by definition the homotopy groups $\pi_\star^{\QQ_v}\EEE_{\QQ_v}$ of $\EEE_{\QQ_v}$ in $\SH(\QQ_v)$.
\end{proof}

Since $\EEE^\QQ_{\QQ_v}\cong i_v{}_*i_v^*\EEE_\QQ$ the $(i_v^*,i_v{}_*)$
adjunction unit induces a map
\[
  \eta_v:\EEE_\QQ\to \EEE_{\QQ_v}^\QQ.
\]

\begin{defn}\label{defn:Hasse}
For $\EEE$ as above, the \emph{motivic Hasse map} is given by 
\[
H_{\EEE} 
\defined 
\prod \pi_\star\eta_v:\pi_\star\EEE_\QQ
\to 
\prod \pi_\star\EEE_{\QQ_v}^\QQ
\]
where the product runs over real and $p$-adic places $v$.
\end{defn}

\begin{defn}
The family of spectra $\EEE$ satisfies the \emph{motivic Hasse principle} 
if the motivic Hasse map $H_{\EEE}$ is monic.
\end{defn}

\begin{prop}\label{prop:HasseOnHtpy}
The Hasse map takes the form
\[
  H_{\EEE}:\pi_\star^\QQ \EEE_\QQ\to \prod \pi_\star^{\QQ_v} \EEE_{\QQ_v}.
\]
\end{prop}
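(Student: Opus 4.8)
The plan is to simply combine the preceding definition of $H_\EEE$ as a product of the maps $\pi_\star\eta_v$ with Proposition \ref{prop:ratlModelHtpy}, which identifies the homotopy groups of the rational model $\EEE^\QQ_{\QQ_v}$ (computed in $\SH(\QQ)$) with the homotopy groups of $\EEE_{\QQ_v}$ (computed in $\SH(\QQ_v)$). First I would recall from Definition \ref{defn:Hasse} that the source of $H_\EEE$ is $\pi_\star \EEE_\QQ$, which in the grading that keeps track of the base field is $\pi_\star^\QQ \EEE_\QQ$; this is immediate. For the target, I would invoke Proposition \ref{prop:ratlModelHtpy} to replace each factor $\pi_\star \EEE^\QQ_{\QQ_v} = \pi_\star^\QQ \EEE^\QQ_{\QQ_v}$ by the isomorphic group $\pi_\star^{\QQ_v}\EEE_{\QQ_v}$, so that $\prod \pi_\star \EEE^\QQ_{\QQ_v} \cong \prod \pi_\star^{\QQ_v}\EEE_{\QQ_v}$.

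The one point requiring a little care is naturality: the identification in Proposition \ref{prop:ratlModelHtpy} must be compatible with the map $\pi_\star\eta_v$, so that after the identification the $v$-component of $H_\EEE$ really is a map $\pi_\star^\QQ\EEE_\QQ \to \pi_\star^{\QQ_v}\EEE_{\QQ_v}$ and not merely an abstractly isomorphic gadget. I would check this by unwinding the chain of adjunction isomorphisms used in the proof of Proposition \ref{prop:ratlModelHtpy}: the isomorphism $\pi_\star^\QQ \EEE^\QQ_{\QQ_v} = [\one_\QQ, i_v{}_* \EEE_{\QQ_v}]^\QQ_\star \cong [\one_{\QQ_v}, \EEE_{\QQ_v}]^{\QQ_v}_\star = \pi_\star^{\QQ_v}\EEE_{\QQ_v}$ is precisely the $(i_v^*, i_v{}_*)$ adjunction isomorphism, and precomposing it with $\pi_\star\eta_v = (\eta_v)_*$, where $\eta_v$ is the adjunction unit, sends a class $f\colon \one_\QQ \to \EEE_\QQ$ to $i_v^* f\colon \one_{\QQ_v} = i_v^*\one_\QQ \to i_v^*\EEE_\QQ = \EEE_{\QQ_v}$ by the triangle identity for the adjunction. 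Thus the $v$-component of $H_\EEE$ is identified with the base-change map $i_v^*\colon \pi_\star^\QQ \EEE_\QQ \to \pi_\star^{\QQ_v}\EEE_{\QQ_v}$ on homotopy groups.

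Assembling these factors over all real and $p$-adic places $v$ then yields exactly the claimed form
\[
  H_{\EEE}:\pi_\star^\QQ \EEE_\QQ\to \prod \pi_\star^{\QQ_v} \EEE_{\QQ_v}.
\]
There is essentially no hard step here: the statement is a bookkeeping consequence of the adjunction formalism set up in Proposition \ref{prop:ratlModelHtpy} and Definition \ref{defn:Hasse}. The only thing to be careful about is the triangle-identity check in the previous paragraph, which ensures that the display is not merely true up to an unspecified isomorphism but holds on the nose with the $v$-component being honest base change.
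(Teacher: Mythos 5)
Your proposal is correct and follows essentially the same route as the paper, which simply cites Proposition \ref{prop:ratlModelHtpy}; your extra check that the adjunction isomorphism intertwines $\pi_\star\eta_v$ with the base-change map $i_v^*$ is a worthwhile elaboration the paper leaves implicit. The only content of the paper's proof you omit is an aside (not needed for the statement) explaining why the target is a product rather than a direct sum.
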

\begin{proof}
This is a consequence of Proposition \ref{prop:ratlModelHtpy}.  
Note that the target cannot be pared down to a direct sum.  For instance, if we take $\EEE = \one$ we find that $\rho$ has nontrivial image in infinitely many of the groups $\pi_{-\alpha}^{\QQ_v} \one_{\QQ_v}$.
\end{proof}

The Hasse maps of interest in this paper are the ones for $\BPn{n}$, $0\le n\le \infty$.  
Let $E_*^{*,\star}(k)$ denote the MASS for $\BPn{n}_k$.  
Then the Hasse map induces a map of MASSs
\begin{equation}\label{eqn:MASSHasse}
  E_*^{*,\star}(\QQ) \to \prod_{v} E_*^{*,\star}(\QQ_v).
\end{equation}

\section{Computations over $\QQ$ and the motivic Hasse principle}\label{sec:rational}

This section proves the main theorem of this paper.

\begin{thm}\label{thm:Hasse}
The truncated Brown-Peterson spectra $\BPn{n}$ satisfy the motivic Hasse principle.
\end{thm}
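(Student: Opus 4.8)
The plan is to prove the Hasse principle by combining the $\rho$-Bockstein and motivic Adams spectral sequence computations over $\QQ_2$, $\QQ_p$ ($p>2$), and $\RR$ with a careful analysis of the map of motivic Adams spectral sequences in (\ref{eqn:MASSHasse}). The central technical tool will be the local MASS computations from Section \ref{sec:comp}, together with the identification of the $\QQ$-side $E_2$-term via a $\rho$-Bockstein spectral sequence. First I would set up the map of $E_2$-pages: by Theorem \ref{thm:E2BPn} the $E_2$-term of the MASS over any field $k$ is $\Ext_{\mathcal{E}(n)_k}(\M_\star,\M_\star)$, and this is computed by a $\rho$-BSS whose $E_1$-page is $\M_\star^{(k)}[v_0,\ldots,v_n]$. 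The base-change maps $i_v^*$ induce compatible maps on $\M_\star$, on the Hopf algebroids $\mathcal{E}(n)$, and hence on these Bockstein spectral sequences. The key point is that $k^M_*(\QQ) \to \prod_v k^M_*(\QQ_v)$ is injective (this is the classical Hasse principle / Gauss reciprocity for the Brauer group in degrees $\le 2$, together with the fact that units of $\QQ$ inject into units of the completions), and the full Milnor $K$-theory / étale cohomology of $\QQ$ injects into the product over all places, including the archimedean one.

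The main work is then to propagate this injectivity from $\M_\star$ up through the $\rho$-BSS to $\Ext_{\mathcal{E}(n)}$, and then through the MASS to homotopy. For the $E_2$-term I would argue page by page: the $\rho$-BSS over $\QQ$ has $d_1\tau = \rho v_0$, matching the $\QQ_2$ and $\QQ_p$ ($p\equiv 3$) differentials, and the higher differentials in Proposition \ref{prop:ExtEnR} are detected after base change to $\RR$ (whose $\rho$-BSS has the full pattern $d_{2^{i+1}-1}\tau^{2^i} = \rho^{2^{i+1}-1}v_i$). Since $\M_\star(\QQ)$ injects into $\prod_v \M_\star(\QQ_v)$ and the differentials are determined by these local differentials, one gets an injection on $E_2$-pages of the $\rho$-BSS, hence on $\Ext_{\mathcal{E}(n)}$, compatibly with the MASS structure. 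For the MASS itself: the $\QQ$-side differentials $d_r$ must map to the known local differentials, and conversely any class in $E_r(\QQ)$ surviving to $E_\infty$ maps to a nonzero class in some $E_\infty(\QQ_v)$ because its image in $E_2(\QQ_v)$ for an appropriate $v$ (typically $v=2$ or a well-chosen odd $p$ or $\infty$, depending on the weight) is nonzero and survives. The hard part will be controlling \emph{hidden extensions} and ensuring that injectivity on associated graded pieces ($E_\infty$) lifts to injectivity on the homotopy groups themselves: a class in $\pi_\star\BPn{n}_\QQ$ that is zero on associated graded but nonzero in homotopy would have to be a nontrivial sum of filtration jumps, and one must check no such phantom classes exist by a filtration/degree argument (the $v_0$-Bockstein towers are infinite but each homotopy group is built from finitely many $E_\infty$ contributions of bounded filtration once a degree is fixed).

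The cleanest route is probably the following sequence of steps. (1) Establish that $H_{\M\ZZ/2} : \M_\star(\QQ) \to \prod_v \M_\star(\QQ_v)$ is injective, which reduces to the statement for $k^M_*$ and follows from the Hasse--Minkowski principle in degree $\le 2$ plus $\QQ^\times/(\QQ^\times)^2 \hookrightarrow \prod_v \QQ_v^\times/(\QQ_v^\times)^2$. (2) Deduce that the map of $E_1$-pages of the $\rho$-BSS for $\Ext_{\mathcal{E}(n)}$ over $\QQ$ into the product over all places is injective, and that the $\QQ$-differentials are exactly those visible locally, so $\Ext_{\mathcal{E}(n)}(\M_\star^\QQ,\M_\star^\QQ) \hookrightarrow \prod_v \Ext_{\mathcal{E}(n)}(\M_\star^{\QQ_v},\M_\star^{\QQ_v})$. (3) Use this on $E_2$-pages of the MASS (\ref{eqn:MASSHasse}) together with the fact that a map of spectral sequences injective on $E_2$ and compatible with differentials forces every $E_\infty(\QQ)$-class to survive in some factor, giving injectivity on $E_\infty$. (4) Upgrade from $E_\infty$ (the associated graded of the Adams filtration on $\pi_\star\BPn{n}_\QQ$) to $\pi_\star\BPn{n}_\QQ$ itself: since the filtration is complete and exhaustive (strong convergence by Theorem \ref{thm:ASSconvergence}, as $\cd_2(\QQ(\sqrt{-1}))<\infty$) and each bidegree has finite filtration length contributions, injectivity on associated graded implies injectivity of $H_{\BPn{n}}$. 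The main obstacle, as noted, is step (4) together with verifying compatibility of differentials in step (3) — specifically, matching the $\QQ_2$ differential $d_{2+i}\tau^{2^i} = x\tau^{2^i-1}v_0^{2+i}$ (Lemma \ref{lem:x}) with the odd-$p$ and real differentials so that no $E_\infty(\QQ)$-class becomes invisible everywhere.
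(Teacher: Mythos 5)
Your overall architecture — injectivity on $k^M_*(\QQ)$, then on the $\rho$-Bockstein $E_1$, then on $\Ext_{\mathcal{E}(n)}$, then through the MASS, then lifting from $E_\infty$ to homotopy via strong convergence — matches the paper's, and your steps (1), (2) and (4) are essentially right (step (4) is handled by strong convergence from Theorem \ref{thm:ASSconvergence} together with the Hausdorff property of the Adams filtration, exactly as you suggest). But step (3) contains a genuine gap: it is simply false that a map of spectral sequences that is injective on $E_2$ and compatible with differentials is injective on $E_\infty$. The failure mode is precisely the relevant one here: a global permanent cycle $\tilde y$ could map to a local class $H(\tilde y)$ that is a $d_r$-boundary, $d_r(x)=H(\tilde y)$, without $\tilde y$ being a global boundary; then injectivity is destroyed on $E_{r+1}$. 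Your justification — that the image of a surviving global class "is nonzero and survives" over a well-chosen place — is exactly the assertion to be proved, and as stated it is circular: knowing that the local image survives requires knowing that no local differential hits it, which is the same question.

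What is actually needed, and what the paper supplies, is the inductive statement that every local boundary lying in the image of the Hasse map is the Hasse image of a global boundary. Concretely, given $d_r(x)=y=H(\tilde y)$ in $\prod_v E_r$, one must (i) construct an explicit global class $\tilde x$ with $H(\tilde x)$ supporting the same differential — this uses the two explicit shapes of the local differentials from Theorem \ref{thm:BPnQp} and the fact that $k^M_2(\QQ)\to\bigoplus_p k^M_2(\QQ_p)$ is an isomorphism, so that only finitely many places contribute and $\tilde x$ can be taken to be a finite sum of classes $[p]'\tau^{2^ij}\underline{v}^K$ (or a single $\tau$-power times a $v$-monomial); and (ii) show $\tilde x$ survives to $E_r$, which requires ruling out earlier global differentials on $\tilde x$. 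Step (ii) is not automatic: the possible targets are either classes detected at some place (ruled out because no corresponding local $d_{r'}$ exists for $r'<r$) or $\rho^3\cdot v_{>0}$-divisible classes, which are ruled out because they would be witnessed over $\RR$, where the MASS collapses at $E_2$. Without this construction and the collapse over $\RR$ as input, your step (3) does not go through; with it, the induction closes and your steps (1), (2), (4) complete the proof as in the paper.
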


\begin{cor}
There are no hidden multiplicative extensions in the motivic Adams
spectral sequence for $\BPn{n}$ over $\QQ$ and $v_0$-multiplication
represents multiplication by 2. 
\end{cor}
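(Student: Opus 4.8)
The plan is to deduce the corollary from Theorem~\ref{thm:Hasse} together with the collapse statements over the completions recorded in Section~\ref{sec:comp}. First I would observe that over each completion $\QQ_v$ the relevant multiplicative-extension question is already settled: over $\RR$ and $\QQ_2$ we know $\BPn{n}_\star$ explicitly (Theorems~\ref{thm:bpnR} and the corollary to the $\QQ_2$ computation, giving $\BPn{n}_{\star}=(\M\ZZ_2)_\star[v_1,\dots,v_n]$), and over $\QQ_p$, $p>2$, the $E_\infty$-term of Theorem~\ref{thm:BPnQp} together with \cite[Theorem 5.7]{Ormsby} does the same; in every case $v_0$ on $E_\infty$ detects multiplication by $2$, exactly as over $\CC$ by \cite[Lemma 5.4]{Hill}. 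So the local statement is known and the only issue is globalizing it to $\QQ$.

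Next I would set up the Hasse map of motivic Adams spectral sequences \eqref{eqn:MASSHasse}, $E_*^{*,\star}(\QQ)\to\prod_v E_*^{*,\star}(\QQ_v)$, and its abutment $H_{\BPn{n}}:\pi_\star\BPn{n}_\QQ\to\prod_v\pi_\star\BPn{n}_{\QQ_v}$, which is monic by Theorem~\ref{thm:Hasse}. Suppose $a,b\in E_\infty^{*,\star}(\QQ)$ are classes in the appropriate bidegrees with $v_0\cdot a$ and $b$ representing the same homotopy class modulo higher Adams filtration; equivalently, a putative hidden extension says some $x\in\pi_\star\BPn{n}_\QQ$ in Adams filtration $s$ has the property that $2x$ has Adams filtration $>s+1$ rather than being detected by $v_0$ times the class detecting $x$. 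Push this situation through $H_{\BPn{n}}$: in each $\pi_\star\BPn{n}_{\QQ_v}$ the element $x_v$ (the image of $x$) satisfies $2x_v=v_0\cdot(\text{class of }x_v)$ because there are no hidden extensions locally and $v_0$ represents $2$ locally. Since the Hasse map is a map of spectral sequences it is compatible with Adams filtration, so if $2x$ had strictly larger filtration globally than $v_0$ times the relevant class, the same strict inequality would have to be realizable compatibly in $\prod_v$; but locally the filtrations agree, and the Hasse map being filtration-preserving and injective on homotopy forces the global filtration to agree as well. This rules out the hidden extension and shows $v_0$-multiplication represents multiplication by $2$ over $\QQ$.

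The main obstacle, I expect, is bookkeeping about Adams filtration under the Hasse map: one must check that $H_{\BPn{n}}$ does not \emph{raise} filtration in a way that could manufacture a spurious hidden extension downstairs while the local extensions are ``honest.'' Concretely one wants: if $y\in\pi_\star\BPn{n}_\QQ$ is detected in filtration $s$ by a class $\bar y\in E_\infty^{s,\star}(\QQ)$, and $\prod_v H$ sends $y$ to elements detected in filtration exactly $s$ in each factor (which holds once $\bar y$ has nonzero image in $\prod_v E_\infty(\QQ_v)$), then injectivity of $H_{\BPn{n}}$ on homotopy combined with the injectivity already used in proving Theorem~\ref{thm:Hasse} on $E_\infty$-terms pins the filtration down. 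So the argument reduces to the (already-available) fact that the Hasse map is injective on $E_\infty$-pages in the relevant degrees, which is part of the proof of the Hasse principle. Once that is in hand, the absence of hidden multiplicative extensions and the identification of $v_0$ with multiplication by $2$ follow formally, and the remaining multiplicative structure of $\pi_\star\BPn{n}_\QQ$ is then read off from $E_\infty$ exactly as in the local cases.
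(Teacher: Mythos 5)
Your proposal is correct and follows essentially the same route as the paper: the corollary is deduced from the known local results of Section~\ref{sec:comp} together with the injectivity of the Hasse map from Theorem~\ref{thm:Hasse}, with your filtration bookkeeping merely making explicit what the paper leaves implicit. The only ingredient the paper adds that you omit is the parenthetical observation that in general $v_0 = 2 + \rho\eta$, but $\eta = 0$ in $\pi_\alpha\BPn{n}$, so $v_0$ genuinely represents multiplication by $2$.
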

\begin{proof}
The computations with local fields in Section \ref{sec:comp} show the result holds for all completions of $\QQ$. 
Thus our claim follows from Theorem \ref{thm:Hasse}.  (Note that, in
general, $v_0 = 2+\rho\eta$, but $\eta = 0$ in $\pi_\alpha \BPn{n}$.)
\end{proof}

Our proof of Theorem \ref{thm:Hasse} follows from an analysis of
(\ref{eqn:MASSHasse}).  Carrying that analysis a few steps further we
also get a computation of $\pi_\star\BPn{n}$ over $\QQ$, $0\le n\le
\infty$, which is stated in Theorem \ref{thm:BPnQ} below.

To get these computations off the ground we need a detailed
understanding of $k^M_*(\QQ)$ and the Hasse map
\[
  k^M_*(\QQ) \to \prod_v k^M_*(\QQ_v).
\]
The following proposition consists of basic facts easily deduced from,
e.g., \cite[Example 1.8 and Appendix]{MilnorK}.

\begin{prop}\label{prop:kMQ}
The mod 2 Milnor K-theory of $\QQ$ has the following structure:
\[
\begin{aligned}
  k^M_0(\QQ) &= \ZZ/2,\\
  k^M_1(\QQ) &= \ZZ/2\{\rho\} \oplus \bigoplus_{p\ge 2} \ZZ/2\{[p]\},\\
  k^M_2(\QQ) &= \ZZ/2\{\rho^2\} \oplus \bigoplus_{p\ge 3}
  \ZZ/2\{a_p\},\\
  k^M_n(\QQ) &= \ZZ/2\{\rho^n\}\text{ if }n\ge 3.
\end{aligned}
\]
Multiplication follows the rule
\[
  [\ell]\cdot [q] = (\ell,q)_2\rho^2 + \sum_{p\ge 3}(\ell,q)_p a_p
\]
where $\ell$ and $q$ are primes or $-1$, $(~,~)_p$ is the Hilbert symbol if $p\ge 3$, and $(~,~)_2$ is the
2-adic Steinberg symbol.

The Hasse map takes pure symbols to their obvious images in $\prod
k^M_1(\QQ_v)$ and takes $a_p$ to the unique nonzero class in
$k^M_2(\QQ_p)$ and to $0$ in $k^M_2(\QQ_\ell)$, $\ell\ne p$.
\end{prop}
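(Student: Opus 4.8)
The plan is to verify the structure of $k^M_*(\QQ)$ by comparing the (mod 2) Milnor $K$-theory localization sequence with its classical ($K_0$, $K_1$) analogue, and then to pin down both the multiplicative rule and the Hasse map by invoking reciprocity. First I would recall the exact localization sequence in Milnor $K$-theory for $\Spec\ZZ \subset \Spec\QQ$: tensoring with $\ZZ/2$, the residue (tame symbol) maps fit into
\[
  0 \to k^M_n(\ZZ) \to k^M_n(\QQ) \xrightarrow{\ \partial\ } \bigoplus_p k^M_{n-1}(\FF_p) \to 0,
\]
where the prime $2$ contributes $k^M_{n-1}(\FF_2) = 0$ for $n\ge 2$ since $\FF_2^\times$ is trivial, and $k^M_{n-1}(\FF_p) = \ZZ/2$ for $n=2$, $p$ odd, while $k^M_{n-1}(\FF_p)=0$ for $n\ge 3$. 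Combining this with $k^M_0(\ZZ)=\ZZ/2$, $k^M_1(\ZZ)=\ZZ/2\{-1\}$, and $k^M_n(\ZZ)=\ZZ/2\{\rho^n\}$ for $n\ge 2$ (the last facts being the mod 2 reductions of $K_1(\ZZ)=\{\pm1\}$ together with Tate's computation, or directly Milnor's appendix) immediately yields the four displayed groups: $k^M_1(\QQ)$ picks up one $\ZZ/2$ for each prime $p$ (including $p=2$, via $[2]$), $k^M_2(\QQ)$ picks up one $\ZZ/2$ $a_p$ for each odd $p$ (and nothing new from $2$, so $\rho^2$ is the only ``local at 2'' part), and $k^M_n(\QQ) = \ZZ/2\{\rho^n\}$ for $n\ge 3$. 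I would take $a_p$ to be the class with $\partial_p(a_p)$ the generator of $k^M_1(\FF_p)$ and $\partial_\ell(a_p)=0$ for $\ell\ne p$.

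Next I would establish the multiplication rule $[\ell]\cdot[q] = (\ell,q)_2\rho^2 + \sum_{p\ge 3}(\ell,q)_p a_p$. The point is that the coefficient of $a_p$ (for odd $p$) in $[\ell]\cdot[q]$ is detected by the tame symbol $\partial_p$, which by definition of the tame symbol on $\QQ^\times$ sends $\{\ell,q\}$ to the image of the Hilbert symbol data at $p$; more precisely, $\partial_p\{\ell,q\}$ is nonzero in $k^M_1(\FF_p)=\ZZ/2$ exactly when the quaternion algebra $(\ell,q)_p$ is ramified at $p$, i.e.\ when the local Hilbert symbol $(\ell,q)_p = 1$ (in additive $\ZZ/2$ notation). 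For the coefficient of $\rho^2$ (the part supported at the prime $2$, and the unique part surviving all odd tame symbols), one uses that $k^M_2(\QQ)/(\text{odd }a_p\text{'s}) \cong k^M_2(\QQ_2) = \ZZ/2$ via the completion map at $2$, and under this identification the product $[\ell][q]$ goes to the 2-adic Steinberg symbol $(\ell,q)_2$; this is exactly the content of Milnor's Example 1.8 / Appendix for the local field $\QQ_2$. So the rule follows by checking it after applying $\partial_p$ for all odd $p$ and after completing at $2$, and these maps are jointly injective on $k^M_2(\QQ)$ by the localization sequence.

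Finally, the Hasse map statement is essentially a restatement of the above. For $n=1$ a pure symbol $[p]$ (or $\rho=[-1]$) is by construction sent to the collection of its images in each $k^M_1(\QQ_v)$ --- there is nothing to prove beyond functoriality of completion. For $n=2$: $a_p$ maps into $k^M_2(\QQ_\ell)$, and since $a_p$ lies in the kernel of $\partial_\ell$ for all $\ell\ne p$, and since for $\ell\ne p$ the completion $k^M_2(\QQ)\to k^M_2(\QQ_\ell)$ factors (up to the contribution of $\rho^2$, which is killed because $\rho^2=0$ in $k^M_2(\QQ_\ell)$ for $\ell\equiv 1\pmod 4$ and more care is needed for $\ell\equiv 3\pmod4$ and $\ell=2$) through the residue data, one gets that $a_p\mapsto 0$ in $k^M_2(\QQ_\ell)$; at the place $p$ itself $a_p$ maps to the generator of $k^M_2(\QQ_p)=\ZZ/2$, which one sees by noting $\partial_p$ detects $a_p$ and agrees with the tame symbol on $\QQ_p$. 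I expect the genuine obstacle to be bookkeeping the place $v=2$ and the real place $v=\infty$ correctly: the local structure of $k^M_*(\QQ_2)$ recorded earlier in the paper has $\rho^2 = xy \ne 0$, so one must be careful that $a_p$ for odd $p$ really does die in $k^M_2(\QQ_2)$ (it does, since its only potential nonzero image would be a multiple of $\rho^2$, but $\partial$-compatibility and a Hilbert-symbol reciprocity check at $2$ rule this out), and similarly that the archimedean place behaves as expected. All of this is standard and amply covered by \cite[Example 1.8 and Appendix]{MilnorK}; the proof is a matter of assembling the localization sequence, the reciprocity/product formula for Hilbert symbols, and the local computations already cited.
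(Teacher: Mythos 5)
Your proposal is correct and follows essentially the same route as the paper, which offers no proof beyond the citation to Milnor's Example 1.8 and Appendix: the split localization sequence for $\Spec\ZZ\subset\Spec\QQ$ gives the additive structure, the odd tame symbols together with completion at $2$ detect everything in degree $2$, and Hilbert symbols give the multiplication and the Hasse images. The one loose end is the normalization of $a_p$: the tame-symbol characterization determines $a_p$ only up to adding $\rho^2$, and Hilbert reciprocity alone does not rule out the possibility that your chosen class is ramified at $\{p,2\}$ rather than $\{p,\infty\}$; you must \emph{choose} $a_p$ to be the lift that dies in $k^M_2(\QQ_2)$ (adjusting by $\rho^2$ if necessary), after which reciprocity forces it to be nontrivial at the real place and your identification $k^M_2(\QQ)/(a_p\text{'s})\cong k^M_2(\QQ_2)$, and hence the $\rho^2$-coefficient formula, goes through as stated.
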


It will be convenient to understand the
$\rho$-module structure of $k^M_*(\QQ)$.

\begin{prop}\label{prop:rhoModule}
The $\rho$-module structure of $k^M_*(\QQ)$ is such that $\rho$ is not nilpotent and
\[
\begin{aligned}
  \rho\cdot [p] &= 0 \text{ if }p\equiv 1\pod{4},\\
  \rho \cdot [p] &= \rho^2 + a_p \text{ if }p\equiv 3\pod{4},\\
  \rho\cdot [2] &= 0.
\end{aligned}
\]
\end{prop}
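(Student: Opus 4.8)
The plan is to compute each product $\rho\cdot[p]$ in $k^M_*(\QQ)$ directly from the multiplication rule stated in Proposition \ref{prop:kMQ}, taking $\ell = -1$ and $q = p$ so that $\rho\cdot[p] = (-1,p)_2\rho^2 + \sum_{q\ge 3}(-1,p)_q a_q$. The non-nilpotence of $\rho$ is immediate from the last clause of Proposition \ref{prop:kMQ}, which says $k^M_n(\QQ) = \ZZ/2\{\rho^n\}$ for all $n\ge 3$; in particular $\rho^n\ne 0$ for every $n$. So the content of the proposition is the three displayed identities, each of which reduces to evaluating Hilbert symbols $(-1,p)_q$ at the relevant places $q$.

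First I would dispose of the easy places. For an odd prime $q$ with $q\ne p$, the Hilbert symbol $(-1,p)_q = 1$ (i.e.\ trivial) because both $-1$ and $p$ are $q$-adic units and $-1$ is a square mod $q$ exactly when $q\equiv 1\pmod 4$; more to the point, $(-1,p)_q$ vanishes whenever $-1$ is a norm from $\QQ_q(\sqrt p)$, which holds since $p$ is a unit and the relevant computation only sees $q$. The upshot is that the only potentially nonzero contributions to $\rho\cdot[p]$ come from the place $2$ (contributing $\rho^2$) and from the place $p$ itself (contributing $a_p$), and moreover $(-1,p)_2 = (-1,p)_p$ by Hilbert reciprocity $\prod_q (-1,p)_q = 1$ together with the vanishing at all other finite places and at $\infty$ (where $(-1,p)_\infty = 1$ since $p>0$). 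Hence $\rho\cdot[p]$ is either $0$ or $\rho^2 + a_p$, and which of the two occurs is governed by the single symbol $(-1,p)_2$.

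Next I would evaluate $(-1,p)_2$: this symbol is nontrivial precisely when $-1$ is not a square in $\QQ_2^\times/(\QQ_2^\times)^2$ modulo the norms from $\QQ_2(\sqrt p)$, and the standard formula gives $(-1,p)_2 = (-1)^{(p-1)/2}$ for odd $p$, so it is trivial when $p\equiv 1\pmod 4$ and nontrivial when $p\equiv 3\pmod 4$. This yields $\rho\cdot[p] = 0$ for $p\equiv 1\pmod 4$ and $\rho\cdot[p] = \rho^2 + a_p$ for $p\equiv 3\pmod 4$. Finally, for the prime $2$ itself, $\rho\cdot[2]$ lands in $k^M_2(\QQ)$, and by Proposition \ref{prop:kMQ} it equals $(-1,2)_2\rho^2 + \sum_{q\ge 3}(-1,2)_q a_q$; here $(-1,2)_q = 1$ for every odd $q$ since $-1$ and $2$ are both $q$-adic units with $2$ a square mod $q$ or $-1$ a square mod $q$ as needed, and $(-1,2)_2 = 1$ because $2$ is a norm from $\QQ_2(\sqrt{-1}) = \QQ_2(i)$ (indeed $2 = (1+i)(1-i) = N(1+i)$). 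Therefore $\rho\cdot[2] = 0$.

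I expect the main obstacle to be purely bookkeeping: one must be careful that the ``$(~,~)_2$'' appearing in Proposition \ref{prop:kMQ} is the $2$-adic Steinberg symbol landing in $k^M_2(\QQ)$ rather than a Hilbert symbol landing in $\ZZ/2$, and that the indexing of the $a_p$ matches the convention there (only odd primes contribute an $a_p$, with $\rho^2$ absorbing the ``$p=2$'' slot). Everything else is a finite verification of well-known Hilbert symbol values together with one application of reciprocity; no spectral sequences or homotopy theory enter. As a sanity check one can note consistency with Proposition \ref{prop:rhoModule} being used later: the fact that $\rho\cdot[p]\ne 0$ for $p\equiv 3\pmod 4$ but $\rho^2\ne 0$ and $a_p$ is independent of $\rho^2$ means $[p]$ is not $\rho$-torsion in those cases, which is exactly the asymmetry that drives the local-to-global differential analysis.
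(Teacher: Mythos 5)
Your proof is correct and takes essentially the same route as the paper: everything reduces to evaluating the Hilbert/Steinberg symbols $(-1,p)_v$, the symbols at odd places $q\ne p$ vanish because both entries are units there (this is exactly what the paper's explicit formula for $(a,b)_\ell$ encodes), and $(-1,p)_2=(-1)^{(p-1)/2}$ decides the two cases. The only cosmetic difference is that you deduce $(-1,p)_p$ from Hilbert reciprocity, whereas the paper computes it directly via the Legendre symbol and the first supplement to quadratic reciprocity.
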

\begin{proof}
The class $\rho$ is non-nilpotent because $\QQ$ has a real embedding.
(Note that Propositions \ref{prop:kMQ} and \ref{prop:rhoModule} omit a
few multiplicative relations in $k^M_*(\QQ)$ that are not important in
any of our calculations.)

The relations follow from computations of Hilbert and 2-adic Steinberg symbols.  Recall that for $a = 2^\alpha u$, $b = 2^\beta v$ for $u,v$ odd we have
\[
  (a,b)_2 = (-1)^{\frac{u-1}{2}\frac{v-1}{2}+\alpha\frac{v^2-1}{8}+\beta\frac{u^2-1}{8}}.
\]
Hence
\[
  (-1,p)_2 = (-1)^{-\frac{p-1}{2}} =
  \begin{cases}
    1&\text{if }p\equiv 1\pod{4},\\
    -1&\text{if }p\equiv 3\pod{4}
  \end{cases}
\]
for $p$ odd while $(-1,2)_2 = 1$.

To compute the $\ell$-adic Hilbert symbol for $\ell$ odd, write $a = \ell^\alpha u$, $b = \ell^\beta v$ for $p\nmid u,v$.  Then
\[
  (a,b)_\ell = (-1)^{\alpha\beta\frac{\ell-1}{2}} \left(\frac{u}{\ell}\right)^\beta \left(\frac{v}{\ell}\right)^\alpha
\]
where $\left(\frac{~}{~}\right)$ denotes the Legendre symbol.  Hence for $p$ odd
\[
  (-1,p)_p = \left(\frac{-1}{p}\right) =
  \begin{cases}
    1&\text{if }p\equiv 1\pod{4},\\
    -1&\text{if }p\equiv 3\pod{4}
  \end{cases}
\]
by the first supplement to quadratic reciprocity.  We also have
$(-1,2)_p = 1$.  This is enough to check the relations by Proposition \ref{prop:kMQ}.
\end{proof}

Following the usual pattern, we begin our motivic computations with the $\rho$-BSS computing
$\Ext_{\mathcal{E}(n)}$.

\begin{thm}\label{thm:rhoBSSQ}
The $\rho$-BSS for $\Ext_{\mathcal{E}(n)}$ over $\QQ$ is determined by
the differentials
\[
  d_{2^{i+1}-1}\tau^{2^i} = \rho^{2^{i+1}-1}v_i
\]
for $0\le i\le n$.  In addition to the obvious differentials that are
also present in the $\rho$-BSS over $\RR$, $d_1\tau =
\rho v_0$ induces differentials
\[
\begin{aligned}
  d_1 [p]\tau &= 0\text{ if }p\equiv 1\pod{4}\text{ or }p=2,\\
  d_1 [p]\tau &= (\rho^2+a_p) v_0\text{ if }p\equiv 3\pod{4}.
\end{aligned}
\]
\end{thm}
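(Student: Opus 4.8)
The plan is to compute the $\rho$-Bockstein differentials by working directly in the cobar complex for $\mathcal{E}(n)$ over the ground ring $\M_\star = k^M_*(\QQ)[\tau]$, using that the $d_1$-differential is given by the difference $\eta_R - \eta_L$ of the two units in the Hopf algebroid. First I would recall from Definition \ref{defn:En} and the formula $\eta_R(\tau) = \tau + \rho\tau_0$ that $d_1\tau = \rho\tau_0$, which is detected on $E_1 = \FF_2[\rho,\tau,v_0,\ldots,v_n] \otimes \text{(stuff)}$ — wait, more precisely $E_1 = k^M_*(\QQ)[\tau,v_0,\ldots,v_n]$ with $v_i$ represented by $\tau_i$ — as $d_1\tau = \rho v_0$. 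The higher differentials $d_{2^{i+1}-1}\tau^{2^i} = \rho^{2^{i+1}-1}v_i$ for $1 \le i \le n$ are forced exactly as in the real case (Proposition \ref{prop:ExtEnR} and Hill's analysis \cite{Hill}): the relations $\tau_i^2 = \rho\tau_{i+1}$ in $\mathcal{E}(n)$ propagate the leading $\rho v_0$ differential up the tower of $\tau$-powers, and since $\mathcal{E}(n)$ over $\QQ$ agrees with $\mathcal{E}(n)$ over $\RR$ after the ring map $k^M_*(\QQ) \to \FF_2[\rho]$ that kills all the $[p]$'s and $a_p$'s, the comparison of $\rho$-BSSs transports these differentials verbatim.

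The genuinely new content is the behavior on the classes $[p]\tau$, which have no analogue over $\RR$. Here I would use the Leibniz rule for the $\rho$-BSS differential together with the fact, recorded in the discussion of $\mathcal{A}_\star$ before Remark \ref{rmk:graph1}, that the Milnor $K$-theory classes $[p] \in k^M_1(\QQ) \subset \M_\star$ are \emph{primitive} (and in particular $\rho$-power-filtration zero, so $d_r[p] = 0$ for all $r$). Consequently $d_1([p]\tau) = [p]\cdot d_1\tau = [p]\cdot\rho v_0 = (\rho\cdot[p])\,v_0$. Now I invoke Proposition \ref{prop:rhoModule}: $\rho\cdot[p] = 0$ when $p \equiv 1 \pmod 4$ or $p = 2$, giving $d_1([p]\tau) = 0$; and $\rho\cdot[p] = \rho^2 + a_p$ when $p \equiv 3 \pmod 4$, giving $d_1([p]\tau) = (\rho^2 + a_p)v_0$. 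This is precisely the asserted formula. One should double-check that in the $p \equiv 3$ case the class $(\rho^2+a_p)v_0$ is a genuine nonzero cycle on $E_1$ not already hit — it is, since $\rho^2 + a_p$ lives in $k^M_2(\QQ)$, is $\rho$-filtration zero, and the only incoming $d_1$ in that bidegree would have to come from a $\tau$-multiple, which it does; the resulting pattern of surviving classes is then read off in the subsequent theorems.

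The main obstacle is bookkeeping rather than conceptual: one must verify that the $[p]\tau$ differentials do not interact with the $\tau^{2^i}$ differentials in a way that changes the $E_2$-page structure, i.e., that $(\rho^2+a_p)v_0$ is genuinely a nontrivial boundary and is not itself the source of a later differential, and that the filtration degrees are consistent (the Leibniz computation takes place entirely in $\rho$-filtration $\le 1$, so there is no clash with the $d_{2^{i+1}-1}$ differentials which raise filtration by more). I would also note that the statement as phrased only lists the $d_1$-induced differentials on the $[p]\tau$ classes; higher-power classes $[p]\tau^{2^i}$ and products of several $[p]$'s are handled by the same Leibniz argument combined with Propositions \ref{prop:kMQ} and \ref{prop:rhoModule}, and I would remark that these are subsumed into the closed-form answer for $\Ext_{\mathcal{E}(n)}$ over $\QQ$ derived in the following results. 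The comparison map to the local $\rho$-BSSs over $\RR$ and $\QQ_p$ serves as a useful consistency check throughout: applying the Hasse map $k^M_*(\QQ) \to \prod_v k^M_*(\QQ_v)$ sends these differentials to the known local ones, matching Proposition \ref{prop:ExtEnR} and Theorem (the $\QQ_p$ computation) termwise.
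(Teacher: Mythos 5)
Your proposal is correct and follows essentially the same route as the paper, which simply cites Hill's cobar-complex analysis of $\eta_R(\tau^{2^i})$ for the $\tau$-power differentials together with Proposition \ref{prop:rhoModule} for the $\rho$-module structure of $k^M_*(\QQ)$; your Leibniz-rule computation $d_1([p]\tau)=(\rho\cdot[p])v_0$ using primitivity of the Milnor $K$-theory classes is exactly the intended argument, just written out in full.
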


Most of this spectral sequence looks exactly like the $\rho$-BSS for
$\Ext_{\mathcal{E}(n)}$ over $\RR$, and the portion pertaining to
classes involving $p\equiv 3\pod{4}$ is depicted graphically in the
first part of Figure \ref{fig:A}.

\begin{proof}
This follows from the arguments of \cite[Theorem 3.2]{Hill} and Proposition \ref{prop:rhoModule}.
\end{proof}

\begin{cor}\label{cor:E2Hasse}
On $E_2$-terms, the Hasse map (\ref{eqn:MASSHasse}) of motivic Adams
spectral sequences for $\BPn{n}$ is injective.
\end{cor}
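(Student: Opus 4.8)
The plan is to reduce the injectivity on $E_2$-terms to the injectivity of the Hasse map on mod $2$ Milnor $K$-theory, which is a known input (Proposition \ref{prop:kMQ}). By Theorem \ref{thm:E2BPn}, the $E_2$-page of the MASS for $\BPn{n}$ over any field $k$ is $\Ext_{\mathcal{E}(n)}(\M_\star,\M_\star)$, computed via the $\rho$-BSS. So the strategy is to first establish injectivity at the level of the $E_\infty$-pages of the $\rho$-BSS (i.e.\ on the associated graded of $\Ext_{\mathcal{E}(n)}$ with respect to the $\rho$-power filtration), and then argue that injectivity on associated graded plus compatibility of filtrations forces injectivity on $\Ext$ itself.

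First I would set up the map of $\rho$-Bockstein spectral sequences induced by the Hasse map, using that $\mathcal{E}(n)$ is defined over $\M_\star$ and the Hasse map on $\M_\star = k^M_*[\tau]$ is filtered for the $\rho$-filtration. On $E_1$-pages the $\rho$-BSS over $\QQ$ is $k^M_*(\QQ)[\tau,v_0,\ldots,v_n]$ and over each $\QQ_v$ it is $k^M_*(\QQ_v)[\tau,v_0,\ldots,v_n]$; the induced map on $E_1$ is $(\text{Hasse on }k^M_*)\otimes \mathrm{id}$ on the polynomial part, hence injective since $k^M_*(\QQ)\hookrightarrow \prod_v k^M_*(\QQ_v)$ by Proposition \ref{prop:kMQ} and the $\tau, v_i$ are sent to their counterparts. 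Then I would run the differentials in parallel: Theorem \ref{thm:rhoBSSQ} records the $\QQ$-differentials, and the local differentials appear in Proposition \ref{prop:ExtEnR} (the $\RR$ case), the $\QQ_p$ results of Section \ref{sec:comp}, and the $\QQ_2$ computation. The key compatibility is that each $\QQ$-differential $d_{2^{i+1}-1}\tau^{2^i} = \rho^{2^{i+1}-1}v_i$ and each induced differential on $[p]\tau$ maps to the corresponding differential over the relevant completion (over $\QQ_\ell$ for $\ell \neq p$ the class $[p]$ or $a_p$ dies, consistent with the target). Because the source differentials are detected in the product of target differentials, a diagram chase (kernel of a map of spectral sequences with injective $E_1$ and "detected" differentials) gives injectivity on every $E_r$, hence on $E_\infty$ of the $\rho$-BSS.

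Finally I would promote injectivity on the associated graded to injectivity on $\Ext_{\mathcal{E}(n)}(\M_\star,\M_\star)$: the $\rho$-power filtration is exhaustive and Hausdorff in each bidegree (it is a filtration by powers of a single element on a module that is, degreewise, of finite length in the relevant range — or more simply, bounded in each fixed tri-degree), so a filtered map that is injective on $\mathrm{gr}$ is injective. This last bookkeeping step — checking that the filtration is well-behaved enough in each tri-degree, and that the map of $\rho$-BSSs really is filtration-preserving with the stated effect on differentials — is the main obstacle, though it is more a matter of care than of genuine difficulty; the arithmetic input (injectivity of $k^M_*(\QQ)\to\prod k^M_*(\QQ_v)$) does all the real work.
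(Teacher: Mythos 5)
Your overall route is the paper's: injectivity on the $E_1$-page of the $\rho$-Bockstein spectral sequence coming from Proposition \ref{prop:kMQ}, propagation of injectivity through the pages, and then passage from the associated graded to $\Ext_{\mathcal{E}(n)}$. (The paper leaves this last filtration step implicit; making it explicit is harmless, since the $\rho$-filtration is bounded in each tridegree for all the fields in play.)

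The one genuine problem is the condition you invoke to propagate injectivity from $E_r$ to $E_{r+1}$. You say the chase works ``because the source differentials are detected in the product of target differentials,'' i.e.\ $d_r x\neq 0$ forces $d_r H(x)\neq 0$. That implication is automatic from injectivity on $E_r$ (since $H$ commutes with $d_r$) and is not what is needed. What is needed is the opposite inclusion, phrased on boundaries rather than on sources of differentials: if $x$ is a $d_r$-cycle over $\QQ$ and $H(x)$ is a $d_r$-boundary in $\prod_v E_r(\QQ_v)$, then $x$ must already be a $d_r$-boundary over $\QQ$ --- exactly the condition the paper isolates (``every local boundary in the image of the Hasse map is the Hasse image of a global boundary''). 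Without it, injectivity can fail at once: take $E_1=\FF_2\{x,y\}$ with $d_1=0$, $E_1'=\FF_2\{x',y'\}$ with $d_1x'=y'$, and $f(x)=x'$, $f(y)=y'$; then $f$ is injective on $E_1$ and every (trivial) source differential is detected, yet $f$ kills $y$ on $E_2$. Concretely for $\BPn{n}$ the required check is a divisibility statement in Milnor $K$-theory --- e.g.\ that a global class all of whose local images lie in $\rho\cdot k^M_*(\QQ_v)\cdot v_0$ (resp.\ in the images of the higher $d_r$'s) already lies in the corresponding global image --- and this is the ``inspection'' that does the work beyond mere injectivity of $k^M_*(\QQ)\to\prod_v k^M_*(\QQ_v)$. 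Your parallel bookkeeping of the local and global differentials is the right data for that check, but the propagation lemma must be stated in the boundary direction for the argument to close.
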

\begin{proof}
It is clear that the Hasse map is injective on $E_1$-terms of
$\rho$-Bockstein spectral sequences.  To show that injectivity is
preserved by the spectral sequences, we must show that every local boundary
which is in the image of the Hasse map is in fact the Hasse image of a
global boundary.  This is obvious by inspection of the spectral
sequences in question.
\end{proof}

Before moving on to the proof of Theorem \ref{thm:Hasse} we use
(\ref{eqn:MASSHasse}) and our computation of the $E_2$-term of the
MASS for $\BPn{n}$ over $\QQ$ to tie up a loose end from Section
\ref{sec:comp}.  Note that thus far none of the results in this
section have depended on the following lemma.

\begin{lemma}\label{lem:x}
In the MASS for $\BPn{n}$ over $\QQ_2$, the differentials take the form
\[
  d_{2+i}\tau^{2^i} = z\tau^{2^i-1}v_0^{2+i}
\]
with $z = x\in k^M_1(\QQ_2)$.
\end{lemma}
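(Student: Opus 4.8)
The plan is to pin down $z \in k^M_1(\QQ_2) = \FF_2\{\rho, x, y\}$ by transporting information from the global field $\QQ$ through the Hasse map. We already know from the previous theorem that $z$ is a nonzero $\FF_2$-linear combination of $x$ and $y$ (the class $\rho$ is excluded since $\rho v_0 = 0$ already accounts for the $\rho$-BSS differential, and a $\rho$-component would produce the wrong torsion pattern in $(\M\ZZ_2)_\star$ over $\QQ_2$). So it remains to distinguish between the three possibilities $z \in \{x, y, x+y\}$.

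First I would locate a global class in the MASS for $\BPn{n}_\QQ$ whose Hasse image detects the relevant $\QQ_2$-differential. The natural candidate is $[2]\tau$ (or more precisely the class built from $[2] \in k^M_1(\QQ)$ and $\tau$): by Theorem \ref{thm:rhoBSSQ}, $[2]\tau$ survives the $\rho$-BSS over $\QQ$ since $d_1([2]\tau) = 0$, so it gives a class on the $E_2$-page of the MASS over $\QQ$. Now apply the Hasse map. Under the localization at $v = 2$, the class $[2] \in k^M_1(\QQ)$ maps to $x \in k^M_1(\QQ_2)$ by Proposition \ref{prop:kMQ} (the class of the prime $2$ is exactly the uniformizer class, which is $x$). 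Under localization at any odd place $\ell$, $[2]$ maps to the class of a unit, which is either $0$ (if $2$ is a square mod $\ell$) or the nonsquare-unit class $u \in k^M_1(\QQ_\ell)$.

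The key step is then to track $\tau^{2^i}$-type differentials through this comparison. Over $\QQ_2$ the generic pattern from the theorem we are proving forces $d_{2+i}\tau^{2^i} = z\tau^{2^i-1}v_0^{2+i}$; multiplying by $[2]$, and using that $[2]$ is a permanent cycle coming from $k^M_*$ (Milnor $K$-theory classes are primitive, hence permanent cycles in the $\rho$-BSS and MASS), we get $d_{2+i}([2]\tau^{2^i}) = [2]\cdot z\cdot \tau^{2^i-1}v_0^{2+i}$ over $\QQ_2$. But $[2]\cdot x = x^2 = 0$ in $k^M_*(\QQ_2)$, whereas $[2]\cdot y$ and $[2]\cdot(x+y)$ are nonzero (indeed $xy = \rho^2 \ne 0$). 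Meanwhile, over $\QQ$ I would compute $d_1$ and the later MASS differentials on $[2]\tau$, $[2]\tau^2$, etc., using Theorem \ref{thm:rhoBSSQ} and the multiplicative structure, and check that $[2]\tau^{2^i}$ in fact supports \emph{no} differential hitting a $v_0$-power multiple of a nonzero $k^M_2(\QQ)$-class (because $[2]\cdot(\text{the relevant symbol})$ vanishes in $k^M_*(\QQ)$ by the Hilbert symbol computations: $(2,2)_2 = (-1)^{(2^2-1)/8} = (-1)^0\cdots$ — more to the point $[2]^2 = (2,2)_2\rho^2 + \sum_{p}(2,2)_p a_p$ and one checks this is zero). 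Then injectivity of the Hasse map on $E_2$-terms (Corollary \ref{cor:E2Hasse}), together with compatibility of differentials, forces $[2]\cdot z = 0$ in $k^M_*(\QQ_2)$, which among the candidates $\{x, y, x+y\}$ singles out $z = x$.

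The main obstacle I anticipate is bookkeeping: making sure that the class $[2]\tau^{2^i}$ genuinely survives far enough in \emph{both} the $\QQ$-MASS and the $\QQ_2$-MASS for the comparison of the $d_{2+i}$ differentials to be meaningful, and that no earlier differential (in particular no $d_1$-induced differential from Theorem \ref{thm:rhoBSSQ}) has already killed or hit it. This requires carefully threading through the $\rho$-BSS computation over $\QQ$ for the classes built from $[2]$ and verifying that $[2]$ lies in the ``$p \equiv 1 \pmod 4$ or $p = 2$'' case of Theorem \ref{thm:rhoBSSQ} at every relevant stage. Once that is in hand, the algebra $x^2 = 0 \ne xy$ does all the remaining work, and the result that $z = x$ (rather than $y$ or $x+y$) follows.
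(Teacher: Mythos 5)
Your reduction to $z\in\{x,y,x+y\}$ matches the paper's setup, and the local algebra you want to exploit ($x\cdot x=0$ while $x\cdot y=x\cdot(x+y)=\rho^2\neq 0$ in $k^M_*(\QQ_2)$) is correct; moreover the target class $\rho^2\tau^{2^i-1}v_0^{2+i}$ really is nonzero on $E_{2+i}$ over $\QQ_2$ (the odd $\tau$-power keeps it out of the image of $d_1\tau=\rho v_0$), so the test itself is not vacuous. The genuine gap is in the step where you claim the \emph{global} differential $d_{2+i}([2]\tau^{2^i})$ vanishes ``because $[2]\cdot(\text{the relevant symbol})=0$.'' By Leibniz this differential is $[2]\cdot w\cdot\tau^{2^i-1}v_0^{2+i}$, where $w\in k^M_1(\QQ)$ is the coefficient of the unknown global differential on $\tau^{2^i}$; your justification $[2]^2=0$ only applies if you already know $w=[2]$, which is essentially the statement being proved (indeed $w|_{\QQ_2}=z$). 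Note that $[2]\cdot[p]=\rho^2+a_p$ restricts to $\rho^2\neq 0$ in $k^M_2(\QQ_2)$ for $p\equiv 3,5\pmod 8$, so $[2]w=0$ is genuinely false for many candidate $w$. The circularity is exposed by symmetry: since $[5]^2=\rho[5]=0$ as well (Proposition \ref{prop:rhoModule}), your argument applied verbatim with $[5]$ in place of $[2]$ would ``prove'' $yz=0$ and hence $z=y$, contradicting $z=x$. (In fact $[5]\tau^{2^i}$ \emph{does} support a nonzero global $d_{2+i}$, with target involving $a_5$.)

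To break the circularity you must feed in the known local differentials at odd places, which your proposal never does in the decisive step; Corollary \ref{cor:E2Hasse} alone (injectivity on $E_2$) cannot tell you that a global class supports no differential. This is exactly what the paper's proof does, and more economically: it shows $\tau^{2^i}v_0$ survives to $E_{2+i}$ over $\QQ$ and then evaluates the Hasse image of its global differential at the single place $\QQ_5$, where Theorem \ref{thm:BPnQp} says the target is $u\tau^{2^i-1}v_0^{3+i}$ with $u$ the image of $[2]$ (a pure unit class, with no uniformizer $[5]$-component); any $y=[5]$-component in $z$ would force a $[5]$-component in the global target and hence a uniformizer component at $\QQ_5$, a contradiction. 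If you want to salvage your multiplicative approach, you would need to first pin down $w$ at all odd places using Theorem \ref{thm:BPnQp} (which determines $w$ up to the kernel of $k^M_1(\QQ)\to\prod_{v\neq 2}k^M_1(\QQ_v)$, namely $\{0,[2]\}$) --- but at that point you have already done the paper's argument and the $[2]$-multiplication is superfluous.
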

\begin{proof}
Assume for contradiction that $z = \epsilon x + y$ with $\epsilon = 0$ or $1$.  Recall that $x = [2]$ and $y = [5]$.  We claim that $\tau^{2^i}v_0$ survives to the $E_{2+i}$ page of the MASS for $\BPn{n}$ over $\QQ$.  Let $H^{\QQ_v}$ denote the projection of $H_{\BPn{n}}$ onto the $\QQ_v$ factor in the image of the Hasse map.  Then $H^{\QQ_5}d^\QQ_{2+i}\tau^{2^i}v_0 = d^{\QQ_5}_{2+i}\tau^{2^i}v_0 = (\epsilon [2]+[5])\tau^{2^i}v_0^{3+i}$.  From Theorem \ref{thm:BPnQp}, though, we know that this differential actually hits $[2]\tau^{2^i}v_0^{3+i}$, producing a contradiction.  (Note that $2$ generates the Teichm\"uller lift in $\QQ_5$.)

It remains to show that $\tau^{2^i}v_0$ survives to the $E_{2+i}$-page when working over $\QQ$.  Suppose not.  For tri-degree reasons, $\tau^{2^i}v_0$ would have to support a differential (rather than being the target), and it cannot support any lower differentials because they would be detected locally yet no such local differentials exist.
\end{proof}

We now aim to prove Theorem \ref{thm:Hasse} via analysis of
(\ref{eqn:MASSHasse}).

\begin{proof}[Proof of Theorem \ref{thm:Hasse}]
As in the proof of Corollary \ref{cor:E2Hasse}
we must show that on each page of the MASS every local boundary in the
image of $H_{\BPn{n}}$ is in fact the Hasse image of a global
boundary.  For induction, suppose that we have proven injectivity
on the $E_r$-page for some $r\ge 2$.  Suppose that $d_r(x) = y =
H(\tilde{y})$ in $\prod_v E_r^{*,\star}$.  We must verify that there is a global
differential $d_r(\tilde{x}) = \tilde{y}$.

We first go about constructing $\tilde{x}$ as an element of $E_2$ of the global MASS.  Note that the MASS over $\RR$ collapses and differentials over $\QQ_p$ take the form
\begin{equation}\label{eqn:a}
  d_r^{\QQ_p} \tau^{2^ij}\underline{v}^K = u'\tau^{2^ij-1}v_0^r\underline{v}^K
\end{equation}
or
\begin{equation}\label{eqn:b}
  d_r^{\QQ_p} [p]'\tau^{2^ij}\underline{v}^K = a_p'\tau^{2^ij-1}v_0^r\underline{v}^K.
\end{equation}
Here $\underline{v}^K$ is a monomial in the $v_i$, $u'$ generates the
Teichm\"uller lift (unless $p=2$ when $u' = x = [2]$), $[p]' = [p]$
(unless $p=2$ when $[2]' = y = [5]$), and $a_p' = a_p$ (unless $p=2$
when $a_p'=\rho^2=xy$).  Recall that $d_r(x) \in \prod_v E_r^{*,\star}$ has one coordinate for
each place of $\QQ$.  The coordinates of $d_r(x) = y$ are either all of the form (\ref{eqn:a}) or all of the form (\ref{eqn:b}).  In case (\ref{eqn:a}) we define $\tilde{x} = \tau^{2^ij}\underline{v}^K$, a well-defined element of $E_2$.  In case (\ref{eqn:b}), we know that $y$ has at most finitely many nonzero coordinates since $H:k^M_2(\QQ)\to \bigoplus_{p\ge 2} k^M_2(\QQ_p)$ is an isomorphism and $y = H(\tilde{y})$.  In this case we define $\tilde{x}$ to be the sum of the elements $[p]'\tau^{2^ij}\underline{v}^K$ for which the associated coordinate of $y$ is nonzero.

As long as $\tilde{x}$ survives to $E_r$ we are
guaranteed that $d_r(\tilde{x}) = \tilde{y}$ by the inductive
hypothesis.  A consideration of tri-degrees quickly verifies that $\tilde{x}$ is
not a boundary in any of the $E_{r'<r}$-pages of the MASS.  Hence we
only need to show that $d_{r'}\tilde{x} = 0$ for each $r'<r$.  By the form
of $E_2$, we know that $\tilde{x} = s \tau^t v_i(j)$ for some Milnor
symbol $s$.  Given the structure of the local MASS $E_2$, we may assume
$\tilde{x}$ is in fact of the form $\sum [p]\tau^{2^i}$, some $p\equiv 1\pod{4}$,
or $v_i(j)$ for $0\le i\le n$, $0\le j$.

In the first case, all
potential targets for $\sum [p]\tau^{2^i}$ are either
$k^M_2(\QQ)\cdot v_0^{r'}$-multiples of $\tau^{2^i-1}$ or are
$\rho^3\cdot v_{>0}$-divisible.  (This is easy to see by looking at
Figure \ref{fig:R} and adding in $k^M_*(\QQ_p)$ classes.)  The differentials in the latter class
would be witnessed by the MASS over $\RR$ which collapses, a
contradiction.  For the first class, if $r'<r$ then the Hasse map will
send the differential to a local $d_{r'}$-differential on $x$, a
contradiction.

In the second case, all potential targets for $v_i(j)$ are either
$k^M_1(\QQ)\cdot v_0^{r'}v_i(0)$-multiples of $\tau^{2^{i+1}-1}$ or
are $\rho^3\cdot v_{>0}$-divisible.  The same arguments go through,
proving the injectivity of (\ref{eqn:MASSHasse}).
\end{proof}

\begin{thm}\label{thm:BPnQ}
Let $\varepsilon(p) = \nu_2(p-1)$, the 2-adic valuation of $p-1$, and
let $\lambda(p) = \nu_2(p^2-1)$.  The MASS for $\BPn{n}$ over $\QQ$ is determined by
\[
\begin{aligned}
  d_{\lambda(p)+i-1} [p]\tau^{2^i} &= a_p
  \tau^{2^i-1}v_0^{\lambda(p)+i-1}\text{ if }p\equiv 3\pod{4},~i\ge 1,\\
  d_{\varepsilon(p)+i} [p]\tau^{2^i} &= a_p
  \tau^{2^i-1}v_0^{\varepsilon(p)+i}\text{ if }p\equiv 1\pod{4},~i\ge 0,\\
  d_{3+i+\nu_2(j)} v_i(j) &= [2] \tau^{2^{i+1}j-1}v_0^{3+i+\nu_2(j)}v_i(0)\text{ if
  }i\ge 0,j\ge 1.
\end{aligned}
\]
\end{thm}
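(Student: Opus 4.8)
The plan is to obtain every differential in the MASS over $\QQ$ from the local computations of Section \ref{sec:comp} by transporting them along the Hasse map. The essential input is the fact, established in the course of proving Theorem \ref{thm:Hasse}, that the Hasse map induces a \emph{monomorphism} $E_r^{*,\star}(\QQ)\hookrightarrow\prod_v E_r^{*,\star}(\QQ_v)$ on every page and that it commutes with the differentials. It then follows formally that for $x\in E_r^{*,\star}(\QQ)$ one has $d_r(x)=0$ if and only if each component $d_r(x|_{\QQ_v})$ vanishes, and that when $d_r(x)\neq 0$ its target is the unique class of $E_r^{*,\star}(\QQ)$ whose Hasse image is $(d_r(x|_{\QQ_v}))_v$. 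Thus, knowing (i) the $E_2$-page $\Ext_{\mathcal{E}(n)}(\M_\star,\M_\star)$ over $\QQ$, supplied by Theorem \ref{thm:rhoBSSQ}; (ii) the MASSs over $\RR$ (which collapse), over $\QQ_p$ with $p>2$ (Theorem \ref{thm:BPnQp}), and over $\QQ_2$ (Section \ref{sec:comp}); and (iii) the behaviour of the Milnor symbols $[p]$, $a_p$, $\rho$ under the restrictions $k^M_*(\QQ)\to k^M_*(\QQ_v)$, supplied by Propositions \ref{prop:kMQ} and \ref{prop:rhoModule}, completely pins down the MASS over $\QQ$. We record in particular that $H$ is injective on $k^M_m(\QQ)$ for every $m$ — for $m\le 2$ by Proposition \ref{prop:kMQ}, and for $m\ge 3$ because $k^M_m(\QQ)=\ZZ/2\{\rho^m\}$ maps nontrivially to $k^M_m(\RR)$ — so that local targets lift uniquely.

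I would first dispatch the $v_i(j)$-family, which is detected at $\QQ_2$. Each $v_i(j)$ restricts to the class of $\tau^{2^{i+1}j}v_i$ over every completion; writing $2^{i+1}j=2^m j'$ with $j'$ odd and $m=i+1+\nu_2(j)$, and applying the $\QQ_2$-differential on $\tau^{2^m}$ to $(\tau^{2^m})^{j'}v_i$ (Leibniz rule, $v_i$ a permanent cycle, $j'$ odd), one finds that $v_i(j)$ supports a differential over $\QQ_2$ exactly on page $2+m=3+i+\nu_2(j)$, with target the restriction of $[2]\tau^{2^{i+1}j-1}v_0^{3+i+\nu_2(j)}v_i(0)$. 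Over $\QQ_\ell$, $\ell>2$, the same computation produces a differential on $v_i(j)$ on page $\varepsilon(\ell)+m$ or $\lambda(\ell)-1+m$ carrying $v_i(j)$ to a class proportional to the restriction of $[2]$; since $[2]$ has nonzero image in $k^M_1(\QQ_\ell)$ precisely when $\ell\equiv 3,5\pmod 8$, which are exactly the cases $\varepsilon(\ell)=2$ or $\lambda(\ell)=3$, these local differentials occur on the same page $3+i+\nu_2(j)$ and hit the same restricted target, while over the remaining completions $v_i(j)$ is a permanent cycle through that page. Lifting along the injective Hasse map, and using local detection once more to see that $v_i(j)$ is neither a boundary nor supports a shorter differential over $\QQ$, gives $d_{3+i+\nu_2(j)}v_i(j)=[2]\tau^{2^{i+1}j-1}v_0^{3+i+\nu_2(j)}v_i(0)$.

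For the $[p]\tau^{2^i}$-families the detecting place is $\QQ_p$. Over $\RR$ the class $[p]\tau^{2^i}$ restricts to zero (a positive prime is a square), and over $\QQ_\ell$ with $\ell\neq p$ it restricts either to $0$ or, by the Leibniz rule together with $u_\ell^2=0$ in $k^M_*(\QQ_\ell)$, to a permanent cycle; so no local differential on it occurs away from $\QQ_p$. Over $\QQ_p$, $\tau^{2^i}$ survives the (essentially trivial) $\rho$-BSS for the indices in question, $[p]$ restricts to the class of the uniformizer, and $[p]\cdot u$ (for $p\equiv 1\pmod 4$) or $[p]\cdot\rho$ (for $p\equiv 3\pmod 4$) is the generator of the one-dimensional group $k^M_2(\QQ_p)$; Theorem \ref{thm:BPnQp} and the Leibniz rule then show that $[p]\tau^{2^i}$ supports a differential over $\QQ_p$ on page $\varepsilon(p)+i$ (for $p\equiv 1\pmod 4$, $i\ge 0$) or $\lambda(p)+i-1$ (for $p\equiv 3\pmod 4$, $i\ge 1$; the case $i=0$ having already been absorbed into the $\rho$-BSS), with target the restriction of $a_p\tau^{2^i-1}v_0^{\varepsilon(p)+i}$, respectively $a_p\tau^{2^i-1}v_0^{\lambda(p)+i-1}$. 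Because $H$ is injective on $k^M_2(\QQ)$ — and because $\rho^2$, unlike $a_p$, has nonzero image over $\RR$ — the unique global lift is the one named by $a_p$ rather than by $\rho^2+a_p$; since local detection again rules out any shorter global differential on, or earlier boundary through, $[p]\tau^{2^i}$, this is the stated differential.

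Finally, these account for all differentials: by injectivity of $H$, any nonzero $d_r$ over $\QQ$ has nonzero image in some $E_r^{*,\star}(\QQ_v)$, and each local MASS is exhausted by the differentials of Section \ref{sec:comp} together with their $v_0$-multiples and their Leibniz consequences against permanent cycles, all of which lift to consequences of the three families above. I expect the principal difficulty to be the page bookkeeping: the page of a given differential is governed by $\varepsilon(p)=\nu_2(p-1)$, $\lambda(p)=\nu_2(p^2-1)$, and the identity $2+m=3+i+\nu_2(j)$ over $\QQ_2$, and one must carefully track the ``doubling'' whereby $\tau^{2^k}$ supports its first differential only $k$ pages later than $\tau$. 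A related subtlety is that the Leibniz rule is unavailable over $\QQ$ itself — for instance $[p]\tau$ lies in $E_2(\QQ)$ while $\tau$ does not — so the precise identification of each target (in particular $a_p$ rather than $\rho^2+a_p$) must be extracted from the Hasse map rather than computed directly.
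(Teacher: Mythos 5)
Your proposal is correct and is essentially the paper's argument: the paper packages the page-by-page injectivity of the Hasse map established in proving Theorem \ref{thm:Hasse} into a ``least energy principle'' --- the global differential on a class occurs on the first page on which some local restriction supports one, with target the unique Hasse lift --- and then reads the three families of differentials off the local computations of Section \ref{sec:comp}, exactly as you do. One small slip: for the $[p]\tau^{2^i}$ family, your claim that no local differential occurs away from $\QQ_p$ overlooks the place $v=2$, where $[p]|_{\QQ_2}\cdot[2]\neq 0$ in $k^M_2(\QQ_2)$ whenever $p\equiv 3,5\pmod{8}$, so $[p]\tau^{2^i}$ does support a nonzero $2$-adic differential; since $\lambda(p)-1+i$ (resp.\ $\varepsilon(p)+i$) equals $2+i$ in exactly those cases, this happens on the same page as the $\QQ_p$-differential and so does not change the page count, but it does affect the identification of the target and is precisely the source of the $a_p$ versus $\rho^2+a_p$ ambiguity you flag at the end.
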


Before proving the theorem we make several comments.

\begin{rmk}\label{rmk:graph2}
The behavior of the above spectral sequence (and that of Theorem
\ref{thm:rhoBSSQ}) is depicted in the Figures \ref{fig:A},
\ref{fig:B}, and \ref{fig:C} in the $n=3$ case.  They employ
the same graphical calculus employed in the pictures of the
$\rho$-Bockstein spectral sequence over $\RR$, and we strongly
recommend the reader review Remark \ref{rmk:graph1} before attempting
to digest these diagrams.  (Note that MASS pictures no longer have an
additional decoration by $\rho$-filtration, but the rest of the
grading is the same.)  We have split the
action of the spectral sequence into three digestible chunks:
differentials involving primes congruent to 3 mod 4 (Figure \ref{fig:A}), those for primes
congruent to 1 mod 4 (Figure \ref{fig:B}), and those present involving the prime 2 and the
real place (Figure \ref{fig:C}).

We have further compressed the diagrams by displaying all MASS
differentials simultaneously.  An arrow labeled by $r$ is a $d_r$
differential.

Finally, Figures \ref{fig:A} and \ref{fig:B} should really be
viewed as products of diagrams over primes congruent to 3 mod 4 and 1
mod 4, respectively, and $\lambda$ and $\varepsilon$ refer to
$\lambda(p) = \nu_2(p^2-1)$ and $\varepsilon(p) = \nu_2(p-1)$,
respectively.  The classes in degree $-\alpha$ represent $[p]$
and in degree $-2\alpha$ we have $a_p$.  The strings of elements
extending with slope $-1$ represent $\tau$-multiplication.  The
notation $\frac{\circ}{v_0^r}$ represents the algebra
$\FF_2[v_0,\ldots,v_n]/v_0^r$.
\end{rmk}

\begin{center}
\begin{figure}
\begin{tabular}{ccc}
\includegraphics[width=2.2in]{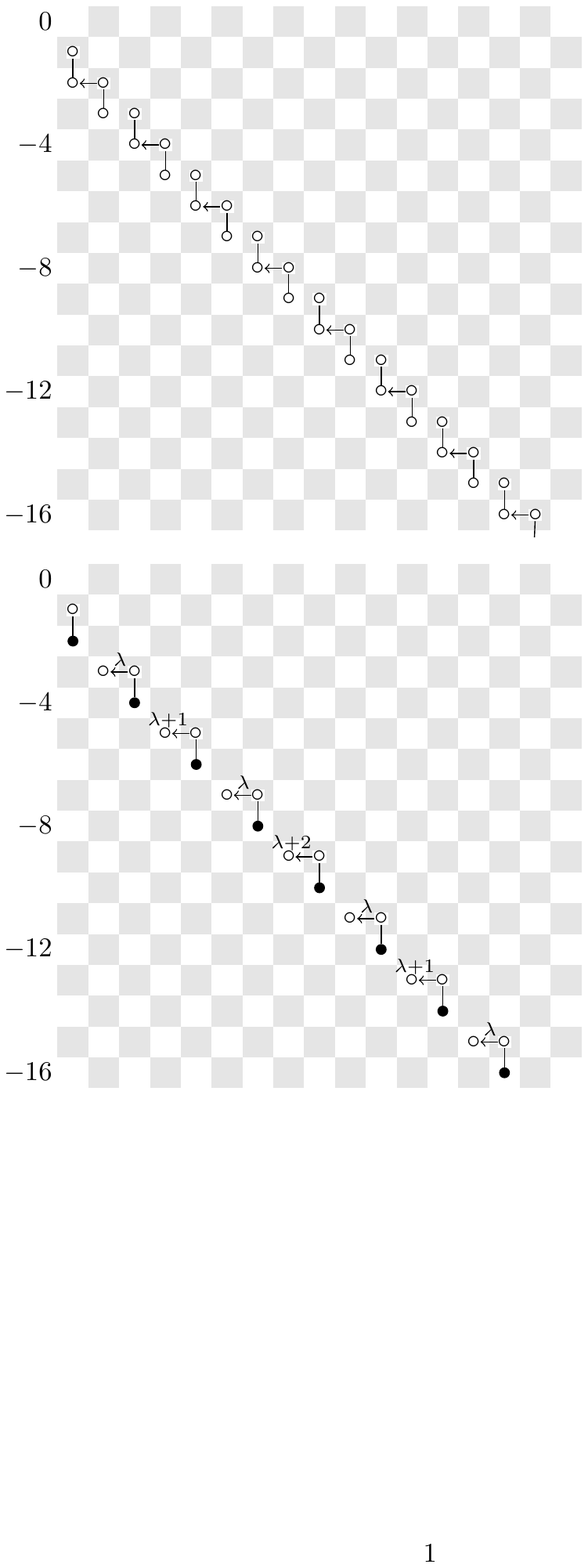} &~~&
\includegraphics[width=2.2in]{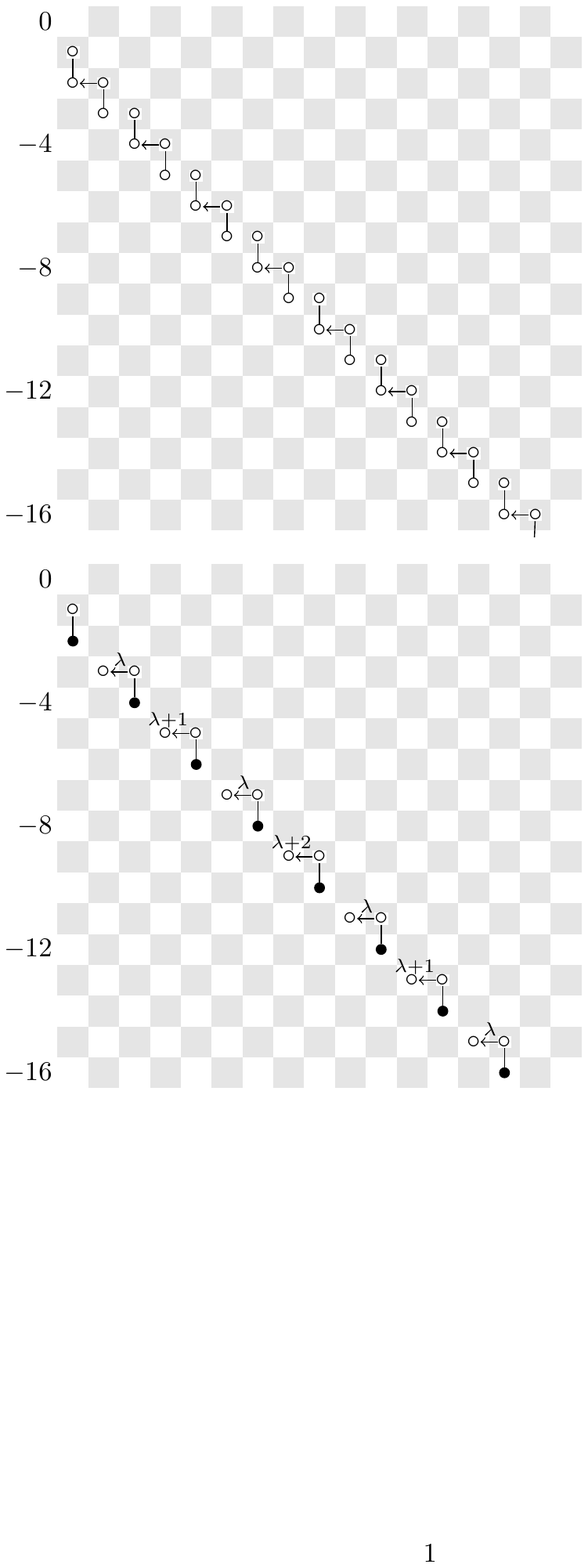}\\
$\rho$-BSS &~~& MASS
\end{tabular}

\includegraphics[width=2.2in]{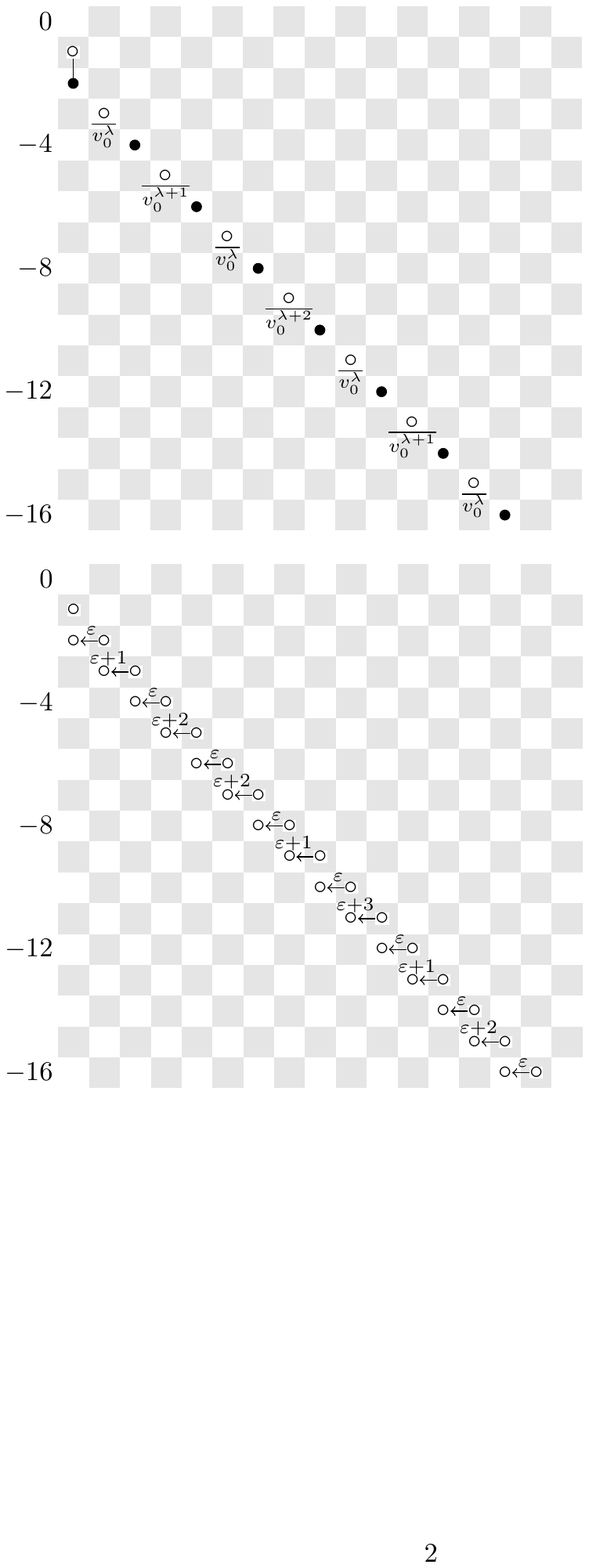}\\
$E_\infty$
\caption{Portions of the $\rho$-BSS and MASS for $\BPn{3}_\QQ$ involving
  $p\equiv 3\pod{4}$}\label{fig:A}
\end{figure}

\begin{figure}
\begin{tabular}{ccc}
\includegraphics[width=2.2in]{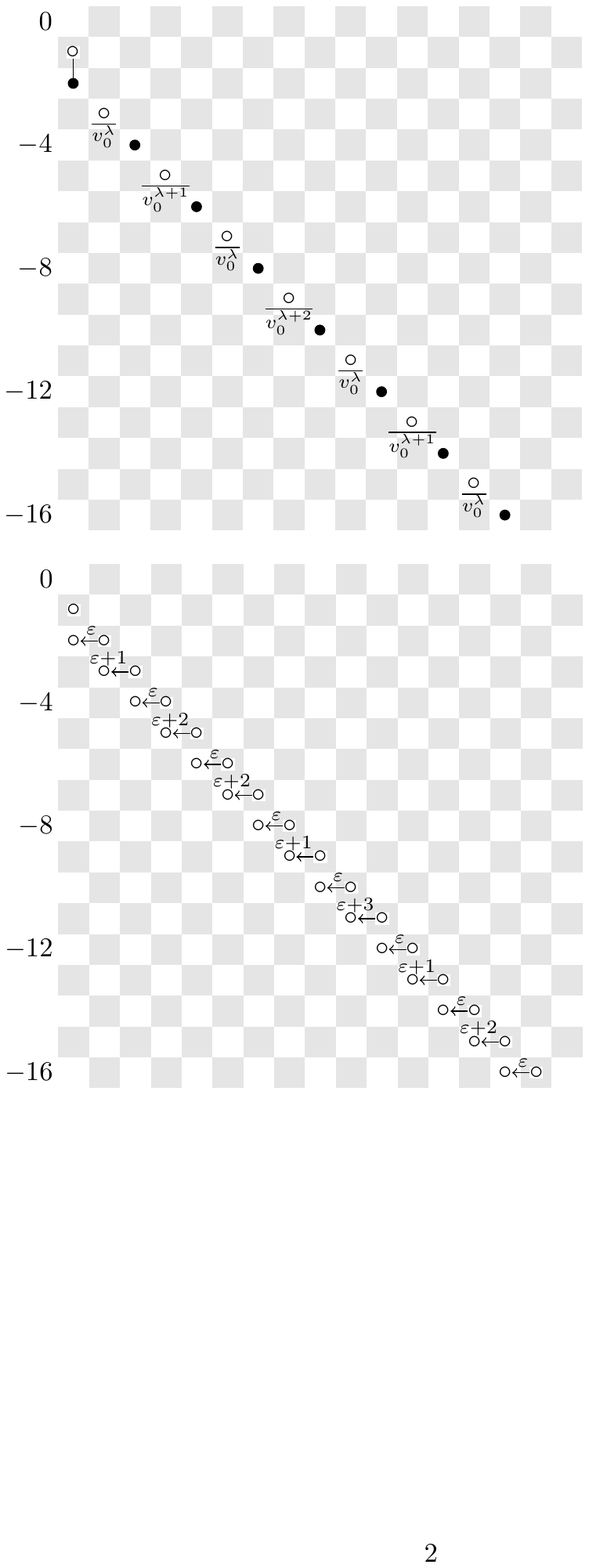} &~~&
\includegraphics[width=2.2in]{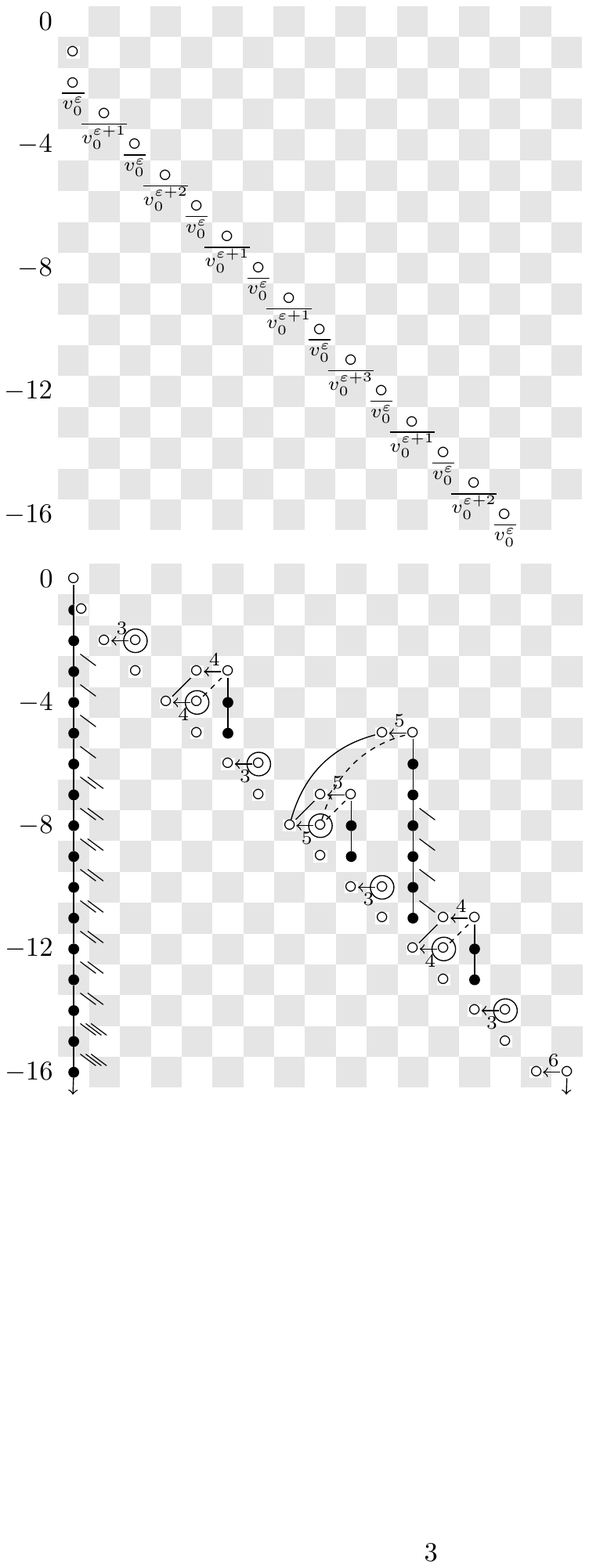}\\
MASS &~~& $E_\infty$
\end{tabular}
\caption{Portions of the MASS for $\BPn{3}_\QQ$ involving $p\equiv 1\pod{4}$.}\label{fig:B}
\end{figure}

\begin{figure}
\begin{tabular}{ccc}
\includegraphics[width=2.2in]{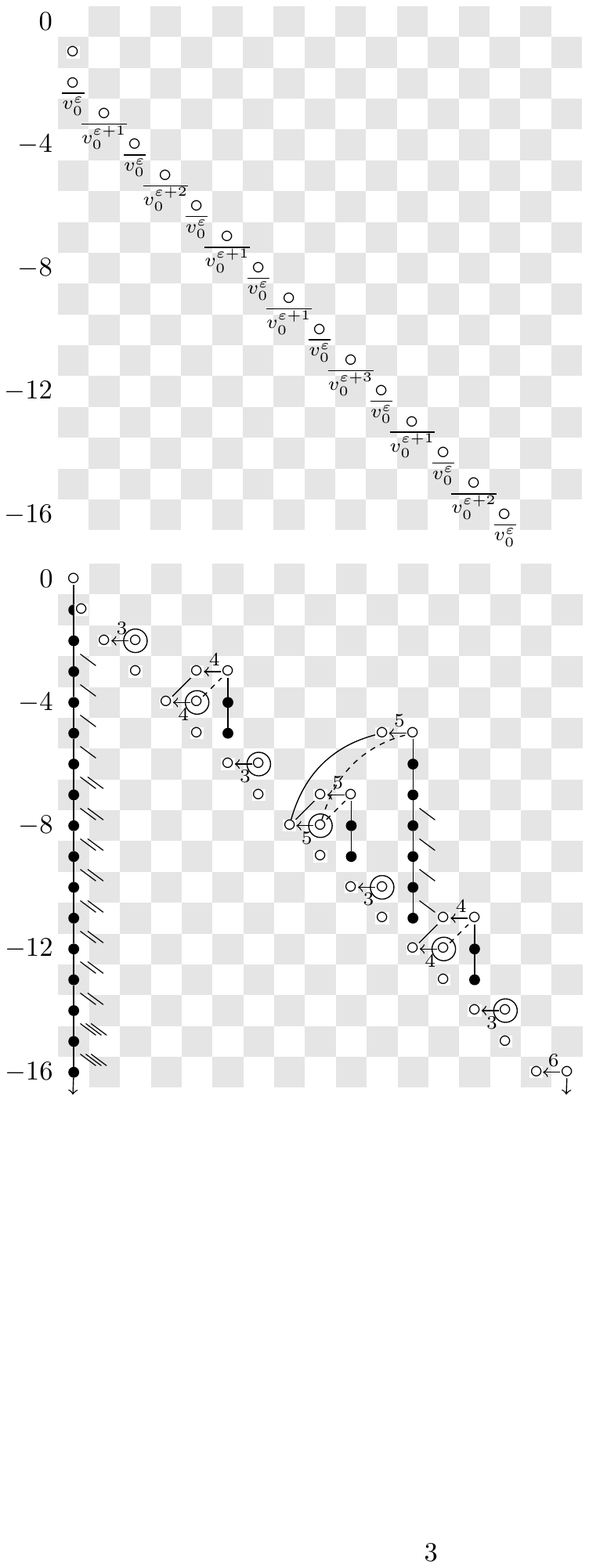} &~~&
\includegraphics[width=2.2in]{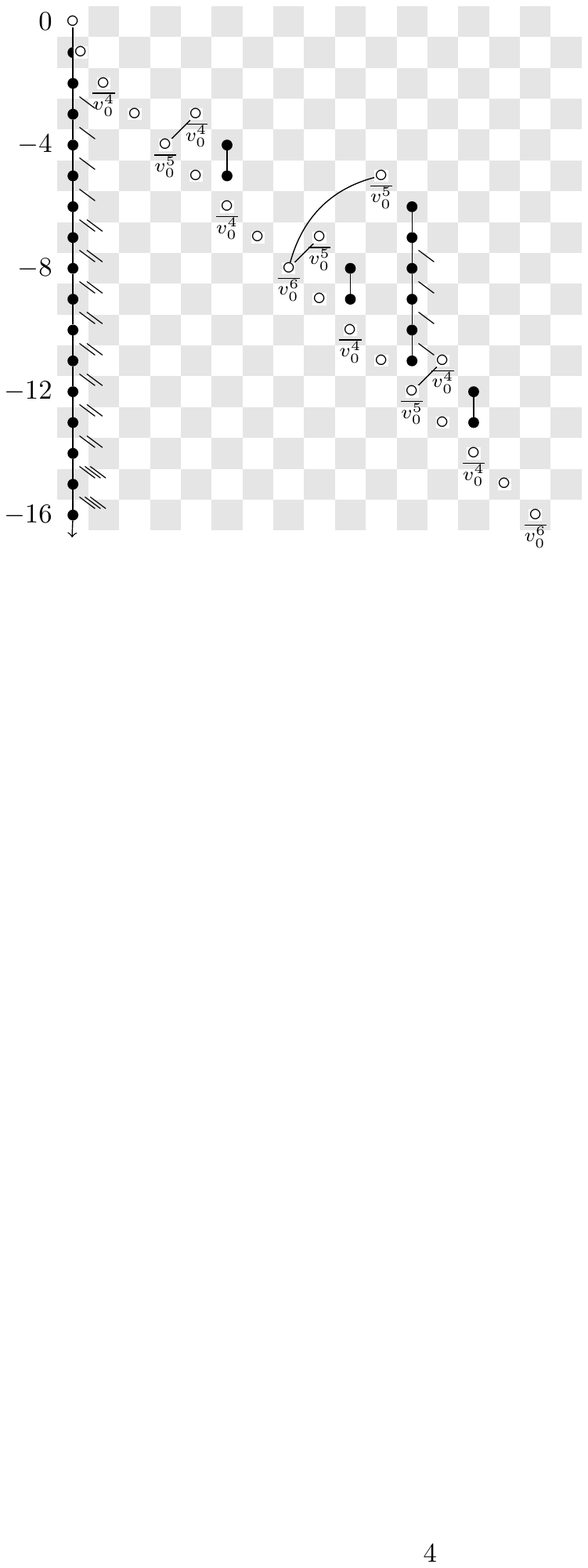}\\
MASS &~~&$E_\infty$
\end{tabular}
\caption{Portions of the MASS for $\BPn{3}_\QQ$ involving $2$-adic and
  real places.}\label{fig:C}
\end{figure}
\end{center}

\begin{rmk}
As the reader will note, the most interesting action in the
computation takes place when the real and 2-adic places intermingle.
Note that a $d_r$ differential on a $v_0(j)$ class produces
$v_0^{r+1}$-torsion in the target.  This happens because $v_0(j)$ is
in Adams filtration 1, not 0.

Consider, though, what happens to a class like $[2]\tau^3$ and its
$v_i(j)$-multiples.  (Note that $[2]\tau^3$ can
be located in degree $3-4\alpha$ in the MASS portion of Figure
\ref{fig:C}.)  We have a differential $d_4(v_0(2)) = [2]\tau^3v_0^5$,
making $[2]\tau^3$ a $v_0^5$-torsion class.  We also have $d_4(v_1(1))
= [2]\tau^3v_0^4v_1$, so $[2]\tau^3v_1$ is a $v_0^4$-torsion
class.  In this fashion, we see a $v_1$-multiple of a $v_0^5$-torsion
class which is killed by $v_0^4$.

The above scenrio is a specific case of a general phenomenon in our
computations:  differentials supported
by $v_{i>0}(j)$'s (those classes at the end of dashed lines in our
MASS picture) will produce $v_0^{3+i+\nu_2(j)}$-torsion on
$[2]\tau^{2^{i+1}j-1}v_i$ while $[2]\tau^{2^{i+1}j-1}$ is $v_0^{4+i+\nu_2(j)}$-torsion.
\end{rmk}

\begin{rmk}
In the dimensional range pictured, the $E_\infty$-pages of Figures
\ref{fig:A}, \ref{fig:B}, and \ref{fig:C} almost depict
$E_\infty$ for $\pi_\star \BP_\QQ$.  The only difference is that the
$v_0^6$-torsion class in degree $15-16\alpha$ becomes a
$v_0^7$-torsion class.  This is because the class $\tau^{16}$ dies in
the $\rho$-BSS for $\BPn{n}$ when $n\ge 4$ so it is $v_0(8)$ at
height 1 that supports a $d_6$-differential killing
$[2]\tau^{15}v_0^7$ while $[2]\tau^{15}v_0^6$ survives.
\end{rmk}

\begin{rmk}
The very careful reader will note that the target of $[p]\tau$ in the
$\rho$-BSS is $(\rho^2+a_p)v_0$ when $p\equiv 3\pod{4}$, so the
$\rho$-BSS portion of Figure \ref{fig:A} is slightly misleading.
Since $\rho v_0$ dies on the $E_2$-page, no harm is done.
\end{rmk}

\begin{proof}[Proof of Theorem \ref{thm:BPnQ}]
These differentials follow from a ``least energy principle''
guaranteed by the Hasse principle:

\begin{quotation}
For $x\in E_2$ of the global MASS for $\BPn{n}$ let $r$ be the
smallest $r'$ such that some $H^{\QQ_v}(x)$ supports a
$d_{r'}$-differential.  Then $d_r x = y$ for $y$ a unique lift of $d_r H_{\BPn{n}}(x)$.
\end{quotation}

We call this a least energy principle since the global MASS has
nonzero differential on $x$ as soon as its Hasse image supports
differentials, and it is a straightforward corollary of injectivity in
(\ref{eqn:MASSHasse}).

The differentials in the Theorem follow by applying this principle to
the global $E_2$-term determined in Theorem \ref{thm:rhoBSSQ} and the
local computations of Section \ref{sec:comp}.
\end{proof}

We now set some notation so that we can express a closed form for the additive structure of $\pi_\star \BPn{n}$.  For $p\equiv 3\pod{4}$ let $A_p$ be the bigraded Abelian group with
\[\begin{aligned}
  (A_p)_{-\alpha} &= \ZZ_2,\\
  (A_p)_{-2\alpha+2r(1-\alpha)} &= \ZZ/2\text{ for }r\ge 0,\\
  (A_p)_{1-3\alpha+2r(1-\alpha)} &= \ZZ/2^{\lambda(p)-1+\nu_2(2r+2)}\text{ for }r\ge 0,\\
  (A_p)_{k+\ell\alpha} &= 0\text{ otherwise}.
\end{aligned}\]
Define $A = \bigoplus_{p\equiv 3\pod{4}} A_p$ and let
\[
  A(n) = A[v_1,\ldots,v_n].
\]

For $p\equiv 1\pod{4}$ let $B_p$ be the bigraded Abelian group with
\[\begin{aligned}
  (B_p)_{-\alpha} &= \ZZ_2,\\
  (B_p)_{-2\alpha+r(1-\alpha)} &= \ZZ/2^{\varepsilon(p)+\nu_2(r+1)}\text{ for }r\ge 0,\\
  (B_p)_{k+\ell\alpha} &= 0\text{ otherwise}.
\end{aligned}\]
Define $B = \bigoplus_{p\equiv 1\pod{4}} B_p$ and let
\[
  B(n) = B[v_1,\ldots,v_n].
\]

Now define
\[
  C'(n) = \ZZ_2[\rho,\tau^{2^{n+1}},v_1,\ldots,v_n,w_i(j) \mid 1\le i\le n, 0\le j]
\]
subject to the relations
\[\begin{aligned}
  \rho v_i &= w_i(0), \\
  \rho^{2^{i+1}-1}v_i &= 0,\\
  \rho^{2^{i+1}-2}w_i(j) &= 0,\\
  w_i(j)w_k(\ell) &= w_i(j+2^{k-i}\ell) w_k(0)\text{ if }i\le k,\\
  w_i(j) &= \tau^{2^{n+1}}w_i(j-2^{n-i})\text{ if }j\ge 2^{n-1}.
\end{aligned}\]
(Note that the class $w_i(j)$ is represented by $\rho\tau^{2^{i+1}j}v_i$ in the
MASS for $\BPn{n}$.)  Also define $C''(n)$ to be the bigraded Abelian group with
\[\begin{aligned}
  C''(n)_{-\alpha+2r(1-\alpha)} &= \ZZ_2,\\
  C''(n)_{1-2\alpha+2r(1-\alpha)} &= \ZZ/2^{3+\nu_2(2r+2)}\text{ for }r\ge 0
\text{ and }2^{n+1}\nmid 2r+2,\\
  C''(n)_{1-2\alpha+2r(1-\alpha)} &= \ZZ/2^{2+\nu_2(2r+2)}\text{ for }r\ge 0\text{ and }2^{n+1}\mid 2r+2.\\
\end{aligned}\]
Let $t_j$ denote the generator of $C''(n)$ in degree $-\alpha+j(1-\alpha)$ and define
\[
  C'''(n) = C''(n)[v_1,\ldots,v_n]/(2^{2+\nu_2(j+1)}t_jv_i \mid j\equiv 3\pod{4}, 1\le i\le \nu_2(j+1)-1).
\]
We define
\[
  C(n) = C'(n)\oplus C'''(n).
\]

Note that $A(n)$, $B(n)$, and $C(n)$ capture precisely the information on the $E_\infty$ pages presented in Figures \ref{fig:A}, \ref{fig:B}, and \ref{fig:C}, respectively, once $v_0 = 2$ is taken into account.  The following result is now a direct consequence of Theorem \ref{thm:BPnQ}.

\begin{thm}\label{thm:BPnQHtpy}
The coefficients of $\BPn{n}$ over $\QQ$ take the form
\[
  \pi_\star \BPn{n} = A(n)\oplus B(n)\oplus C(n)
\]
additively.
\end{thm}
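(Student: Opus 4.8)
The plan is to read off the $E_\infty$-page of the MASS for $\BPn{n}$ over $\QQ$ determined by Theorem \ref{thm:BPnQ} and to exhibit an additive splitting into the three pieces described by the diagrams in Figures \ref{fig:A}, \ref{fig:B}, and \ref{fig:C}. Concretely, the $E_2$-page is $\Ext_{\mathcal{E}(n)}(\M_\star,\M_\star)$ computed in Theorem \ref{thm:rhoBSSQ}; as a module over $\FF_2[v_0,\dots,v_n]$ it decomposes according to which Milnor symbols appear, and the differentials in Theorem \ref{thm:BPnQ} act on these summands separately. Differentials of type $d_{\lambda(p)+i-1}[p]\tau^{2^i}$ only involve the $[p]$-string, $\tau$-powers, $a_p$, and $v_0^r$; differentials of type $d_{\varepsilon(p)+i}[p]\tau^{2^i}$ for $p\equiv 1 \pmod 4$ likewise; and differentials of type $d_{3+i+\nu_2(j)}v_i(j)$ involve the $v_i(j)$-classes and $[2]$-multiples. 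Since these three classes of differentials act on disjoint $\FF_2[v_0,\dots,v_n]$-submodules of $E_2$, the $E_\infty$-page splits as a direct sum of the corresponding subquotients.

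First I would make the submodule decomposition of $E_2$ precise: write $\Ext_{\mathcal{E}(n)}$ over $\QQ$ as the sum of the submodule generated by classes supported on $[p]$, $a_p$ (for $p\equiv 3\pmod 4$), together with $\tau$-powers; the analogous submodule for $p\equiv 1 \pmod 4$; and the submodule generated by the $v_i(j)$'s together with the $[2]$-divisible and $\rho$-divisible classes. This uses the structure of $k^M_*(\QQ)$ from Proposition \ref{prop:kMQ} and the $\rho$-module structure from Proposition \ref{prop:rhoModule}. Next I would compute the homology of each summand under the relevant differentials. For the $p\equiv 3\pmod 4$ part, the differential $d_{\lambda(p)+i-1}$ truncates the $v_0$-tower on $a_p\tau^{2^i-1}v_0^*$ at height $\lambda(p)+i-1$, leaving a $\ZZ/2^{\lambda(p)-1+\nu_2(2r+2)}$ in the appropriate degree once $v_0=2$; the surviving $[p]$ in degree $-\alpha$ contributes a $\ZZ_2$, and the $a_p$-classes not hit contribute $\ZZ/2$'s; tensoring with $\FF_2[v_1,\dots,v_n]$ gives $A(n)$. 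Similarly the $p\equiv 1 \pmod 4$ computation yields $B(n)$, and the $v_i(j)$/$[2]$-computation yields $C'(n) \oplus C'''(n) = C(n)$, where the relations in $C'(n)$ and $C'''(n)$ record exactly which $v_0$- and $v_i$-multiples survive and which are truncated. The solution of the multiplicative extension problems (so that $v_0$-towers of length $\ell$ give genuine $\ZZ/2^\ell$'s rather than extensions) is supplied by the Corollary following Theorem \ref{thm:Hasse}.

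The main obstacle is bookkeeping: one must check that every class on $E_2$ is accounted for in exactly one of the three pieces, that no differential crosses between pieces, and that the degree and order formulas in the definitions of $A_p$, $B_p$, $C''(n)$, $C'''(n)$ match the surviving subquotients degree-by-degree. The relation involving $2^{2+\nu_2(j+1)}t_j v_i$ in $C'''(n)$ is the subtlest point — it encodes the phenomenon described in the Remark that a $v_i$-multiple of a $v_0^{4+i+\nu_2(j)}$-torsion class can itself be $v_0^{3+i+\nu_2(j)}$-torsion — and verifying it requires carefully tracking which $v_i(j)$-supported differentials hit which $[2]\tau^*v_0^*v_i(0)$-multiples. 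Once these checks are complete, the identification $\pi_\star\BPn{n} = A(n)\oplus B(n)\oplus C(n)$ is immediate since, by the Corollary to Theorem \ref{thm:Hasse}, $E_\infty = \pi_\star\BPn{n}$ with no hidden additive or multiplicative extensions.
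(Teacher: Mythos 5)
Your proposal is correct and follows essentially the same route as the paper, which deduces the theorem directly from Theorem \ref{thm:BPnQ} by observing that $A(n)$, $B(n)$, and $C(n)$ record exactly the three disjoint pieces of the $E_\infty$-page (Figures \ref{fig:A}--\ref{fig:C}) once $v_0=2$ and the absence of hidden extensions (the corollary to Theorem \ref{thm:Hasse}) are taken into account. You simply spell out the bookkeeping that the paper leaves implicit.
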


\begin{rmk}\label{rmk:KQ}
By Lemma \ref{lem:kgl} in the $n=1$ case we can recover the
Rognes-Weibel \cite{RW} computation of the 2-complete algebraic $K$-theory of $\QQ$ by inverting $v_1$ and looking at the weight $0$ piece of the coefficients.
\end{rmk}

\begin{rmk}\label{rmk:loc}
Recall Quillen's localization fiber sequence for 2-complete algebraic
$K$-theory spectra
\[
  \bigvee_p K\FF_p \to K\ZZ \to K\QQ.
\]
The associated long exact sequence on
homotopy groups splits into isomorphisms $K_{2i}\ZZ \cong K_{2i}
\QQ$ and split short exact sequences
\[
  0\to K_{2i+1}\ZZ \to K_{2i+1}\QQ \to \bigoplus_p K_{2i}\FF_p\to 0
\]
for $i\ge 1$.  In the $n=1$ case, after inverting $v_1$ we see that
the decomposition in Theorem \ref{thm:BPnQHtpy} respects the
localization decomposition of $K_*\QQ$ in the following sense:
\[\begin{aligned}
  v_1^{-1}A(1)_m\oplus v_1^{-1}B(1)_m &\cong \bigoplus_p
  K_{m-1}\FF_p,\\
  v_1^{-1}C(1)_m &\cong K_m\ZZ
\end{aligned}\]
abstractly for $m\ge 2$.

Let $\EEE(n) = v_n^{-1}\BPn{n}$ denote the motivic Johnson-Wilson
spectrum.  (Note that $\EEE(1) = \KGL$ in the 2-complete category.)  We can ask then whether there are integral models of the
rational and $\FF_p$ truncated Brown-Peterson and Johnson-Wilson
spectra so that there are localization fiber sequences
\[
  \bigvee_p \BPn{n}_{\FF_p}^{\ZZ} \to \BPn{n}_\ZZ \to \BPn{n}_\QQ^\ZZ,
\]
\[
  \bigvee_p \EEE(n)_{\FF_p}^\ZZ \to \EEE(n)_\ZZ \to \EEE(n)_\QQ^\ZZ
\]
in the category of motivic spectra over $\Spec \ZZ$.  Joseph Ayoub
has informed us that these sorts of localization sequences should be
related to Quillen purity theorems for $\BPn{n}$ and $\EEE(n)$.  Such results
should follow from purity for algebraic $K$-theory in the $n=1$ case but $n>1$ is
\emph{terra incognita}.

On the basis of our MASS computations over $\QQ$, we might wildly speculate that
the MASS for $\BPn{n}_\ZZ$ would match portion of the MASS for
$\BPn{n}_\QQ$ presented in Figure \ref{fig:C} and the MASS for
$\BPn{n}_{\FF_p}$ would match the portion of the MASS for
$\BPn{n}_\QQ$ presented in Figure \ref{fig:A} or \ref{fig:B}
(depending on whether $p\equiv 3$ or $1\pod{4}$, respectively).  That said, essentially nothing is known about the structure of $\mathcal{A}_\star$ or convergence of the MASS over $\ZZ$.

By working in the stable
motivic homotopy category over $\FF_p$, the authors have currently verified that the MASS for
$\BPn{n}_{\FF_p}$, $p>2$, does indeed behave this way \cite{OOFp}.  (Work of Mark Hoyois, Sean Kelly, and the second author \cite{HoyKO} identifies the dual motivic Steenrod algebra at 2 over
$\FF_p$ for $p>2$, which then permits methods similar to those in Section \ref{sec:comp} of this
paper to be applied.)  The MASS for $\BPn{n}_{\FF_2}$ remains mysterious.
\end{rmk}

\bibliographystyle{plain} 
\bibliography{OO}
\vspace{0.1in}

\end{document}